\author{Liran Shaul}
\address{Universiteit Antwerpen, Departement Wiskunde-Informatica, Middelheim campus,
Middelheimlaan 1,
2020 Antwerp, Belgium}
\curraddr{Fakult\"at f\"ur Mathematik\\ 
Universit\"at Bielefeld\\ 
33501 Bielefeld\\ 
Germany.}
\email{LShaul@math.uni-bielefeld.de}
\newtheorem{thm}[equation]{Theorem}
\newtheorem{cor}[equation]{Corollary}
\newtheorem{prop}[equation]{Proposition}
\newtheorem{lem}[equation]{Lemma}
\theoremstyle{definition}
\newtheorem{dfn}[equation]{Definition}
\newtheorem{rem}[equation]{Remark}
\newtheorem{exa}[equation]{Example}
\newtheorem{notation}[equation]{Notation}
\newcommand{\iso}{\xrightarrow{\simeq}}
\newcommand{\opn}{\operatorname}
\newcommand{\cat}[1]{\operatorname{\mathsf{#1}}}
\newcommand{\mrm}[1]{\mathrm{#1}}
\newcommand{\mbb}[1]{\mathbb{#1}}
\newcommand{\til}[1]{\tilde{#1}}
\newcommand{\K}{\mbb{K} \hspace{0.05em}}
\newcommand{\thecat}{\opn{DGR}_{\opn{eftf}/\K}}
\newcommand{\heqv}{\mathrel{\overset{\makebox[0pt]{\mbox{\normalfont\tiny\sffamily h.e}}}{\iso}}}
\newcommand{\hequiv}{\heqv}
\newcommand{\eqfun}{\opn{EQ}}
\numberwithin{equation}{section}
\begin{document}

\title[The twisted inverse image pseudofunctor over DG rings]{The twisted inverse image pseudofunctor over commutative DG rings and perfect base change}

\thanks{The author acknowledges the support of the European Union for the ERC grant No 257004-HHNcdMir.}

\begin{abstract}
Let $\K$ be a Gorenstein noetherian ring of finite Krull dimension, and consider the category of cohomologically noetherian commutative differential graded rings $A$ over $\K$, such that $H^0(A)$ is essentially of finite type over $\K$, and $A$ has finite flat dimension over $\K$. We extend Grothendieck's twisted inverse image pseudofunctor to this category by generalizing the theory of rigid dualizing complexes to this setup.
We prove functoriality results with respect to cohomologically finite and cohomologically essentially smooth maps,
and prove a perfect base change result for $f^{!}$ in this setting. As application, we deduce a perfect derived base change result for the twisted inverse image of a map between ordinary commutative noetherian rings. Our results generalize and solve some recent conjectures of Yekutieli.
\end{abstract}

\maketitle

\setcounter{tocdepth}{1}
\tableofcontents

\setcounter{section}{-1}

\section{Introduction}

\subsection{Motivation: derived base change}

At the center of Grothendieck's coherent duality theory lies the twisted inverse image pseudofunctor $(-)^{!}$.
One of the most important features of $(-)^{!}$ is that it commutes with flat (and even tor-independent) base change.
In the affine noetherian situation that we work with in this paper, 
flat (or tor-independent) base change says the following:
given a diagram
\[
\xymatrixcolsep{4pc}
\xymatrix{
A\ar[r]^{f}\ar[d]^{g} & B\ar[d]^h\\
C \ar[r]^{f'} & B\otimes_A C
}
\]
of essentially finite type maps between noetherian rings,
such that $g$ is flat (or such that $\opn{Tor}^A_i(B,C) = 0$ for all $i\ne 0$),
there is an isomorphism
\[
\mrm{L}h^*f^{!}(-) \cong (f')^{!}\mrm{L}g^{*}(-).
\]
of functors $\cat{D}^{+}_{\mrm{f}}(A) \to \cat{D}^{+}_{\mrm{f}}(B\otimes_A C)$.
It is then natural to ask if one may generalize this result
with respect to more general maps $g$. 
Unfortunately, within the usual framework of noetherian rings and their derived categories,
such a question is meaningless. 
The problem is that if $\opn{Tor}^A_i(B,C) \ne 0$ for some $i \ne 0$,
the formula $M \mapsto M\otimes^{\mrm{L}}_A C$ does not define
a functor $\cat{D}(B) \to \cat{D}(B\otimes_A C)$.

The reason for the failure is that when $\opn{Tor}^A_i(B,C) \ne 0$,
the ring $B\otimes_A C$ does not represent the correct homological tensor product between $B$ and $C$ over $A$.
Instead, one needs to replace it with $B\otimes^{\mrm{L}}_A C$ which is no longer a commutative ring.
There are various ways to represent the object $B\otimes^{\mrm{L}}_A C$ (for instance, using simplicial commutative rings).
In this paper we will represent it using commutative differential graded rings (also known as graded-commutative DG-algebras).
Thus, we may resolve the map $g:A \to C$ as $A \xrightarrow{\til{g}} \til{C} \iso C$,
where $\til{g}$ is a flat DG-ring map, and the map $\til{C} \xrightarrow{c} C$ is a quasi-isomorphism.
We now obtain a new commutative diagram
\[
\xymatrixcolsep{4pc}
\xymatrix{
A\ar[r]^{f}\ar[d]^{\til{g}} & B\ar[d]^{\til{h}}\\
\til{C} \ar[r]^{\til{f}'} & B\otimes_A \til{C}
}
\]
in which $\til{g}$ and $\til{h}$ are flat,
so we may consider the functor
\[
\mrm{L}{\til{h}}^*f^{!}(-):\cat{D}^{+}_{\mrm{f}}(A) \to \cat{D}(B\otimes_A \til{C}),
\]
and ask whether it is naturally isomorphic to
\[
(\til{f}')^{!}\mrm{L}\til{g}^{*}(-).
\]
Of course, for this to make sense, 
one needs to define $(\til{f}')^{!}$. 
As $\til{C}$ and $B\otimes_A \til{C}$ are both DG-rings, to have such a result, 
it becomes necessary to extend the theory of the twisted inverse image pseudofunctor to such context.

\subsection{Rigid dualizing complexes}

There are various approaches in the literature concerning how to construct the twisted inverse image pseudofunctor.
Grothendieck's original strategy, explained by Hartshorne in \cite{RD}, 
used dualizing and residue complexes. 
Deligne (\cite[Appendix]{RD}) proved the existence of $f^{!}$ directly as a right adjoint of $\mrm{R}f_*$.
The most general results\footnote{See however the recent paper \cite{Ne2}, where duality theory is generalized to the unbounded derived category, at least over noetherian schemes} in the area are due to Lipman \cite{Li}
where such a pseudofunctor is constructed over non-noetherian schemes.
(And also, as far as we know, 
this is the first place where tor-independent base change was proved).
The approach taken in this paper is using rigid dualizing complexes.

First defined by Van den Bergh \cite{VdB} in a noncommutative situation,
the theory of rigid dualizing complexes,
while slightly less general then Grothendieck's approach,
has the advantage that it can be developed entirely inside the derived category.
In algebraic geometry, this theory was developed by
Yekutieli and Zhang \cite{Ye2,YZ1,YZ2,YZ3},
and by Avramov, Iyengar and Lipman \cite{AIL1,AIL2}, 
building on their work with Nayak \cite{AILN}.
From the point of view of this work,
it is interesting to note that while these papers developed
the theory of rigid dualizing complexes only over commutative rings,
both of them had to make a substantial use of DG-rings.
This is because DG-rings appear already in the definition of rigid dualizing complexes.
Underlying this definition, lies the following construction:
given a base commutative ring $\K$,
and a commutative noetherian $\K$-algebra $A$,
there is a functor
\[
\mrm{R}\opn{Hom}_{A\otimes^{\mrm{L}}_{\K} A}(A,-\otimes^{\mrm{L}}_{\K} -) :\cat{D}(A)\times \cat{D}(A) \to \cat{D}(A)
\]
which we refer to as derived Hochschild cohomology.
To define it, one replaces $\K \to A$ by a flat
DG-ring resolution $\K \to \til{A} \iso A$,
and then define
\[
\mrm{R}\opn{Hom}_{A\otimes^{\mrm{L}}_{\K} A}(A,-\otimes^{\mrm{L}}_{\K} -) := \mrm{R}\opn{Hom}_{\til{A}\otimes_{\K} \til{A}}(A,-\otimes^{\mrm{L}}_{\K} -).
\]
It can be shown (\cite[Theorem 3.2]{AILN}, \cite[Theorem 6.15]{Ye2}) that this construction is independent of the chosen resolution.
Then, one defines a rigid dualizing complex over $A$ relative to $\K$, 
to be a dualizing complex $R$ over $A$ which is of finite flat dimension over $\K$, 
together with an isomorphism
\[
R \iso \mrm{R}\opn{Hom}_{A\otimes^{\mrm{L}}_{\K} A}(A,R\otimes^{\mrm{L}}_{\K} R)
\]
in $\cat{D}(A)$.
\newline
Under suitable assumptions on $\K, A$, 
one can show that there exists a unique rigid dualizing complex $R_A$
over $A$ relative to $\K$.
Then, given an essentially finite type map $f:A \to B$,
one defines 
\[
f^{!}(-) := \mrm{R}\opn{Hom}_B(B\otimes^{\mrm{L}}_A \mrm{R}\opn{Hom}_A(-,R_A),R_B).
\]

\subsection{Main results}

Let us now describe the contents of this paper.
In section 1 we recall some basic definitions
concerning DG-rings, their derived categories,
and dualizing DG-modules over them. 
We also define the notion of a rigid dualizing DG-module over a commutative DG-ring,
and describe the category of DG-rings that we will work with in this paper, $\thecat$,
which consists of DG-rings which are essentially of finite type (in a cohomological sense)
and of finite flat dimension over a base noetherian ring $\K$.

Section 2 is of technical nature, 
and in it we describe a framework which allows one to
transfer homological results between derived categories of quasi-isomorphic DG-rings.
We view the results of this section as a technical necessity, forced on us due to the lack of a model structure on the category of commutative DG-rings.

In section 3 we study functoriality of rigid dualizing DG-modules with respect to cohomologically finite maps.
Such maps generalize finite ring maps. 
The main result of this section, 
repeated as Corollary \ref{cor:finite} below, is the following result.
\begin{thm}\label{thm04}
Let $\K$ be a Gorenstein noetherian ring of finite Krull dimension.
\begin{enumerate}
\item For any $A \in \thecat$, there exists a rigid dualizing DG-module over $A$ relative to $\K$.
\item Given a cohomologically finite map $f:A \to B$ in $\thecat$, 
and given a rigid dualizing DG-module $R_A$ over $A$ relative to $\K$, 
\[
R_B := \mrm{R}\opn{Hom}_A(B,R_A)
\]
is a rigid dualizing DG-module over $B$ relative to $\K$.
\end{enumerate} 
\end{thm}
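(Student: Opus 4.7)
The plan is to prove (2) first --- showing that $\mrm{R}\opn{Hom}_A(B,R_A)$ inherits the dualizing, finite-flat-dimension and rigidity properties --- and then deduce (1) from it by exhibiting, for each $A\in\thecat$, a cohomologically finite map into $A$ from an ordinary essentially smooth $\K$-algebra for which the classical Yekutieli--Zhang theory already supplies a rigid dualizing complex.

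For (2), set $R_B := \mrm{R}\opn{Hom}_A(B,R_A)$. That $R_B$ is a dualizing DG-module over $B$ is the DG analogue of Grothendieck's classical fact that $\mrm{R}\opn{Hom}_A(B,-)$ preserves dualizing complexes along a finite map; using the transfer results of Section 2 to pass to a strict model in which $B$ is cohomologically finite over $A$, the usual proof --- finite injective dimension, bounded cohomology, finite generation over $H^0(B)$, and biduality --- carries over. That $R_B$ has finite flat dimension over $\K$ then follows from a standard spectral sequence, starting from a bounded $\K$-flat model of $R_A$ and a bounded $A$-semifree model of $B$ with cohomology concentrated in finitely many degrees.

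The main obstacle is the rigidity of $R_B$. Here I would apply $\mrm{R}\opn{Hom}_A(B,-)$ to the rigidity isomorphism
\[
R_A \iso \mrm{R}\opn{Hom}_{A\otimes^{\mrm{L}}_\K A}(A,R_A\otimes^{\mrm{L}}_\K R_A)
\]
and then identify the resulting object with
\[
\mrm{R}\opn{Hom}_{B\otimes^{\mrm{L}}_\K B}(B,R_B\otimes^{\mrm{L}}_\K R_B).
\]
The route I would take is to fix compatible flat DG-ring resolutions $\til{A}\iso A$ and $\til{A}\to\til{B}\iso B$ over $\K$ with $\til{A}\to\til{B}$ a flat DG-ring map, so that each Hochschild cohomology in sight is represented by an honest $\mrm{Hom}$ over a genuine enveloping DG-ring. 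One then verifies the DG extension of the squaring/base-change operation of Avramov--Iyengar--Lipman--Nayak at the level of $\til{A}$ and $\til{B}$, and transports the resulting rigidity isomorphism back to $B$ via Section 2.

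For (1), since $H^0(A)$ is essentially of finite type over the Gorenstein base $\K$, choose an essentially smooth (hence $\K$-flat, hence in $\thecat$) commutative $\K$-algebra $C$ together with a finite surjection $C\surj H^0(A)$. After replacing $A$ by a suitable quasi-isomorphic DG-ring via Section 2, this surjection lifts to a DG-ring map $g\colon C\to A$; cohomological boundedness of $A$ and finite generation of $H^*(A)$ over $H^0(A)$ make $g$ cohomologically finite. The classical Yekutieli--Zhang theorem supplies a rigid dualizing complex $R_C$ relative to $\K$, and part (2) applied to $g$ produces
\[
R_A := \mrm{R}\opn{Hom}_C(A,R_C),
\]
a rigid dualizing DG-module over $A$ relative to $\K$.
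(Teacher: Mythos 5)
Your overall strategy is the same as the paper's: prove (2) by transferring rigidity along cohomologically finite maps, then deduce (1) by producing a cohomologically finite map from an essentially smooth $\K$-algebra to (a model of) $A$ and importing the Yekutieli--Zhang/Avramov--Iyengar--Lipman rigid dualizing complex. The paper's choice of source in (1) is $\til{A}^0 = S^{-1}\K[x_1,\dots,x_n]$ produced by Proposition \ref{prop:res}, for which the map $\til{A}^0\to\til{A}$ is built in, so the lifting you invoke is automatic; your version of (1) needs that extra argument but is essentially the same idea.

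However, there are two genuine gaps in your proof of (2). First, the reduction of the Hochschild cohomology over $B\otimes^{\mrm{L}}_\K B$ to the one over $A\otimes^{\mrm{L}}_\K A$ requires a splitting isomorphism of the shape
\[
\mrm{R}\opn{Hom}_{\til{A}}(\til{B},R)\otimes^{\mrm{L}}_\K \mrm{R}\opn{Hom}_{\til{A}}(\til{B},R)\cong
\mrm{R}\opn{Hom}_{\til{A}\otimes_\K\til{A}}(\til{B}\otimes_\K\til{B},R\otimes^{\mrm{L}}_\K R),
\]
and to obtain it one usually replaces $\til{B}$ by a pseudo-finite semi-free resolution over $\til{A}$. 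The paper explicitly flags it as an open problem whether a cohomologically finite DG-ring map $A\to C$ with $C\neq C^0$ admits a pseudo-finite semi-free multiplicative resolution, and engineers Lemma \ref{lem:tensor-split-variation} (using that $\K$ is coherent, so infinite products of flat $\K$-modules are flat, which makes $\opn{Hom}_A(C,M'')$ a bounded-below flat complex even for a non-pseudo-finite semi-free $C$) precisely to sidestep this. Choosing ``$\til{A}\to\til{B}$ a flat DG-ring map'' as you suggest is not enough to identify the relevant derived Homs, so ``verifies the DG extension of the squaring operation'' is where the real content lies, and as written it would not go through.

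Second, the finite flat dimension of $R_B=\mrm{R}\opn{Hom}_A(B,R_A)$ over $\K$ is not a ``standard spectral sequence'' statement: you have no control on a $\K$-flat model of $R_A$ compatible with the $A$-structure, and $B$ need not have a bounded $A$-semifree model. The paper's route is cleaner and closes a potential circularity: one first observes that $R_B$ is a dualizing DG-module over $B$ (a statement independent of rigidity, from \cite[Proposition 7.5(1)]{Ye1}), and then Proposition \ref{prop:ffd} --- which is itself non-trivial, going through the relative dualizing complex of $f^0$ and \cite[Theorem 1.2(1)]{AILN} --- gives that any dualizing DG-module over an object of $\thecat$ automatically has finite flat dimension over $\K$. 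That finite flat dimension is then fed into the rigidity-transfer theorem (the paper's Theorem \ref{thm:finite}) as its hypothesis. You should replace the spectral-sequence claim with an appeal to that result.
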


In the short section 4 we discuss the box tensor product (that is, the external tensor product) of dualizing DG-modules.
We show in Theorem \ref{thm:tensor-of-dualizing} that:
\newpage
\begin{thm}
Let $\K$ be a Gorenstein noetherian ring of finite Krull dimension, 
let 
\[
A,B \in \thecat,
\] 
and let $C$ be a DG $\K$-ring that represents $A\otimes^{\mrm{L}}_{\K} B$.
Given a dualizing DG-module $R$ over $A$, and a dualizing DG-module $S$ over $B$, 
the DG-module $R\otimes^{\mrm{L}}_{\K} S$ is a dualizing DG-module over $C$.
\end{thm}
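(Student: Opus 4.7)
The plan is to verify that $M := R \otimes^{\mrm{L}}_{\K} S$, viewed in $\cat{D}(C)$, satisfies the three defining properties of a dualizing DG-module over $C$: (i) $H(M)$ is bounded and finitely generated over $H(C)$; (ii) $M$ has finite injective dimension over $C$; and (iii) the homothety morphism $C \to \mrm{R}\opn{Hom}_C(M,M)$ is an isomorphism in $\cat{D}(C)$. As a preliminary step, using the framework of section 2, I would pass to K-flat DG $\K$-ring resolutions $\til{A} \iso A$ and $\til{B} \iso B$, so that the ordinary tensor product $\til{A} \otimes_{\K} \til{B}$ is a concrete model for $A \otimes^{\mrm{L}}_{\K} B$, hence is quasi-isomorphic to $C$. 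Since the transfer results of section 2 show that being a dualizing DG-module is preserved under quasi-isomorphism of the base DG-ring, this reduces us to the situation in which $C = A \otimes_{\K} B$ and both $A$ and $B$ are K-flat over $\K$, so that derived tensor products over $\K$ may be computed strictly after K-flat resolution of one side.

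For (i), since $A, B \in \thecat$ they have finite flat dimension over $\K$, and since $R, S$ are cohomologically bounded with cohomology finitely generated over $A, B$ respectively, they have finite flat dimension over $\K$ as well; hence $R \otimes^{\mrm{L}}_{\K} S$ is cohomologically bounded. A Künneth-type hyperhomology spectral sequence with $E_2$-page built from $\opn{Tor}^{\K}_{*}(H(R), H(S))$ and converging to $H(R \otimes^{\mrm{L}}_{\K} S)$, combined with the noetherianity of $H^0(C)$ and the finite generation of $H(R)$, $H(S)$ over $H(A)$, $H(B)$, yields finite generation of $H(M)$ over $H(C)$. For (iii), the external Hom--tensor formula
\[
\mrm{R}\opn{Hom}_{C}\bigl(R \otimes^{\mrm{L}}_{\K} S,\; R \otimes^{\mrm{L}}_{\K} S\bigr) \;\cong\; \mrm{R}\opn{Hom}_A(R,R) \otimes^{\mrm{L}}_{\K} \mrm{R}\opn{Hom}_B(S,S),
\]
valid in view of the finite injective dimensions of $R$ and $S$ together with the cohomological boundedness and finite-generation hypotheses, combines with the reflexivities $\mrm{R}\opn{Hom}_A(R,R) \cong A$ and $\mrm{R}\opn{Hom}_B(S,S) \cong B$ to give $\mrm{R}\opn{Hom}_C(M,M) \cong A \otimes^{\mrm{L}}_{\K} B \cong C$, and one verifies that this chain of isomorphisms realizes the homothety.

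The main obstacle is (ii). Here I would use the characterization of finite injective dimension of $M$ over $C$ as a uniform upper bound on $H^i(\mrm{R}\opn{Hom}_C(N,M))$ as $N$ ranges over a suitable testing class (for instance $N \in \cat{D}^{-}_{\mrm{f}}(C)$, or residue fields at maximal ideals of $H^0(C)$). The essential input is the Gorenstein hypothesis on $\K$: $\K$ has finite injective dimension over itself, and this combined with the finite flat dimension of $A$ and $B$ over $\K$ supplies a uniform bound on $\mrm{R}\opn{Hom}_C(N, R \otimes^{\mrm{L}}_{\K} S)$ expressible in terms of the injective dimensions of $R, S$, the flat dimensions of $A, B$ over $\K$, and the injective dimension of $\K$. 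The argument parallels the analogous statement for ordinary commutative noetherian rings due to Yekutieli--Zhang and Avramov--Iyengar--Lipman, with the transfer to the DG setting handled by the machinery of section 2. This step is the most sensitive to the standing assumptions on $\K$ and will occupy the bulk of the proof.
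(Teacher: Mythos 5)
Your approach is genuinely different from the paper's, and while parts of it are viable, step (ii) as written is a gap rather than a proof.

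The paper does \emph{not} verify the three defining axioms of a dualizing DG-module directly for $R\otimes^{\mrm{L}}_{\K} S$. Instead, after reducing (as you also do) to $C=\til{A}\otimes_{\K}\til{B}$ with the specific resolutions of Proposition \ref{prop:res} (so that $\til{A}^0$, $\til{B}^0$ are localizations of polynomial rings over $\K$), it constructs a \emph{particular} dualizing DG-module over $C$: one starts from the relative dualizing complexes $R^0=(f^0)^!(\K)$, $S^0=(g^0)^!(\K)$ (which are merely shifts of $\til{A}^0$, $\til{B}^0$ because of the special form of these rings and the Gorenstein hypothesis on $\K$), observes that $R^0\otimes_{\K}S^0$ is dualizing over $\til{A}^0\otimes_{\K}\til{B}^0$, pushes forward along the cohomologically finite map $\til{A}^0\otimes_{\K}\til{B}^0\to\til{A}\otimes_{\K}\til{B}$, and identifies the result with $R'\otimes^{\mrm{L}}_{\K}S'$ via Lemma \ref{lem:tensor-split-variation}, where $R'$, $S'$ are the corresponding dualizing DG-modules over $\til{A}$, $\til{B}$. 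The general case then follows because any dualizing DG-module differs from $R'$ (resp.\ $S'$) by a tilting DG-module (\cite[Theorem 7.10]{Ye1}), and the external tensor of tilting DG-modules is tilting (Lemma \ref{lem:box-tensor-of-tilting}). This classification-by-tilting argument is what lets the paper sidestep any direct injective-dimension computation for $R\otimes^{\mrm{L}}_{\K}S$.

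Your plan for (i) and (iii) can be made to work: the external Hom--tensor isomorphism you invoke for (iii) is precisely what Lemma \ref{lem:tensor-split} establishes, and the flat-dimension hypotheses it needs are met because $\mrm{R}\opn{Hom}_B(S,S)\cong B$ has finite flat dimension over $\K$ (as $B\in\thecat$) and $R$ has finite flat dimension over $\K$ by Proposition \ref{prop:ffd}. But step (ii) as you have written it is a statement of intent, not an argument. You assert that Gorensteinness of $\K$, finite flat dimension of $A$, $B$ over $\K$, and finite injective dimension of $R$, $S$ "supply a uniform bound on $\mrm{R}\opn{Hom}_C(N,R\otimes^{\mrm{L}}_{\K}S)$," and that the argument "parallels" results of Yekutieli--Zhang and Avramov--Iyengar--Lipman. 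There is no actual chain of inequalities or reduction here, and the references you cite prove injective-dimension bounds for ordinary rings under hypotheses that do not transparently transfer to $\cat{D}(C)$ when $C$ is a DG-ring with nontrivial negative degrees; in particular, the usual strategy of testing against residue fields $H^0(C)/\mfrak{m}$ must be routed through the comparison between $\cat{D}(C)$ and $\cat{D}(\bar{C})$ (e.g.\ via \cite[Proposition 7.5]{Ye1} or Proposition \ref{prop:lurie-reduction}), and you would need to control the injective displacement after that reduction. The paper's choice to avoid (ii) entirely is not an accident: that step, done honestly, is at least as much work as the entire proof the paper gives. If you want to pursue your route, you should supply the reduction to $\bar{C}$ and the explicit bound; otherwise you should adopt the paper's strategy of reducing to an explicitly constructed dualizing DG-module and invoking the tilting classification.
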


In section 5 we make a more detailed study of the behavior of derived Hochschild cohomology for $A \in \thecat$.
In Theorem \ref{thm:reduction}, we generalize \cite[Theorem 4.1]{AILN}, the main result of that paper, 
to $\thecat$, and give a formula for computing derived Hochschild cohomology using homological operations over $A$, 
avoiding the passage to $A\otimes^{\mrm{L}}_{\K} A$. 
Using that result, we deduce in Corollary \ref{cor:group} the following, 
which guarantees and explains the uniqueness of rigid dualizing DG-modules
\begin{thm}
Let $\K$ be a Gorenstein noetherian ring of finite Krull dimension.
For any $A \in \thecat$, 
denote by $\mathcal{D}_A$ the set of isomorphism classes of dualizing DG-modules over $A$.
Then the operation 
\[
\mrm{R}\opn{Hom}_{A\otimes^{\mrm{L}}_{\K} A}(A,-\otimes^{\mrm{L}}_{\K}-)
\]
defines a group structure on $\mathcal{D}_A$, 
and any rigid dualizing DG-module $(R_A,\rho)$ is a unit of this group.
In particular, the rigid dualizing DG-module is unique up to isomorphism.
\end{thm}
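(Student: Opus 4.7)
The strategy is to use Theorem \ref{thm:reduction} to replace the operation
\[
R * S := \mrm{R}\opn{Hom}_{A\otimes^{\mrm{L}}_\K A}(A, R \otimes^{\mrm{L}}_\K S)
\]
by an expression involving only derived Hom and tensor internal to $\cat{D}(A)$, and then to verify the group axioms in this simplified setting. After fixing a rigid dualizing DG-module $(R_A, \rho)$ furnished by Theorem \ref{thm04}, and applying the derived Hochschild cohomology reduction formula of Theorem \ref{thm:reduction} with $R_A$ as the auxiliary dualizing object, I expect to obtain for any dualizing $R, S \in \cat{D}(A)$ a natural isomorphism
\[
R * S \simeq \mrm{R}\opn{Hom}_A\bigl(\mrm{R}\opn{Hom}_A(R, R_A),\, S\bigr).
\]
Since $\mrm{R}\opn{Hom}_A(R, R_A)$ is a tilting DG-module whenever $R$ is dualizing, and tensoring a dualizing DG-module by such a tilting DG-module again yields a dualizing DG-module, this already shows that $*$ descends to a well-defined binary operation on $\mathcal{D}_A$.

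Granted this formula, the group axioms follow formally. Specializing $R := R_A$ and using self-duality $\mrm{R}\opn{Hom}_A(R_A, R_A) \simeq A$ gives $R_A * S \simeq S$, and symmetrically $S * R_A \simeq S$, so any rigid dualizing DG-module is a two-sided identity; note that the rigidity datum $\rho$ is precisely the unit equation $R_A \iso R_A * R_A$. Associativity is obtained by two applications of the reduction formula combined with standard derived Hom-tensor adjunctions. For inverses, I would invoke that any dualizing DG-module is isomorphic to $R_A \otimes^{\mrm{L}}_A L$ for some derived invertible $L \in \cat{D}(A)$; substituting into the reduction formula and simplifying converts $*$ into $\otimes^{\mrm{L}}_A$ on the tilting parameters, where inverses are evident. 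The uniqueness of the rigid dualizing DG-module then follows from the uniqueness of units in a group.

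The main obstacle is the first step: justifying that the reduction formula of Theorem \ref{thm:reduction} applies to pairs of dualizing DG-modules and yields exactly the displayed expression, while carefully tracking the flat DG-ring resolution implicit in $\mrm{R}\opn{Hom}_{A\otimes^{\mrm{L}}_\K A}(A, -)$. The transfer machinery of Section 2, developed to compensate for the lack of a model structure on commutative DG-rings, is likely indispensable here. A subsidiary technical point is establishing the tilting parameterization of dualizing DG-modules in the DG setting; in the classical case this follows from the fact that any two dualizing complexes over a connected noetherian ring differ by a shifted line bundle, and one expects an analogous statement to hold throughout $\thecat$.
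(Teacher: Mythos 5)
Your proposal follows essentially the same route as the paper's proof of Corollary \ref{cor:group}: verify via the tilting property that dualizing pairs satisfy the hypotheses of Theorem \ref{thm:reduction}, use the reduction formula to identify the derived Hochschild operation with the twisted tensor product, transport the group structure from $\opn{DPic}(A)$ under the bijection $R\mapsto\mrm{R}\opn{Hom}_A(R,R_A)$, and conclude uniqueness from uniqueness of units. One small caveat: you expect Theorem \ref{thm:reduction} to deliver $\mrm{R}\opn{Hom}_A(\mrm{R}\opn{Hom}_A(R,R_A),S)$, whereas it is actually stated in the symmetric form $\mrm{R}\opn{Hom}_A(\mrm{R}\opn{Hom}_A(R,R_A)\otimes^{\mrm{L}}_A\mrm{R}\opn{Hom}_A(S,R_A),R_A)$; the two agree by biduality and adjunction (as explained in the paragraph before the theorem), so the plan remains sound.
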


Section 6 discusses functoriality of rigid dualizing DG-modules with respect to cohomologically essentially smooth maps.
Such maps $A \to B$ generalizes the concept of an essentially smooth map between noetherian rings.
We associate to such a map a DG-module $\Omega_{B/A}$, 
which, when $A$ and $B$ are rings,
is given by a shift of a wedge product of the module of Kahler differentials $\Omega^1_{B/A}$.
Then, we show:
\begin{thm}\label{thm05}
Let $\K$ be a Gorenstein noetherian ring of finite Krull dimension, 
let $f:A \to B$ be a cohomologically essentially smooth map in $\thecat$,
and let $R_A$ be the rigid dualizing DG-module over $A$ relative to $\K$.
Then
\[
R_A \otimes^{\mrm{L}}_A \Omega_{B/A}
\]
has the structure of a rigid dualizing DG-module over $B$ relative to $\K$.
\end{thm}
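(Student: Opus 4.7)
The plan is to verify, in order, three things: (a) $R_B := R_A \otimes^{\mrm{L}}_A \Omega_{B/A}$ is a dualizing DG-module over $B$; (b) $R_B$ has finite flat dimension over $\K$; and (c) $R_B$ carries a rigidifying isomorphism
\[
R_B \iso \mrm{R}\opn{Hom}_{B \otimes^{\mrm{L}}_{\K} B}(B, R_B \otimes^{\mrm{L}}_{\K} R_B)
\]
in $\cat{D}(B)$. Together these give $R_B$ the structure of a rigid dualizing DG-module over $B$ relative to $\K$.

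For (a), the key point is that for a cohomologically essentially smooth map $f:A \to B$, the DG-module $\Omega_{B/A}$ is invertible over $B$ (the DG analogue of a shift of a line bundle), which in particular makes $(-) \otimes^{\mrm{L}}_A \Omega_{B/A} : \cat{D}(A) \to \cat{D}(B)$ preserve dualizing objects once composed with a flat base change. Concretely, the endomorphism criterion $B \iso \mrm{R}\opn{Hom}_B(R_B, R_B)$ reduces, using invertibility of $\Omega_{B/A}$, to $B \iso B \otimes^{\mrm{L}}_A \mrm{R}\opn{Hom}_A(R_A,R_A) \iso B \otimes^{\mrm{L}}_A A$, which holds by the dualizing property of $R_A$ together with flatness of $f$. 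Cohomological finiteness of $H^*(R_B)$ over $H^0(B)$ follows from that of $H^*(R_A)$ over $H^0(A)$ together with the essentially finite type assumption on $H^0(A) \to H^0(B)$, and finite injective dimension is preserved under tensoring with an invertible object. Step (b) is then straightforward: $f$ is of finite flat dimension, $\Omega_{B/A}$ has finite flat dimension over $B$, and $R_A$ has finite flat dimension over $\K$, so transitivity gives (b).

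The main obstacle is (c). The strategy is to apply $(-) \otimes^{\mrm{L}}_A \Omega_{B/A}$ to the rigidity isomorphism for $R_A$,
\[
R_A \iso \mrm{R}\opn{Hom}_{A \otimes^{\mrm{L}}_{\K} A}(A, R_A \otimes^{\mrm{L}}_{\K} R_A),
\]
and then propagate the tensor inside the derived Hom by base change along the natural map $A \otimes^{\mrm{L}}_{\K} A \to B \otimes^{\mrm{L}}_{\K} B$. The essential input is a DG version of the fundamental local isomorphism: for a cohomologically smooth map, the derived self-intersection $B \otimes^{\mrm{L}}_{B \otimes^{\mrm{L}}_A B} B$ is identified, up to appropriate shift, with $\Omega_{B/A}$, which accounts precisely for the ``smooth correction'' by $\Omega_{B/A}$ that appears on the left-hand side. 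Combining this with the standard base-change/adjunction isomorphism for $\mrm{R}\opn{Hom}$ should then produce the required rigidifying isomorphism on $R_B$.

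A delicate point throughout is that all of this must be executed literally inside $\thecat$, where no model structure on commutative DG-rings is available; at each step one must choose appropriate semi-free or flat resolutions and invoke the transfer machinery of Section~2 to compare derived categories of quasi-isomorphic DG-rings. Independence of the final isomorphism from these choices will follow, a posteriori, from the uniqueness assertion of Corollary~\ref{cor:group}.
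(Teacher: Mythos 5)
Your outline identifies the three things that need checking correctly, and your items (a) and (b) are indeed what the paper does: (a) is exactly Corollary~\ref{cor:dual-smooth} (though the paper's proof of that corollary is via Theorem~\ref{thm:gor-lifting} and Lurie's reduction criterion, not the endomorphism computation you sketch, which would also need the $\mrm{R}\opn{Hom}$/base-change compatibility of Proposition~\ref{prop:tensor-with-ffd} rather than mere ``flatness''), and (b) follows instantly from (a) plus Proposition~\ref{prop:ffd}. The divergence, and the problem, is in step~(c).

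The paper never constructs the rigidifying isomorphism on $R_A\otimes^{\mrm{L}}_A\Omega_{B/A}$ directly. Instead, by the time Theorem~\ref{thm:smooth} is proved, Corollary~\ref{cor:finite} already guarantees the \emph{existence} of a rigid dualizing DG-module $R_B$, and Corollary~\ref{cor:group} its \emph{uniqueness}. So the paper reduces the theorem to showing a non-rigid isomorphism $R_A\otimes^{\mrm{L}}_A\Omega_{B/A}\cong R_B$ of dualizing DG-modules, and then passes to the cohomology ring: by \cite[Corollary 7.12]{Ye1} (via the resolution $\til{B}$), dualizing DG-modules over $\til{B}$ are determined by their image under $\mrm{R}\opn{Hom}_{\til{B}}(\bar{B},-)$, which reduces everything to a computation over the ordinary ring $\bar{B}$ handled by Proposition~\ref{prop:pseudo-out-of-cat} and classical duality for the essentially smooth map $\bar{A}\to\bar{B}$. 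The uniqueness result is thus doing the heavy lifting, not serving merely as an a posteriori independence-of-choices check as you suggest at the end.

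Your route is essentially the Yekutieli--Zhang strategy from \cite[Section 5]{YZ1}: apply $(-)\otimes^{\mrm{L}}_A\Omega_{B/A}$ to $\rho_A$, push through the squaring operation along $A\otimes^{\mrm{L}}_{\K}A\to B\otimes^{\mrm{L}}_{\K}B$, and absorb the discrepancy using a rigidity structure on $\Omega_{B/A}$ over $B$ relative to $A$. There are two genuine gaps in how you've set this up. First, the formula you cite as the ``fundamental local isomorphism,'' namely that $B\otimes^{\mrm{L}}_{B\otimes^{\mrm{L}}_A B}B$ is $\Omega_{B/A}$ up to shift, is not correct: that tensor is Hochschild \emph{homology}, and for a smooth map it is (by HKR) the full exterior algebra $\bigoplus_i\Omega^i_{B/A}[i]$, not the top piece $\Omega^n_{B/A}[n]$. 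What is actually needed is the cohomological statement $\mrm{R}\opn{Hom}_{B\otimes_A B}(B,B\otimes_A B)\cong\Omega^{-1}_{B/A}$ (i.e., that the diagonal is a regular immersion with known relative dualizing module), packaged as a canonical rigidifying isomorphism on $\Omega_{B/A}$ over $B$ relative to $A$. Second, even with the correct statement, the DG analogue of this fundamental local isomorphism is nowhere established in the paper and is not a formal consequence of the results available to you; for a ``cohomologically essentially smooth'' map in the paper's sense (flat dimension $0$ plus $\bar{f}$ essentially smooth) one would need to develop the relevant cotangent-complex and regular-immersion machinery for DG-rings, a substantial additional project. The paper's indirect route via Corollaries~\ref{cor:finite}, \ref{cor:group} and \cite[Corollary 7.12]{Ye1} deliberately bypasses exactly this difficulty.
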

This is repeated as Theorem \ref{thm:smooth} below.
To prove this, we first prove that $R_A \otimes^{\mrm{L}}_A \Omega_{B/A}$ is a dualizing DG-module over $B$.
This is equivalent to showing that a cohomologically essentially smooth map is Gorenstein,
and we prove this fact in Corollary \ref{cor:dual-smooth}. Theorems \ref{thm04} and \ref{thm05} solve a generalization of \cite[Conjecture 9.8]{Ye1} in the case where all DG-rings have bounded cohomology.

Finally, in section 7, we arrive to the twisted inverse image pseudofunctor.
Denoting by $\mathbf{DerCat}_{\K}$ the 2-category of $\K$-linear triangulated categories,
the next result is extracted from Proposition \ref{prop:pseudofunctor}, Theorem \ref{theorem:finite}
and Theorem \ref{theorem:smooth}.

\begin{thm}
Let $\K$ be a Gorenstein noetherian ring of finite Krull dimension.
There exists a pseudofunctor
\[
(-)^{!} : \thecat \to \mathbf{DerCat}_{\K}
\]
with the following properties:
\begin{enumerate}
\item On the full subcategory of $\thecat$ made of essentially finite type $\K$-algebras 
which are of finite flat dimension over $\K$, $(-)^{!}$ is naturally isomorphic to the classical twisted inverse image pseudofunctor.
\item Given a cohomologically finite map $f:A \to B$ in $\thecat$,
there is an isomorphism
\[
f^{!}(M) \cong \mrm{R}\opn{Hom}_A(B,M)
\]
of functors $\cat{D}^{+}_{\mrm{f}}(A) \to \cat{D}^{+}_{\mrm{f}}(B)$.
\item Given a cohomologically essentially smooth map $f:A \to B$ in $\thecat$,
there is an isomorphism
\[
f^{!}(M) \cong M\otimes^{\mrm{L}}_A \Omega_{B/A}
\]
of functors $\cat{D}^{+}_{\mrm{f}}(A) \to \cat{D}^{+}_{\mrm{f}}(B)$.
\end{enumerate}
\end{thm}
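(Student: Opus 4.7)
The plan is to use the rigid dualizing DG-modules constructed in Theorem \ref{thm04}(1), whose uniqueness up to isomorphism is guaranteed by Corollary \ref{cor:group}. For each $A \in \thecat$, fix a rigid dualizing DG-module $R_A$ relative to $\K$, and for every map $f:A \to B$ in $\thecat$ define
\[
f^{!}(M) := \mrm{R}\opn{Hom}_B\bigl(B\otimes^{\mrm{L}}_A \mrm{R}\opn{Hom}_A(M, R_A),\, R_B\bigr),
\]
viewed as a functor $\cat{D}^{+}_{\mrm{f}}(A) \to \cat{D}^{+}_{\mrm{f}}(B)$. The dualizing property of $R_A, R_B$ ensures that the right-hand side lands in $\cat{D}^{+}_{\mrm{f}}(B)$.

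The pseudofunctor structure amounts to producing coherent isomorphisms $c_{f,g}:(g\circ f)^{!}\iso g^{!}\circ f^{!}$. Given $A\xrightarrow{f}B\xrightarrow{g}C$, I would expand $g^{!}(f^{!}(M))$, apply biduality over the dualizing DG-module $R_B$ (valid because $B\otimes^{\mrm{L}}_A \mrm{R}\opn{Hom}_A(M, R_A)$ lies in $\cat{D}^{+}_{\mrm{f}}(B)$) to collapse the inner double $\mrm{R}\opn{Hom}_B$, and then use associativity of derived tensor to identify $C\otimes^{\mrm{L}}_B B\otimes^{\mrm{L}}_A N$ with $C\otimes^{\mrm{L}}_A N$. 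The cocycle condition then reduces to naturality and coherence of biduality, both of which are established in the DG setting via the transfer machinery of section 2.

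For property (2), Theorem \ref{thm04}(2) allows me to replace $R_B$ with $\mrm{R}\opn{Hom}_A(B, R_A)$. Writing $N := \mrm{R}\opn{Hom}_A(M, R_A)$ and applying the derived tensor-Hom adjunction followed by biduality yields
\[
f^{!}(M) \simeq \mrm{R}\opn{Hom}_A(B\otimes^{\mrm{L}}_A N,\, R_A) \simeq \mrm{R}\opn{Hom}_A\bigl(B,\, \mrm{R}\opn{Hom}_A(N, R_A)\bigr) \simeq \mrm{R}\opn{Hom}_A(B, M).
\]
For property (3), Theorem \ref{thm05} lets me substitute $R_B \simeq R_A\otimes^{\mrm{L}}_A \Omega_{B/A}$. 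The invertibility of $\Omega_{B/A}$ over $B$ (established in section 6) pulls it outside the outer $\mrm{R}\opn{Hom}_B$, the cohomological flatness of $B$ over $A$ inherent in essential smoothness lets me interchange $B\otimes^{\mrm{L}}_A -$ with $\mrm{R}\opn{Hom}_A(-, R_A)$, and one more application of biduality produces $M\otimes^{\mrm{L}}_A \Omega_{B/A}$. Property (1) then follows because on ordinary essentially finite type $\K$-algebras of finite flat dimension the rigid dualizing DG-module recovers the classical rigid dualizing complex of Van den Bergh and Yekutieli-Zhang, and the displayed formula is already known in that setting to reproduce Grothendieck's twisted inverse image.

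The main obstacle will be the coherence of the pseudofunctor structure. Rigid dualizing DG-modules are unique only up to (a priori non-canonical) isomorphism, so even after fixing the family $\{R_A\}_{A \in \thecat}$ the comparison isomorphisms $c_{f,g}$ must be pinned down with enough rigidity to satisfy the associativity cocycle. I expect this to be handled by exploiting the canonical rigidifying data attached to $R_A$ (via the group structure on $\mathcal{D}_A$ coming from Corollary \ref{cor:group}), and by systematically realizing the tensor-Hom interchanges and biduality equivalences above through the resolution framework of section 2, so that all natural transformations in sight are forced to be compatible with quasi-isomorphisms of DG-ring resolutions. The verification in the smooth case is technically the most delicate, since one must simultaneously control a flat resolution of $f$ and the comparison of $\Omega_{B/A}$ with its classical analogue.
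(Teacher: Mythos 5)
Your proposal reproduces the paper's construction essentially verbatim: the definition $f^{!}(-) := D_B(B\otimes^{\mrm{L}}_A D_A(-))$ with rigid dualizing DG-modules, the comparison maps $\phi_{f,g}$ built from biduality and tensor-associativity, and the reduction of properties (2) and (3) to Corollary~\ref{cor:finite} and Theorem~\ref{thm:smooth} via Hom-tensor adjunction, biduality, and the evaluation isomorphism of Proposition~\ref{prop:tensor-with-ffd} all match the proofs of Proposition~\ref{prop:pseudofunctor}, Theorem~\ref{theorem:finite}, and Theorem~\ref{theorem:smooth}. One small clarification on your worry about coherence: once the family $\{R_A\}$ is fixed, the cocycle condition does not need the rigidifying isomorphisms $\rho$ at all --- the $\phi_{f,g}$ are assembled from canonical biduality and tensor-associativity isomorphisms whose coherence is standard (the paper simply cites \cite[Proposition 4.4]{YZ2}); the rigidity structure enters only to identify $R_B$ with $\mrm{R}\opn{Hom}_A(B,R_A)$, resp.\ with $R_A\otimes^{\mrm{L}}_A\Omega_{B/A}$, in the proofs of (2) and (3).
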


The final result of this paper fulfills the motivation of Section 0.1 by proving a derived base change result
with respect to perfect maps (i.e., maps of finite flat dimension).
Thus, in Theorem \ref{thm:base-change}, we show: 
\begin{thm}
Let $\K$ be a Gorenstein noetherian ring of finite Krull dimension,
let $f:A \to B$ be an arbitrary map in $\thecat$, 
and let $g:A \to C$ be a K-flat map in $\thecat$ 
such that $C$ has finite flat dimension over $A$.
Consider the induced base change commutative diagram
\[
\xymatrixcolsep{4pc}
\xymatrix{
A \ar[r]^f \ar[d]^g & B\ar[d]^{h}\\
C \ar[r]^{f'} & B\otimes_A C
}
\]
Then there is an isomorphism
\[
\mrm{L}h^* \circ f^{!}(-) \cong (f')^{!} \circ \mrm{L}g^{*}(-)
\]
of functors
\[
\cat{D}^{+}_{\mrm{f}}(A) \to \cat{D}^{+}_{\mrm{f}}(B\otimes_A C).
\]
\end{thm}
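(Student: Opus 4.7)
The plan is to reduce the base change statement to a compatibility of rigid dualizing DG-modules with perfect base change, and then to a formal manipulation using the defining formula
\[
f^{!}(M) = \mrm{R}\opn{Hom}_B\bigl(B \otimes^{\mrm{L}}_A \mrm{R}\opn{Hom}_A(M, R_A),\, R_B\bigr).
\]
Since $g$ is K-flat we have $\mrm{L}g^{*} = - \otimes_A C$ and $B \otimes_A C = B \otimes^{\mrm{L}}_A C$; the derived map $h$ inherits K-flatness by cobase change, so $\mrm{L}h^{*} = - \otimes_B (B \otimes_A C)$.

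The main step is to establish the following \emph{perfect base change of rigid dualizing DG-modules}: there are isomorphisms
\[
R_C \;\cong\; R_A \otimes^{\mrm{L}}_A C, \qquad R_{B \otimes_A C} \;\cong\; R_B \otimes^{\mrm{L}}_B (B \otimes_A C).
\]
For the first, finite flat dimension of $R_A$ over $\K$ and of $C$ over $A$ make $R_A \otimes^{\mrm{L}}_A C$ of finite flat dimension over $\K$ and bounded with finitely generated cohomology over $C$. Its dualizing property is verified via the change-of-rings adjunction $\mrm{R}\opn{Hom}_C(N \otimes^{\mrm{L}}_A C, -) \cong \mrm{R}\opn{Hom}_A(N, -)$ combined with the projection formula
\[
\mrm{R}\opn{Hom}_A(N, R_A) \otimes^{\mrm{L}}_A C \;\iso\; \mrm{R}\opn{Hom}_A(N, R_A \otimes^{\mrm{L}}_A C),
\]
which is available because $N \in \cat{D}^{+}_{\mrm{f}}(A)$ is pseudo-coherent and $C$ has finite flat dimension over $A$; this reduces the biduality statement over $C$ to the one over $A$.

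To endow $R_A \otimes^{\mrm{L}}_A C$ with a rigidifying isomorphism, apply $- \otimes^{\mrm{L}}_A C$ to the structure isomorphism $\rho : R_A \iso \mrm{R}\opn{Hom}_{A \otimes^{\mrm{L}}_{\K} A}(A, R_A \otimes^{\mrm{L}}_{\K} R_A)$ and use change of rings along the map $A \otimes^{\mrm{L}}_{\K} A \to C \otimes^{\mrm{L}}_{\K} C$ together with the identification
\[
(R_A \otimes^{\mrm{L}}_A C) \otimes^{\mrm{L}}_{\K} (R_A \otimes^{\mrm{L}}_A C) \;\cong\; (R_A \otimes^{\mrm{L}}_{\K} R_A) \otimes^{\mrm{L}}_{A \otimes^{\mrm{L}}_{\K} A} (C \otimes^{\mrm{L}}_{\K} C).
\]
The uniqueness statement in Corollary \ref{cor:group} then yields the first identification; the second follows by the same argument applied to $h$, once one observes that $B \otimes_A C \in \thecat$ and that $h$ is again K-flat of finite flat dimension. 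Theorem \ref{thm:reduction} together with the transfer machinery from Section 2 should be the workhorses for comparing these Hochschild-type derived Homs across quasi-isomorphisms of DG-resolutions of $\K \to A$ and $\K \to C$.

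With these identifications in hand, the base change isomorphism is a formal calculation. Compute
\[
(f')^{!}(M \otimes_A C) = \mrm{R}\opn{Hom}_{B \otimes_A C}\!\bigl((B \otimes_A C) \otimes^{\mrm{L}}_C \mrm{R}\opn{Hom}_C(M \otimes_A C,\, R_A \otimes^{\mrm{L}}_A C),\; R_B \otimes^{\mrm{L}}_B (B \otimes_A C)\bigr),
\]
then collapse the inner $\mrm{R}\opn{Hom}_C$ via the adjunction and projection formula above to $\mrm{R}\opn{Hom}_A(M, R_A) \otimes^{\mrm{L}}_A C$, rewrite $(B \otimes_A C) \otimes^{\mrm{L}}_C (- \otimes^{\mrm{L}}_A C) \cong (B \otimes^{\mrm{L}}_A -) \otimes^{\mrm{L}}_B (B \otimes_A C)$ by associativity, and finally commute $- \otimes^{\mrm{L}}_B (B \otimes_A C)$ past $\mrm{R}\opn{Hom}_B(-, R_B)$ using the projection formula (valid since $B \otimes^{\mrm{L}}_A \mrm{R}\opn{Hom}_A(M, R_A)$ is pseudo-coherent over $B$ and $B \otimes_A C$ has finite flat dimension over $B$). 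The result is $\mrm{L}h^{*}(f^{!}(M))$. The principal obstacle is the construction and verification of the rigidifying isomorphism: tracking the compatibility through the non-commutative DG-ring $A \otimes^{\mrm{L}}_{\K} A$, and checking that the resulting isomorphism is independent of the chosen DG-resolutions, is precisely the situation in which the framework of Section 2 and the formulas of Section 5 are indispensable.
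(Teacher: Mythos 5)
The central claim of your proposal—that $R_C \cong R_A \otimes^{\mrm{L}}_A C$ for any K-flat map $g : A \to C$ in $\thecat$ with $C$ of finite flat dimension over $A$—is false, and the proof cannot be repaired along these lines. For $R_A \otimes^{\mrm{L}}_A C$ to be a dualizing DG-module over $C$ the map $g$ would have to be Gorenstein, which K-flatness plus finite flat dimension does not give. Concretely, take $\K = A$ a field and $C = \K[x,y]/(x^2,xy,y^2)$. Then $C \in \thecat$, $C$ is free (hence K-flat) over $A$ and has flat dimension $0$, but $C$ is not Gorenstein, so $R_A \otimes^{\mrm{L}}_A C = C$ fails to have finite injective dimension relative to $\cat{D}(C)$ and is therefore not dualizing. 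Your proposed verification of the dualizing property only checks the homothety isomorphism $C \to \mrm{R}\opn{Hom}_C(R_A\otimes^{\mrm{L}}_A C, R_A\otimes^{\mrm{L}}_A C)$ via adjunction and projection; it does not touch the finite injective dimension condition, which is exactly where Gorensteinness of $g$ enters and where the argument breaks. Once $R_A\otimes^{\mrm{L}}_A C$ is not the rigid dualizing DG-module $R_C$ (nor even dualizing), your concluding computation is manipulating a functor built from the wrong pair $(R_A\otimes^{\mrm{L}}_A C,\, R_B\otimes^{\mrm{L}}_B(B\otimes_A C))$ rather than the pair $(R_C, R_{B\otimes_A C})$ appearing in the definition of $(f')^{!}$.

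The paper's proof of Theorem \ref{thm:base-change} sidesteps this issue by never comparing $R_C$ with $R_A$. It factors $f$, after a quasi-isomorphic modification, as a cohomologically essentially smooth map followed by a cohomologically finite one, and proves base change separately for each type using the concrete formulas $f^{!}(-) \cong -\otimes^{\mrm{L}}_A \Omega_{B/A}$ (Theorem \ref{theorem:smooth}, with Proposition \ref{prop:smooth-base-change} supplying $\Omega_{B\otimes_A C/C} \cong \Omega_{B/A}\otimes_A C$) and $f^{!}(-) \cong \mrm{R}\opn{Hom}_A(B,-)$ (Theorem \ref{theorem:finite}, with Proposition \ref{prop:tensor-with-ffd} supplying the projection formula), then glues via pseudofunctoriality (Proposition \ref{prop:pseudofunctor}). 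These formulas make no reference to the dualizing modules of $C$ or $B\otimes_A C$, so the behavior of rigid duals under pullback along $g$ never has to be addressed. Your formal manipulation would become correct if you replaced $R_C$ and $R_{B\otimes_A C}$ by \emph{any} pair of dualizing DG-modules $S_C$, $S_{B\otimes_A C}$ with $(f')^{!}(S_C) \cong S_{B\otimes_A C}$, since the tilting twists cancel in the definition of $(f')^{!}$; but exhibiting such a compatible pair without circularity is precisely what the paper's factorization argument provides, and what your proposal lacks.
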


\subsection{Scope of this work}

We now discuss the scope of this work, compared to related papers on rigid dualizing complexes.
It was shown in \cite[Theorem 8.5.6]{AIL1} that over a base Gorenstein ring of finite Krull dimension,
an essentially of finite type algebra has a  rigid dualizing complex if and only if it has finite flat dimension over the base.
Since by Kawasaki's solution to Sharp's conjecture (\cite[Corollary 1.4]{Ka}),
every noetherian ring possessing a dualizing complex is a quotient of such Gorenstein ring,
in some sense, this is the strongest possible existence result.
The approach to rigid dualizing complexes taken in \cite{YZ1, YZ2, YZ3} is very different than the one of \cite{AIL1, AIL2}.
The main difference is that the latter presupposes the existence of global Grothendieck duality theory,
while the aim of the former is to develop the theory of rigid dualizing complex,
and use it as a self-contained approach to Grothendieck duality theory.

The approach we take in this paper in some sense combines these two approaches.
On one hand, we work in $\thecat$, the precise DG generalization of the scope of \cite{AIL1}.
In addition, we assume Grothendieck duality theory over ordinary commutative rings.
On the other hand, we cannot assume the existence of a twisted inverse image pseudofunctor over the category of DG-rings,
because as far as we know, no previous such work on this pseudofunctor worked in such a great generality as done here.
Thus, we will generalize here the main results of \cite{YZ1,YZ2} to $\thecat$,
and this will allow us to construct the twisted inverse image pseudofunctor on $\thecat$.

\subsection{Related papers}

We finish the introduction by discussing some related works concerning duality theory and dualizing DG-modules in derived algebraic geometry.
The first paper we are aware of that defined dualizing DG-modules over DG-rings was \cite{Hi}, 
where Hinich defined them over some specific DG-rings associated to a given local ring.
In the generality we work in this paper, 
dualizing DG-modules were first defined by Frankild, Iyengar and Jorgensen in \cite{FIJ},
where some basic properties of dualizing DG-modules were established.
The recent paper \cite{Ye1} by Yekutieli generalized the definition further,
and made a very detailed study of dualizing DG-modules in a very general commutative setting.
We will use \cite{Ye1} as our main reference for facts about dualizing DG-modules.
Finally, dualizing DG-modules were generalized even further to $E_{\infty}$-rings in the recent work of Lurie \cite{Lu}.

Regarding duality theory in derived algebraic geometry, 
we are aware of only one paper that constructed the twisted inverse image pseudofunctor in such a setting,
namely, the recent paper \cite{Ga} by Gaitsgory. 
In this paper, Gaitsgory extends duality theory to the category of DG-schemes which are of finite type over a field of characteristic zero.
To define $f^{!}$, Gaitsgory generalizes the approach of Deligne, defining $f^{!}$ as a right adjoint.
One the one hand, his work is more general, as he works with global DG-schemes.
On the other hand, in the affine situation, 
we of course work in a much more general setting, 
allowing a base Gorenstein ring, rather then a field,
and also considering essentially finite type maps instead of maps of finite type.
It is our expectation that the greater generality of this paper will be useful in arithmetic applications.

\section{Preliminaries}

We shall assume classical Grothendieck duality theory,
as developed in \cite{Co,RD,Li,Ne} and its extension to essentially finite type maps, 
explained in \cite{Na}.
We will mostly follow the notations of \cite{Ye1} concerning DG-rings and their derived categories. 
See \cite{Av,BS,Ke} for more background on commutative DG-rings.
Below is a short summary of the notation we use here.

A differential graded ring (abbrivated DG-ring),
is a $\mathbb{Z}$-graded ring 
\[
A = \bigoplus_{n=-\infty}^{\infty} A^n,
\]
together with a degree $+1$ differential $d:A \to A$, 
such that 
\[
d(a\cdot b) = d(a)\cdot b + (-1)^i\cdot a \cdot d(b), 
\]
for any $a \in A^i, b \in A^j$. 
We say $A$ is commutative if for any $a \in A^i, b \in A^j$, 
we have that $b\cdot a = (-1)^{i\cdot j} \cdot a \cdot b$, and $a\cdot a = 0 $ if $i$ is odd.
A DG-ring $A$ is said non-positive if $A^i = 0$ for all $i>0$.
\newline
\textbf{In this paper we assume that all DG-rings are commutative and non-positive.}\\

For such a DG-ring $A$, 
it is important to note that $A^0$ is an ordinary commutative ring.
In addition, the cohomology $H(A)$ of a DG-ring $A$ is a graded-ring,
and moreover, $H^0(A)$ is also a commutative ring, as it is a quotient of $A^0$.
We will set $\bar{A} := H^0(A)$. 
For any DG-ring $A$, there are homomorphisms of DG-rings $A^0 \to A$ and $A \to \bar{A}$.

Given a DG-ring $A$, we denote by $\opn{DGMod} A$ the category of DG-modules over it.
There is a forgetful functor $M \mapsto M^{\natural}$ which forgets the differential, 
and associates to a DG-module the underlying graded module.
The category $\opn{DGMod} A$ is an abelian category. By inverting quasi-isomorphisms in it,
one obtains its derived category, which we will denote by $\cat{D}(A)$. See \cite{Ke} for details about this construction.
We denote by $\cat{D}^{+}(A), \cat{D}^{-}(A)$ and $\cat{D}^{\mrm{b}}(A)$ the full triangulated subcategories of $\cat{D}(A)$ composed of DG-modules whose cohomologies are bounded below, bounded above or bounded.
Given a DG-module $M$ and an integer $n$, 
$\mrm{H}^n(M)$ has the structure of a $\bar{A} := H^0(A)$-module.
Assuming $\bar{A}$ is noetherian,
we say $M$ has finitely generated cohomologies if for each $n$,
$\mrm{H}^n(M)$ is finitely generated $\bar{A}$-module.
Again, under this noetherian assumption, 
we denote by $\cat{D}_{\mrm{f}}(A)$ the triangulated subcategory of $\cat{D}(A)$ made of DG-modules with finitely generated cohomology. 
As usual, one may combine these two finiteness conditions, and write, for example, $\cat{D}^{-}_{\mrm{f}}(A)$
for the triangulated subcategory made of DG-modules which are bounded above and have finitely generated cohomology.
For any map of DG-rings $f:A \to B$,
there is an associated forgetful functor $\opn{For}_f:\opn{DGMod} B \to \opn{DGMod} A$.
Since it is clearly exact, it also induces a functor 
$\cat{D}(B) \to \cat{D}(A)$, 
which, by abuse of notation, we also denote by $\opn{For}_f$.

We say that a DG-ring $A$ is cohomologically noetherian,
if $\bar{A}$ is a noetherian ring, 
$H(A)$ is bounded DG-module, 
and for each $i<0$, 
$H^i(A)$ is a finitely generated $\bar{A}$-module. 
It is clear that being cohomologically noetherian is preserved under quasi-isomorphisms.

Following \cite[Section 2]{Ye1}, we say that a DG-module $M \in \cat{D}(A)$ 
has finite flat dimension relative to $\cat{D}(A)$,
if there are integers $d_1,d_2$, such that for any DG-module $N$ with $\inf H(N) = e \ge -\infty$, $\sup H(N) = f \le \infty$,
we have that $\inf H(M\otimes^{\mrm{L}}_A N) \ge e-d_1$ and $\sup H(M\otimes^{\mrm{L}}_A N) \le f+d_2$.
One defines similarly the notion of finite injective dimension and finite projective dimension relative to $\cat{D}(A)$.
Given a cohomologically noetherian DG-ring $A$,
a dualizing DG-module over it is a DG-module $R \in \cat{D}^{\mrm{b}}_{\mrm{f}}(A)$
which has finite injective dimension relative to $\cat{D}(A)$,
and such that the canonical map $A \to \mrm{R}\opn{Hom}_A(R,R)$ is an isomorphism in $\cat{D}(A)$.
A tilting DG-module is a DG-module $P \in \cat{D}(A)$, such that
$P\otimes^{\mrm{L}}_A Q \cong A$ for some $Q \in \cat{D}(A)$.
See \cite[Section 6]{Ye1} for a discussion. 
Given two dualizing DG-modules $R_1, R_2$ over $A$,
by \cite[Theorem 7.10]{Ye1}, there is some tilting DG-module $P$, 
such that $R_1 \cong R_2 \otimes^{\mrm{L}}_A P$.

Given a commutative ring $\K$, and a $\K$ DG-ring $A$,
there is a functor
\[
\mrm{R}\opn{Hom}_{A\otimes^{\mrm{L}}_{\K} A}(A,-\otimes^{\mrm{L}}_{\K} -) : \cat{D}(A) \times \cat{D}(A) \to \cat{D}(A)
\]
which we refer to as derived Hochschild cohomology. 
Its existence is shown in \cite[Theorem 6.15]{Ye2} (in a much more general setting),
where it was denoted by $\opn{Rect}_{A/\K}(-,-)$.
Our notation follows \cite[Section 3]{AILN},
where a similar result (\cite[Theorem 3.2]{AILN}) was shown in the particular case where $A = A^0$. 

We now arrive to the main technical definition of this paper. 
It is an immediate generalization of \cite[Definition 4.1]{YZ1}.

\begin{dfn}
Let $\K$ be a noetherian ring, 
and let $A$ be a cohomologically noetherian DG $\K$-ring.
A pair $(R,\rho)$ is called a rigid DG-module over $A$ relative to $\K$,
if $R \in \cat{D}^{\mrm{b}}_{\mrm{f}}(A)$,
$R$ has finite flat dimension over $\K$,
and $\rho$ is an isomorphism
\[
\rho : R \iso \mrm{R}\opn{Hom}_{A\otimes^{\mrm{L}}_{\K} A}(A,R\otimes^{\mrm{L}}_{\K} R)
\]
in $\cat{D}(A)$. 
If in addition, $R$ is a dualizing DG-module over $A$, then
$(R,\rho)$ is called a rigid dualizing DG-module over $A$ relative to $\K$.
\end{dfn}

Let $\K$ be a noetherian ring, 
and let $A$ be a DG-ring over $\K$.
We say that $A$ is cohomologically essentially of finite type over $\K$
if $A$ is cohomologically noetherian,
and the composed map $\K \to A \to \bar{A}$ is essentially of finite type.
We denote by $\thecat$ the category of DG-rings $A$ which are cohomologically essentially of finite type over $\K$,
and such that $A$ has finite flat dimension over $\K$. 
Maps in this category are $\K$-linear maps of DG-rings.
Notice that $\thecat$ is closed under $\K$-linear quasi-isomorphisms. 
Moreover, the flat dimension assumption is precisely what is needed for $\thecat$ being closed under derived tensor product.
That is, if $A, B \in \thecat$, with one of them being K-flat over $\K$,
then $A\otimes_{\K} B \in \thecat$. 
\newline
In the rest of the paper, we will develop the theory of rigid dualizing DG-modules over DG-rings in $\thecat$,
where $\K$ is a fixed Gorenstein noetherian ring of finite Krull dimension.
\section{Homological equivalences between DG-rings}

A major advantage when working with DG-rings compared to ordinary rings is the existence of non-trivial quasi-isomorphisms.
Thus, given a DG-ring $A$ that we wish to study, 
we may replace it by a quasi-isomorphic DG-ring $B$ which might have a more convenient structure. 
We then use the fact that the derived categories of DG-modules over $A$ and $B$ are equivalent to transfer homological results from
$\cat{D}(B)$ to $\cat{D}(A)$. 
To make the transfer process, it is convenient to use the following notation: given a map $f:A\to B$ between two DG-rings, 
we denote by $\mrm{L}f^*$ and $f_*$ the functors
\[
\mrm{L}f^*(-) := -\otimes^{\mrm{L}}_A B : \cat{D}(A) \to \cat{D}(B), \quad
f_*(-) := \opn{For}_f(-) : \cat{D}(B) \to \cat{D}(A).
\]
\begin{prop}\label{prop:tensor-with-forget}
Let $f:A \to B$ be a quasi-isomorphism between DG rings. Then there are functorial isomorphisms
\[
B\otimes^{\mrm{L}}_A \opn{For}_f(-) = \mrm{L}f^{*}(f_*(-)) \cong 1_{\cat{D}(B)},
\]
\[
\opn{For}_f(B\otimes^{\mrm{L}}_A -) = f_{*}(\mrm{L}f^*(-)) \cong 1_{\cat{D}(A)},
\]
and
\[
\mrm{R}\opn{Hom}_A(B,f_*(-)) \cong 1_{\cat{D}(B)}.
\]
\end{prop}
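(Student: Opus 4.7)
The unifying observation I would exploit is that for any map of DG-rings $f : A \to B$, the restriction of scalars functor $f_* : \cat{D}(B) \to \cat{D}(A)$ has a left adjoint $\mrm{L}f^*$ and a right adjoint $\mrm{R}\opn{Hom}_A(B,-)$; the plan is to show that when $f$ is a quasi-isomorphism, both adjunctions become equivalences of triangulated categories. The three stated isomorphisms are (up to triangle identities) precisely the statements that the unit of $(\mrm{L}f^*, f_*)$, the counit of $(\mrm{L}f^*, f_*)$, and the unit of $(f_*, \mrm{R}\opn{Hom}_A(B,-))$ are isomorphisms. The essential additional tool will be that $f_*$ is \emph{conservative}: since it does not alter underlying complexes, a morphism in $\cat{D}(B)$ is invertible iff its image under $f_*$ is invertible.

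First I would prove statement (2) directly. For $M \in \cat{D}(A)$, pick a K-flat resolution $P \iso M$ over $A$, so that $B \otimes^{\mrm{L}}_A M$ is represented by $B \otimes_A P$ with its canonical $B$-action. Because $P$ is K-flat and $f : A \to B$ is a quasi-isomorphism of DG-$A$-modules, the map $f \otimes_A \opn{id}_P : A \otimes_A P \to B \otimes_A P$ is a quasi-isomorphism. Since $A \otimes_A P = P \simeq M$, this yields the desired natural isomorphism $M \iso f_*(B \otimes^{\mrm{L}}_A M)$ in $\cat{D}(A)$; this is in fact the unit $\eta_M$ of the adjunction $(\mrm{L}f^*, f_*)$.

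For (1), the counit $\varepsilon_N : B \otimes^{\mrm{L}}_A f_*(N) \to N$ satisfies the triangle identity $f_*(\varepsilon_N) \circ \eta_{f_*(N)} = \opn{id}_{f_*(N)}$. Since $\eta_{f_*(N)}$ is invertible by (2), $f_*(\varepsilon_N)$ is also invertible, and conservativity of $f_*$ then forces $\varepsilon_N$ itself to be invertible in $\cat{D}(B)$. Statement (3) will be parallel: the counit $\varepsilon'_M : f_*(\mrm{R}\opn{Hom}_A(B,M)) \to M$ of $(f_*, \mrm{R}\opn{Hom}_A(B,-))$ agrees, under the canonical identification $\mrm{R}\opn{Hom}_A(B,M) \simeq \mrm{R}\opn{Hom}_A(A,M) = M$ induced by the quasi-isomorphism $f$, with an isomorphism in $\cat{D}(A)$. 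The corresponding triangle identity then gives invertibility of $f_*(\eta'_N)$ for the unit $\eta'_N : N \to \mrm{R}\opn{Hom}_A(B, f_*(N))$, and conservativity again concludes.

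The core inputs -- existence of K-flat resolutions over non-positive commutative DG-rings and preservation of quasi-isomorphisms by K-flat tensor -- are standard. The only subtle point I anticipate is bookkeeping: one must verify that the canonical maps appearing in (1) and (3), defined intrinsically, are genuinely the adjunction counits/units manipulated above. I expect this naturality check to be the sole mild obstacle in what is otherwise a formal argument.
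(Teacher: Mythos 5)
Your argument is correct and complete: the direct computation of the unit of $(\mrm{L}f^*,f_*)$ via a K-flat resolution, the analogous computation of the counit of $(f_*,\mrm{R}\opn{Hom}_A(B,-))$ via a K-injective resolution, and the bootstrapping of the remaining two isomorphisms from the triangle identities and the conservativity of $f_*$ together constitute the standard proof of this statement. The paper itself offers no argument here, deferring instead to the citation \cite[Proposition 2.6]{Ye2}; your write-up supplies precisely the reasoning that reference encapsulates, so the two are in substance the same.
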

\begin{proof}
This follows from \cite[Proposition 2.6]{Ye2}. 
\end{proof}

When constructing the derived category of a ring (or a DG-ring) as a localization at the class of quasi-isomorphisms, a priori a morphism between two complexes $M,N$ is given by a zig-zag
\[
\xymatrix{
 & C_1 & & C_3 & \dots & C_n\ar[rd]\\
 M \ar[ur] & & C_2 \ar[lu]\ar[ur] & & \dots \ar[lu] & & N
}
\]
in which all maps except possibly $C_n \to N$ are quasi-isomorphisms. 
It is a basic fact in the theory of derived categories, 
that one does not need to consider such a long chain, 
and that any morphism can be represented as
\[
\xymatrix{
 & C\ar[ld]\ar[rd] & \\
 M & & N
}
\]
where $C \to M$ is a quasi-isomorphism. 
Unfortunately, we do not know if such a reduction of zig-zags exists for maps of DG-rings,
so in general, in a multiplicative situation, we will work with long zig-zags. 
This motivates the next definitions.

\newpage

\begin{dfn}
\leavevmode
\begin{enumerate}
\item We say that two DG-rings $A$ and $B$ are homologically equivalent via a sequence $f=(f_1,\dots,f_{n-1})$, 
if there is a finite sequence of DG-rings 
\[
A = C_1, C_2,\dots,C_{n-1},C_n=B,
\]
and for each $1\le i \le n-1$, $f_i$ is either a quasi-isomorphism $f_i:C_i \to C_{i+1}$ or a quasi-isomorphism $f_i: C_{i+1} \to C_i$.
In this case, we write 
\[
f:A\heqv B. 
\]
\item Moreover, given a ring $\K$, if for each $i$, $C_i$ is a DG-ring over $\K$, and the maps $f_i:C_i \to C_{i+1}$ (or $f_i:C_{i+1} \to C_i$) are all $\K$-linear, then we say that $A$ and $B$ are homologically equivalent over $\K$ via $f$.
\end{enumerate}
\end{dfn}

A bit more generally, we will sometimes want to resolve by zig-zags a map of DG-rings.
\begin{dfn}\label{dfn:equiv-of-morphisms}
Let $\varphi:A \to B$ be a map between two DG-rings, and let 
\[
f=(f_1,\dots,f_{n-1}):A \heqv \til{A},\quad g=(g_1,\dots,g_{n-1}):B\heqv \til{B}
\]
be two homological equivalences of the same length, and such that for each $i$, 
the maps $f_i$ and $g_i$ are in the same direction.
Suppose we are given for $1\le i \le n$ a map of DG-rings $\psi_i:C_i \to D_i$,
with $\psi_1 = \varphi$, 
such that either there is a commutative diagram
\[
\xymatrix{
C_i \ar[r]^{f_i}\ar[d]^{\psi_i} & C_{i+1}\ar[d]^{\psi_{i+1}}\\
D_i \ar[r]^{g_i} & D_{i+1}
}
\]
or there is a commutative diagram
\[
\xymatrix{
C_i \ar[d]^{\psi_i} & C_{i+1}\ar[l]_{f_i}\ar[d]^{\psi_{i+1}}\\
D_i  & D_{i+1}\ar[l]_{g_i}
}
\]
Then we say that $\varphi: A\to B$ and $\chi := \psi_{n}: \til{A} \to \til{B}$ are homologically equivalent via $\psi = (\psi_1,\dots,\psi_{n})$, and write $\psi:\varphi \heqv \chi$.
\end{dfn}
It is clear from the definitions that if $A \in \thecat$, and if $A$ and $B$ are homologically equivalent over $\K$, then $B \in \thecat$.

Given a homological equivalence $f:A\heqv B$ over $\K$, we define an equivalence of categories
\[
\eqfun_f:\cat{D}(A) \to \cat{D}(B)
\]
as follows: first, if we are given a quasi-isomorphism $f_i:C_i \to C_{i+1}$, then we define $\eqfun_i := \mrm{L}(f_i)^*$, 
while if we are given a quasi-isomorphism $f_i:C_{i+1} \to C_i$, we define $\eqfun_i := (f_i)_*$.
Now, we set 
\[
\eqfun_f := \eqfun_{n-1} \circ \eqfun_{n-2} \circ \dots \circ \eqfun_2 \circ \eqfun_1 : \cat{D}(A) \to \cat{D}(B).
\]
By Proposition \ref{prop:tensor-with-forget}, we see that for each $i$, $\eqfun_i$ is a $\K$-linear equivalence, 
so that $\eqfun_f$ is also a $\K$-linear equivalence. 
It is clear from the definition of $\eqfun_f$ that there is an isomorphism
\begin{equation}\label{equiv-of-the-dgr}
\eqfun_f(A) \cong B
\end{equation}
in $\cat{D}(B)$.

Note also that if $A$ and $B$ are homologically equivalent via $f$ over $\K$, then $B$ and $A$ are homologically equivalent via 
\[
f^{-1} := (f_{n-1},\dots,f_1)
\]
over $\K$, and using Proposition \ref{prop:tensor-with-forget} again, we see that $\eqfun_f$ and $\eqfun_{f^{-1}}$ are quasi-inverse to each other. 
We now show that all the main constructions of this paper are essentially invariant with respect to such homological equivalences.

\begin{prop}\label{prop:transfer}
Let $\K$ be a ring, and let $f:A\heqv B$ be an homological equivalence over $\K$.
\begin{enumerate}
\item There is an isomorphism
\[
\mrm{R}\opn{Hom}_{A\otimes^{\mrm{L}}_{\K} A}(A,M\otimes^{\mrm{L}}_{\K} N) \cong
\eqfun_{f^{-1}}( \mrm{R}\opn{Hom}_{B\otimes^{\mrm{L}}_{\K} B}(B,\eqfun_f(M)\otimes^{\mrm{L}}_{\K} \eqfun_f(N)) )
\]
of functors 
\[
\cat{D}(A)\times\cat{D}(A)\to\cat{D}(A).
\]
\item If $A$ (and hence $B$) is cohomologically noetherian, and $R_A$ is a dualizing DG-module over $A$, then $\eqfun_f(R_A)$ is a dualizing DG-module over $B$. 
\item If $R_A$ has the structure of a rigid DG-module over $A$ relative to $\K$, 
then $\eqfun_f(R_A)$ has the structure of a rigid DG-module over $B$ relative to $\K$.
\item There is an isomorphism
\[
\mrm{R}\opn{Hom}_A(M,N) \cong
\eqfun_{f^{-1}}(\mrm{R}\opn{Hom}_B(\eqfun_f(M),\eqfun_f(N))) 
\]
of functors 
\[
\cat{D}(A)\times\cat{D}(A)\to\cat{D}(A).
\]
\end{enumerate}
\end{prop}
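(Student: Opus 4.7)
The plan is to prove all four assertions by induction on the length of the zig-zag $f$, reducing to the base case of a single $\K$-linear quasi-isomorphism between DG-rings. Since $\eqfun_f$ is defined compositionally and each assertion has the form of a natural isomorphism of functors, the inductive step is formal. The case of a quasi-isomorphism in the opposite direction is handled symmetrically using Proposition \ref{prop:tensor-with-forget}, which identifies $\mrm{L}f^*$ and $f_*$ as mutually quasi-inverse equivalences when $f$ is a quasi-isomorphism.

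Part (4) follows from the derived adjunction $\mrm{L}f^* \dashv f_*$: substituting $\mrm{L}f^*N$ for the second argument,
\[
\mrm{R}\opn{Hom}_B(\mrm{L}f^*M,\mrm{L}f^*N) \cong \mrm{R}\opn{Hom}_A(M,f_*\mrm{L}f^*N) \cong \mrm{R}\opn{Hom}_A(M,N),
\]
where the last step is Proposition \ref{prop:tensor-with-forget}; applying $f_* = \eqfun_{f^{-1}}$ yields the claim. For part (1), choose a K-flat DG-ring resolution $\til{A} \to A$ over $\K$. Composition with $f$ gives a K-flat resolution $\til{A} \to B$ as well, so both derived Hochschild cohomologies in (1) may be computed over the single DG-ring $\til{A}\otimes_\K\til{A}$. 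In $\cat{D}(\til{A}\otimes_\K\til{A})$, the quasi-isomorphism $A\to B$ is an isomorphism of $\til{A}\otimes_\K\til{A}$-modules, and by Proposition \ref{prop:tensor-with-forget} the restriction of $\eqfun_f(M) = M\otimes^{\mrm{L}}_A B$ to $\cat{D}(\til{A})$ is canonically isomorphic to $M$, so $\eqfun_f(M)\otimes^{\mrm{L}}_\K \eqfun_f(N) \cong M\otimes^{\mrm{L}}_\K N$ in $\cat{D}(\til{A}\otimes_\K\til{A})$. Feeding these isomorphisms into $\mrm{R}\opn{Hom}_{\til{A}\otimes_\K\til{A}}(-,-)$ and applying $\eqfun_{f^{-1}}$ yields (1).

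For (2), $\eqfun_f$ is a $\K$-linear triangulated equivalence preserving cohomological boundedness and finite generation of cohomology, since $H^0(A)\cong H^0(B)$ and $H^*$ is preserved under restriction along a quasi-isomorphism. By (4), $\eqfun_f$ intertwines $\mrm{R}\opn{Hom}_A$ with $\mrm{R}\opn{Hom}_B$; applying it to the homothety isomorphism $A \iso \mrm{R}\opn{Hom}_A(R_A,R_A)$ and invoking the identification $\eqfun_f(A)\cong B$ from \eqref{equiv-of-the-dgr} gives the homothety isomorphism for $\eqfun_f(R_A)$. Finite injective dimension is transported analogously. For (3), finite flat dimension over $\K$ is preserved because $\eqfun_f(R_A)$ and $R_A$ become quasi-isomorphic after restriction to $\cat{D}(\K)$; and applying $\eqfun_f$ to the rigidifying isomorphism $\rho$ and composing with the isomorphism of (1) at $(R_A,R_A)$ produces a rigidifying isomorphism for $\eqfun_f(R_A)$.

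The main obstacle is the bookkeeping to make the inductive reduction uniform and coherent: one must verify that the isomorphism in (1) is natural in the pair $(M,N)$ and compatible with the composition of quasi-isomorphisms along a zig-zag, so that iterating elementary steps produces a well-defined isomorphism independent of the chosen intermediate K-flat resolutions.
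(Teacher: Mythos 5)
Your proposal is correct and takes essentially the same approach as the paper: reduce to a single quasi-isomorphism, compute the Hochschild expression via a common K-flat resolution $\til{A}$ that resolves both $A$ and $B$, transport the homothety and finiteness conditions along the equivalence, and close the induction using Proposition \ref{prop:tensor-with-forget}. The only organizational difference is that you prove (4) directly from the derived $\opn{Hom}$-$\otimes$ adjunction and then use it to get (2), whereas the paper proves (2) by an explicit computation (citing \cite[Proposition 7.5(2)]{Ye1} for the $f:B\to A$ direction) and declares (4) identical, but this is a harmless reordering with the same underlying content.
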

\begin{proof}
\leavevmode
\begin{enumerate}
\item Let $M,N \in \cat{D}(A)$. Step 1: Suppose first that $f:A\to B$ is a single quasi-isomorphism. Let $\K \to \til{A} \xrightarrow{g} A$ be a K-flat resolution of $\K \to A$. 
Then since $f$ is a quasi-isomorphism,
\[
\K \to \til{A} \xrightarrow{h} B
\]
is a K-flat resolution of $\K \to B$, where $h = f\circ g$.
Hence, by \cite[Theorem 6.15]{Ye2}, there is a functorial isomorphism
\begin{eqnarray}
\eqfun_{f^{-1}}( \mrm{R}\opn{Hom}_{B\otimes^{\mrm{L}}_{\K} B}(B,\eqfun_f(M)\otimes^{\mrm{L}}_{\K} \eqfun_f(N)) ) =\nonumber\\
f_*(\mrm{R}\opn{Hom}_{B\otimes^{\mrm{L}}_{\K} B}(B,\mrm{L}f^*(M)\otimes^{\mrm{L}}_{\K} \mrm{L}f^*(N))) \cong\nonumber\\
f_*(\mrm{R}\opn{Hom}_{\til{A}\otimes_{\K} \til{A}}(B,h_*(\mrm{L}f^*(M))\otimes^{\mrm{L}}_{\K} h_*(\mrm{L}f^*(N)))).\nonumber
\end{eqnarray}
Note that $h_* = (f\circ g)_* \cong g_* \circ f_*$, so that $h_*(\mrm{L}f^*(-)) \cong g_*(-)$. Also, it is clear that
\[
f_*(\mrm{R}\opn{Hom}_{\til{A}\otimes_{\K} \til{A}}(B,-)) \cong \mrm{R}\opn{Hom}_{\til{A}\otimes_{\K} \til{A}}(A,-),
\]
and using \cite[Theorem 6.15]{Ye2} again, we see that
\[
\mrm{R}\opn{Hom}_{A\otimes^{\mrm{L}}_{\K} A}(A,M\otimes^{\mrm{L}}_{\K} N) \cong 
\mrm{R}\opn{Hom}_{\til{A}\otimes_{\K} \til{A}}(A,g_*(M)\otimes^{\mrm{L}}_{\K} g_*(N)),
\]
so in this case we get the required functorial isomorphism.

Step 2: Assume now that that $f:B\to A$ is a single quasi-isomorphism in the other direction.
Let $\K \to \til{B} \xrightarrow{g} B$ be a K-flat resolution of $\K \to B$. Then, 
\[
\K \to \til{B} \xrightarrow{h} A
\]
is K-flat resolution of $\K \to A$, where $h = f \circ g$. 
By \cite[Theorem 6.15]{Ye2},
\begin{eqnarray}
\eqfun_{f^{-1}}( \mrm{R}\opn{Hom}_{B\otimes^{\mrm{L}}_{\K} B}(B,\eqfun_f(M)\otimes^{\mrm{L}}_{\K} \eqfun_f(N)) ) =\nonumber\\
\mrm{L}f^* ( \mrm{R}\opn{Hom}_{B\otimes^{\mrm{L}}_{\K} B}(B,f_*(M)\otimes^{\mrm{L}}_{\K} f_*(N)))\cong\nonumber\\
\mrm{L}f^*( \mrm{R}\opn{Hom}_{\til{B}\otimes_{\K}\til{B}}(B,g_*(f_*(M)) \otimes^{\mrm{L}}_{\K} g_*(f_*(N)))\cong\nonumber\\
\mrm{L}f^*( \mrm{R}\opn{Hom}_{\til{B}\otimes_{\K}\til{B}}(B,h_*(M) \otimes^{\mrm{L}}_{\K}h_*(N))),\nonumber
\end{eqnarray}
On the other hand,
\begin{eqnarray}
\mrm{R}\opn{Hom}_{A\otimes^{\mrm{L}}_{\K} A}(A,M\otimes^{\mrm{L}}_{\K} N) \cong\nonumber\\
\mrm{R}\opn{Hom}_{\til{B}\otimes_{\K}\til{B}}(A,h_*(M) \otimes^{\mrm{L}}_{\K}h_*(N))\cong\nonumber\\
\mrm{L}f^*(f_*( \mrm{R}\opn{Hom}_{\til{B}\otimes_{\K}\til{B}}(A,h_*(M) \otimes^{\mrm{L}}_{\K}h_*(N)) ) )\cong\nonumber\\
\mrm{L}f^*( \mrm{R}\opn{Hom}_{\til{B}\otimes_{\K}\til{B}}(B,h_*(M) \otimes^{\mrm{L}}_{\K}h_*(N))),\nonumber
\end{eqnarray}
which proves the claim in this case.
\newline
Step 3: The general case is now easy to verify by induction, using steps 1,2, and using the isomorphisms $f_* \mrm{L}f^* \cong 1$ and $\mrm{L}f^* f_* \cong 1$.
\item Let $R_A$ be a dualizing DG-module over $A$. It is clear that $\eqfun_f$ preserves boundedness, finitely generated cohomology, and finite injective dimension. If $f:B\to A$ is a single quasi-isomorphism, the fact that $f_*(R_A)$ is a dualizing DG-module is shown in \cite[Proposition 7.5(2)]{Ye1}. If $f:A \to B$ is a single quasi-isomorphism, we have that
\begin{eqnarray}
\mrm{R}\opn{Hom}_B(\mrm{L}f^*(R_A),\mrm{L}f^*(R_A)) \cong\nonumber\\ \mrm{L}f^*(f_*(\mrm{R}\opn{Hom}_B(\mrm{L}f^*(R_A),\mrm{L}f^*(R_A)) )) \cong\nonumber\\
\mrm{L}f^* \mrm{R}\opn{Hom}_A(R_A,R_A) \cong \mrm{L}f^*A = B.\nonumber
\end{eqnarray}
For a general homological equivalence $f$ between $A$ and $B$, the fact that $\eqfun_f(R_A)$ is a dualizing DG-module over $B$ follows now by induction. 

\item Assume that $R_A$ has the structure of a rigid DG-module over $A$ relative to $\K$. Set $R_B := \eqfun_f(R_A)$.
When considered as objects of $\cat{D}(\K)$, $R_A$ and $R_B$ are isomorphic, so $R_B$ has finite flat dimension over $\K$.
Since $\eqfun_f$ and $\eqfun_{f^{-1}}$ are quasi-inverse to each other, we have that
\begin{eqnarray}
\mrm{R}\opn{Hom}_{B\otimes^{\mrm{L}}_{\K} B}(B,R_B\otimes^{\mrm{L}}_{\K} R_B) \cong\nonumber\\
\eqfun_f ( \eqfun_{f^{-1}} ( 
\mrm{R}\opn{Hom}_{B\otimes^{\mrm{L}}_{\K} B}(B,\eqfun_f(R_A)\otimes^{\mrm{L}}_{\K} \eqfun_f(R_A))
) ),\nonumber
\end{eqnarray}
and by part (1) of this proposition, this is naturally isomorphic to
\[
\eqfun_f(\mrm{R}\opn{Hom}_{A\otimes^{\mrm{L}}_{\K} A}(A,R_A\otimes^{\mrm{L}}_{\K} R_A) ) \cong \eqfun_f(R_A) = R_B,
\]
where the last isomorphism follows from the rigidity of $R_A$.
\item Identical to the proof of (2) above.
\end{enumerate}
\end{proof}

\begin{prop}\label{prop:transfer-finite}
Let $\varphi:A \to B$ and $\chi:\til{A}\to \til{B}$ be maps of DG-rings, 
and let $\psi:\varphi\hequiv \chi$ be an homological equivalence, 
and denote by $f:A \hequiv \til{A}$ and $g:B \hequiv \til{B}$ the homological equivalences underlying it.
Then there is an isomorphism
\[
\eqfun_{g^{-1}}(\mrm{R}\opn{Hom}_{\til{A}}(\til{B},\eqfun_f(-))) \cong \mrm{R}\opn{Hom}_A(B,-)
\]
of functors 
\[
\cat{D}(A) \to \cat{D}(B).
\]
\end{prop}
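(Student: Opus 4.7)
My plan is to imitate the structure of the proof of Proposition \ref{prop:transfer}(1): first handle two base cases according to the direction of a single quasi-isomorphism step in $\psi$, and then bootstrap to the general case by induction on the length of $\psi$.

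For the first base case, $\psi$ consists of a single commutative square with quasi-isomorphisms $u:A \to \til{A}$ and $v:B \to \til{B}$ satisfying $\chi u = v\varphi$, so that $\eqfun_f = \mrm{L}u^*$ and $\eqfun_{g^{-1}} = v_*$. Commutativity of the square furnishes a natural map $\til{A} \otimes^{\mrm{L}}_A B \to \til{B}$ in $\cat{D}(\til{A})$ which is a quasi-isomorphism because it restricts to $v$ along $u$; thus $\mrm{L}u^*B \cong \til{B}$ in $\cat{D}(\til{A})$, compatibly with the $v$-induced $B$-module structures. Combining this identification with the $(\mrm{L}u^*, u_*)$-adjunction and the isomorphism $u_* \mrm{L}u^* \cong 1$ from Proposition \ref{prop:tensor-with-forget} gives
\[
\mrm{R}\opn{Hom}_{\til{A}}(\til{B}, \mrm{L}u^* M) \cong \mrm{R}\opn{Hom}_{\til{A}}(\mrm{L}u^* B, \mrm{L}u^* M) \cong \mrm{R}\opn{Hom}_A(B, u_* \mrm{L}u^* M) \cong \mrm{R}\opn{Hom}_A(B, M),
\]
and applying $v_*$ delivers the desired isomorphism. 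In the second base case, the square has quasi-isomorphisms $u:\til{A} \to A$ and $v:\til{B} \to B$ satisfying $\varphi u = v\chi$, so $\eqfun_f = u_*$ and $\eqfun_{g^{-1}} = \mrm{L}v^*$. The analogous argument, now using the quasi-isomorphism $A \otimes^{\mrm{L}}_{\til{A}} \til{B} \to B$ in $\cat{D}(\til{A})$ produced by the square together with the same adjunction along $u$, exhibits $\mrm{R}\opn{Hom}_{\til{A}}(\til{B}, u_* M) \cong v_* \mrm{R}\opn{Hom}_A(B, M)$ in $\cat{D}(\til{B})$; applying $\mrm{L}v^*$ and invoking $\mrm{L}v^* v_* \cong 1$ from Proposition \ref{prop:tensor-with-forget} concludes this case.

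The general case then follows by induction on the length of $\psi$: decompose $\psi$ into its final commutative square $\psi''$ and the preceding equivalence $\psi'$ of shorter length, apply the inductive hypothesis to $\psi'$ and the appropriate base case to $\psi''$, and paste the resulting natural isomorphisms using the fact that $\eqfun_{(-)}$ is the composition of the single-step functors. The main obstacle I anticipate is purely bookkeeping: one must verify that the isomorphisms above are compatible not merely as isomorphisms of underlying objects in $\cat{D}(A)$ but as $\til{B}$-, respectively $B$-module structures, so that intermediate identifications such as $\mrm{L}u^* B \cong \til{B}$ really thread through naturally and produce a genuine natural isomorphism of functors $\cat{D}(A) \to \cat{D}(B)$.
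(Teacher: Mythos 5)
Your proof is correct, and in a couple of places it takes a genuinely different route from the paper's.

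In Step~1, the paper's argument first rewrites $\eqfun_f(M) = M\otimes^{\mrm{L}}_A\til{A}$ as $\mrm{R}\opn{Hom}_A(\til{A},M)$ using the third isomorphism of Proposition~\ref{prop:tensor-with-forget}, and then applies the $\opn{Hom}$--$\opn{Hom}$ adjunction $\mrm{R}\opn{Hom}_{\til A}(\til B,\mrm{R}\opn{Hom}_A(\til A,M)) \cong \mrm{R}\opn{Hom}_A(\til B,M)$. You instead identify the \emph{source} of the $\opn{Hom}$, observing from the commutative square that $\mrm{L}u^*B \cong \til B$, then use the tensor--$\opn{Hom}$ adjunction $\mrm{R}\opn{Hom}_{\til A}(\mrm{L}u^*B,-)\cong\mrm{R}\opn{Hom}_A(B,u_*(-))$ and finish with $u_*\mrm{L}u^*\cong 1$. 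Both paths are essentially equally short and rely on the same primitive facts.

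In Step~2 the difference is more substantial. The paper uses the isomorphism $\mrm{L}v^*(-)\cong\mrm{R}\opn{Hom}_{\til B}(B,-)$ and $\opn{Hom}$--$\opn{Hom}$ adjunction to reduce to $\mrm{R}\opn{Hom}_{\til A}(B,\opn{For}_f(M))\cong\mrm{R}\opn{Hom}_A(B,M)$, which it then verifies by constructing an explicit quasi-isomorphism between $\opn{Hom}$ complexes computed from K-injective resolutions, citing \cite[Proposition 2.6(2)]{Ye2}. Your Step~2 is the mirror image of your Step~1 and avoids the resolution argument entirely: you use the quasi-isomorphism $\mrm{L}u^*\til B\cong B$ supplied by the square, the same adjunction, and $\mrm{L}v^*v_*\cong 1$. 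This makes the two base cases uniform and avoids the external citation, at the modest cost of the module-structure bookkeeping you flag at the end. That bookkeeping, namely that the quasi-isomorphism $\mrm{L}u^*\til B\to B$ sits over $v:\til B\to B$ so that the adjunction identification is $\til B$-linear in the first case and $B$-linear in the second, is exactly the point that needs to be checked, and your account of it is adequate. The induction in Step~3 is the same as in the paper.
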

\begin{proof}
Step 1: Assume first that $f:A\iso \til{A}$ and $g:B \iso \til{B}$ are single quasi-isomorphisms.
Unwrapping the definitions of $\eqfun$, we must show that there is a functorial isomorphism
\[
\opn{For}_g ( \mrm{R}\opn{Hom}_{\til{A}}(\til{B},-\otimes^{\mrm{L}}_A \til{A})) \cong \mrm{R}\opn{Hom}_A(B,-).
\]
By the last isomorphism of Proposition \ref{prop:tensor-with-forget}, 
given $M \in \cat{D}(A)$, we have a natural isomorphism
\[
M\otimes^{\mrm{L}}_A \til{A} \cong \mrm{R}\opn{Hom}_A(\til{A},\opn{For}_f(M\otimes^{\mrm{L}}_A \til{A})) \cong \mrm{R}\opn{Hom}_A(\til{A},M)
\]
in $\cat{D}(\til{A})$.
Hence, using adjunction,
\begin{eqnarray}
\opn{For}_g ( \mrm{R}\opn{Hom}_{\til{A}}(\til{B},M\otimes^{\mrm{L}}_A \til{A})) \cong\nonumber\\
\opn{For}_g ( \mrm{R}\opn{Hom}_{\til{A}}(\til{B},\mrm{R}\opn{Hom}_A(\til{A},M)))\cong\nonumber\\
\opn{For}_g( \mrm{R}\opn{Hom}_A(\til{B},M)) = \mrm{R}\opn{Hom}_A(B,M).\nonumber
\end{eqnarray}
Step 2:
Assume now that $f:\til{A} \iso A$ and $g:\til{B} \iso B$ are single quasi-isomorphisms in the other direction.
We must show that
\[
B\otimes^{\mrm{L}}_{\til{B}} \mrm{R}\opn{Hom}_{\til{A}}(\til{B},\opn{For}_f(M)) \cong \mrm{R}\opn{Hom}_A(B,M).
\]
As in step 1, we have a functorial isomorphism
\begin{eqnarray}
B\otimes^{\mrm{L}}_{\til{B}} \mrm{R}\opn{Hom}_{\til{A}}(\til{B},\opn{For}_f(M)) \cong \nonumber\\
\mrm{R}\opn{Hom}_{\til{B}}(B, \mrm{R}\opn{Hom}_{\til{A}}(\til{B},\opn{For}_f(M))) \cong\nonumber\\
\mrm{R}\opn{Hom}_{\til{A}}(B,\opn{For}_f(M)).\nonumber
\end{eqnarray}
Taking K-injective resolutions $\opn{For}_f(M) \iso \til{I}$ and $M \iso I$, 
it is thus enough to show that there is a $B$-linear quasi-isomorphism
\[
\opn{Hom}_A(B,I) \to \opn{Hom}_{\til{A}}(B,\til{A}),
\]
but as $I$ and $\til{I}$ are isomorphic in $\cat{D}(\til{A})$, 
there is a quasi-isomorphism $I \to \til{I}$ in $\cat{D}(\til{A})$,
so this follows from \cite[Proposition 2.6(2)]{Ye2}. 
Step 3: again, the general case follows immediately by induction.
\end{proof}

Having established that the main constructions of this paper are preserved along homological equivalences, 
we now show that objects of $\thecat$ are homologically equivalent to nice objects. More precisely:

\begin{prop}\label{prop:res} 
Let $\K$ be a noetherian ring, and let $A\in \thecat$. Then there is a DG-ring $\til{A} \in \thecat$ which satisfies the following:
\begin{enumerate}
\item The DG-rings $A$ and $\til{A}$ are homologically equivalent over $\K$.
\item One has $\til{A}^0 = S^{-1}(\K[x_1,\dots,x_n])$, for some $n \in \mathbb{N}$, and for some multiplicatively closed set $S \subseteq \K[x_1,\dots,x_n]$.
\item For each $i<0$, the $\til{A}^0$-module $\til{A}^i$ is finitely generated and free. 
\item In particular, $\til{A}$ is K-projective over $\K$.
\end{enumerate}
\end{prop}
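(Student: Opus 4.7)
The plan is to realize $\til{A}$ through a length-two zig-zag $A \xrightarrow{\simeq} S_A^{-1}A \xleftarrow{\simeq} \til{A}$, where the first map is a suitable localization of $A$ and the second is a semi-free resolution built over a localized polynomial $\K$-algebra. Since $\bar{A} = H^0(A)$ is essentially of finite type over $\K$, I would first fix a surjection $\pi : S^{-1}\K[x_1,\dots,x_n] \twoheadrightarrow \bar{A}$ for some $n$ and some multiplicatively closed $S \subseteq \K[x_1,\dots,x_n]$. Lift $\pi(x_i)$ to elements $a_i \in A^0$ (possible since $A^0 \twoheadrightarrow \bar{A}$ is surjective), and let $S_A \subseteq A^0$ be the multiplicatively closed set generated by the images of $S$ under the $\K$-algebra map $\K[x_1,\dots,x_n] \to A^0$ sending $x_i \mapsto a_i$. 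The DG-ring localization $A \to S_A^{-1}A$, with $(S_A^{-1}A)^i := S_A^{-1}A^i$, is a $\K$-linear quasi-isomorphism: localization is exact, so $H^i(S_A^{-1}A) = S_A^{-1}H^i(A)$, and each $\bar{A}$-module $H^i(A)$ is unchanged by localizing at $S_A$, whose image in $\bar{A}$ consists of units by construction of $\pi$.

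Next, set $\til{A}^0 := S^{-1}\K[x_1,\dots,x_n]$, which is noetherian since $\K$ is, and note that the universal property of localization lifts the composition $\K[x_1,\dots,x_n] \to A^0 \to S_A^{-1}A^0$ to a $\K$-algebra map $\til{A}^0 \to (S_A^{-1}A)^0$ inducing $\pi$ on $H^0$. Build $\til{A}$ inductively over $\til{A}^0$ by the standard Hinich-type semi-free construction. At stage $-k$, having already arranged $\til{A}_{k-1} \to S_A^{-1}A$ to be a quasi-isomorphism in degrees $>-k$, adjoin finitely many graded-commutative free generators in degree $-k$: some whose differentials generate the (finitely generated) kernel of $H^{-k+1}(\til{A}_{k-1}) \to H^{-k+1}(S_A^{-1}A)$, and closed ones whose images realize a finite set of $\bar{A}$-module generators of $H^{-k}(S_A^{-1}A) \cong H^{-k}(A)$. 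Finiteness at every stage rests on the noetherianness of $\til{A}^0$ together with the cohomologically noetherian hypothesis on $A$. Since the graded-commutative $\til{A}^0$-algebra on finitely many negative-degree generators is a finitely generated free $\til{A}^0$-module in every internal degree, property (3) is preserved at each stage, and the union $\til{A} := \bigcup_k \til{A}_k$ carries a $\K$-linear quasi-isomorphism $\til{A} \to S_A^{-1}A$, giving the second leg of the zig-zag in (1).

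Properties (2) and (3) are then immediate from the construction, and (4) follows because $\til{A}^0$ is flat over $\K$ and each $\til{A}^i$ is free over $\til{A}^0$, so the filtration of $\til{A}$ by stage of adjunction presents it as a bounded-above semi-free DG-module whose associated graded pieces are $\K$-flat, in the form of K-projectivity used throughout the paper. The main technical care lies in the finiteness bookkeeping of the semi-free step: one must verify inductively that each cohomology module to be killed or attained is finitely generated over $\bar{A}$, which hinges on the Hilbert basis theorem for $\til{A}^0$ combined with the cohomologically noetherian hypothesis on $A$ propagated across the quasi-isomorphism $A \simeq S_A^{-1}A$.
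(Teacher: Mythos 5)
Your proposal is correct and follows the same route as the paper, whose own proof is simply a citation to \cite[Lemma 7.8]{Ye1}; Yekutieli constructs there precisely the two-leg zig-zag $A \to A_{\opn{loc}} \leftarrow A_{\opn{eft}}$ you describe (localize so that $H^0$ is unchanged, then resolve semi-freely over a localized polynomial $\K$-algebra), and your argument essentially unfolds the content of that lemma. One small caveat on part (4): what your construction directly yields is K-\emph{flatness} over $\K$ --- since $\til{A}^0 = S^{-1}\K[x_1,\dots,x_n]$ is flat but in general not projective over $\K$, the bounded-above complex $\til{A}$ of flat $\K$-modules is K-flat, and this is in fact the property the paper invokes later, e.g.\ in the proofs of Theorems \ref{thm:finite} and \ref{thm:tensor-of-dualizing}.
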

\begin{proof}
In the proof of \cite[Lemma 7.8]{Ye1}, the author constructs DG-rings $A_{\opn{loc}}$ and $A_{\opn{eft}}$ with $\K$-linear quasi-isomorphisms
\[
A \rightarrow A_{\opn{loc}} \leftarrow A_{\opn{eft}}.
\]
Setting $\til{A}:= A_{\opn{eft}}$, we see that $A$ is homologically equivalent to $\til{A}$ over $\K$, 
and it is easy to see that the DG-ring $\til{A}$ constructed there satisfies the above.
\end{proof}

We end this section with a couple of useful facts that use the above and will be useful in the sequel.
A map $f:A \to B$ of DG-rings is called cohomologically finite if the induced map $\bar{f}:\bar{A} \to \bar{B}$ is a finite ring map.

\begin{prop}
Let $\K$ be a noetherian ring which has dualizing complexes, and let $A \in \thecat$. Then $A$ has dualizing DG-modules.
\end{prop}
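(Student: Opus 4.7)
The plan is to reduce via Proposition \ref{prop:res} to a concretely structured homologically equivalent DG-ring $\til{A}$, construct a dualizing DG-module over $\til{A}$ by pulling back a classical dualizing complex from $\til{A}^0$, and then transfer the result back along the homological equivalence.

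First, I apply Proposition \ref{prop:res} to obtain $\til{A}\in\thecat$ homologically equivalent to $A$ over $\K$ via some $f:A\heqv\til{A}$, with $\til{A}^0 = S^{-1}\K[x_1,\dots,x_n]$ a localization of a polynomial ring over $\K$ and each $\til{A}^i$ a finitely generated free $\til{A}^0$-module. By Proposition \ref{prop:transfer}(2), dualizing DG-modules transfer along homological equivalences, so it suffices to produce one over $\til{A}$.

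Second, since $\K$ has a dualizing complex and $\til{A}^0$ is essentially of finite type over $\K$, classical Grothendieck duality together with Nayak's essentially finite type extension (both invoked at the start of Section 1) produces a dualizing complex $R^0$ over $\til{A}^0$. The inclusion $\til{A}^0 \hookrightarrow \til{A}$ induces on $H^0$ the canonical surjection $\til{A}^0 \twoheadrightarrow \bar{\til{A}}$, which is finite, so the map $\til{A}^0 \to \til{A}$ is cohomologically finite in the sense just recalled. The natural candidate is then
\[
\til{R} := \mrm{R}\opn{Hom}_{\til{A}^0}(\til{A},R^0),
\]
and that this is a dualizing DG-module over $\til{A}$ is the induction statement for dualizing DG-modules along cohomologically finite maps established by Yekutieli in \cite{Ye1}. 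Once this is granted, Proposition \ref{prop:transfer}(2) applied to $f$ yields $\eqfun_{f^{-1}}(\til{R})$ as a dualizing DG-module over $A$, completing the proof.

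The main obstacle, if the relevant statement from \cite{Ye1} cannot be invoked verbatim, is verifying the three axioms for $\til{R}$ directly. Boundedness and finite generation of $H(\til{R})$ over $\bar{\til{A}}$, together with finite injective dimension, follow from the corresponding properties of $R^0$ over $\til{A}^0$ combined with the fact that $H(\til{A})$ is bounded and finitely generated over $\bar{\til{A}}$, via the standard convergent spectral sequence $E^{p,q}_2 = \opn{Ext}^p_{\til{A}^0}(H^{-q}(\til{A}),R^0)\Rightarrow H^{p+q}(\til{R})$. The reflexivity $\til{A}\iso \mrm{R}\opn{Hom}_{\til{A}}(\til{R},\til{R})$ reduces by Hom-tensor adjunction to the reflexivity of $R^0$ over $\til{A}^0$, using that $\til{A}$ is a bounded-above complex of finitely generated free $\til{A}^0$-modules and is therefore K-projective over $\til{A}^0$.
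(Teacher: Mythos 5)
Your proof is correct and follows essentially the same route as the paper: reduce via Proposition \ref{prop:res} to the nicely-structured $\til{A}$, use Proposition \ref{prop:transfer}(2) to transfer, pull back a dualizing complex from $\til{A}^0$ along the cohomologically finite map $\til{A}^0\to\til{A}$, and invoke Yekutieli's result (the paper cites \cite[Proposition 7.5(1)]{Ye1}). The additional sketch of a direct verification via the spectral sequence and Hom-tensor adjunction is a reasonable fallback but not needed, as the paper simply cites \cite{Ye1}.
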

\begin{proof}
Let $\til{A}$ be the DG-ring which cohomologically equivalent to $A$ over $\K$, constructed in Proposition \ref{prop:res}. 
By Proposition \ref{prop:transfer}, it is enough to show that $\til{A}$ has dualizing DG-modules. 
The ring $\til{A}^0$ is essentially of finite type over $\K$, so it has dualizing complexes. 
The map $\til{A}^0 \to \til{A}$ is cohomologically finite. 
Hence, the result follows from \cite[Proposition 7.5(1)]{Ye1}.
\end{proof}

\begin{prop}\label{prop:ffd}
Let $\K$ be a Gorenstein noetherian ring of finite Krull dimension, let $A \in \thecat$, and let $R$ be a dualizing DG-module over $A$.
Then $R$ has finite flat dimension over $\K$.
\end{prop}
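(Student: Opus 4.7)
The plan is to reduce the problem, via Proposition \ref{prop:res}, to a DG-ring $\til A$ of a very special form, then reduce further to a single explicitly chosen dualizing DG-module over $\til A$, and finally to deduce finite flat dimension over $\K$ from finite injective dimension over $\til A^0$ using the Gorenstein hypothesis on $\K$.

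First, by Proposition \ref{prop:res} choose a homological equivalence $f : A \heqv \til A$ over $\K$ with $\til A^0 = S^{-1}\K[x_1,\dots,x_n]$ (hence essentially smooth, in particular flat and Gorenstein of finite Krull dimension, over $\K$), each $\til A^i$ for $i<0$ a finitely generated free $\til A^0$-module, and $\til A$ K-projective over both $\til A^0$ and $\K$. Set $\til R := \eqfun_f(R)$; by Proposition \ref{prop:transfer}(2) this is dualizing over $\til A$, and since every quasi-isomorphism in the zig-zag is $\K$-linear, $R$ and $\til R$ are isomorphic in $\cat D(\K)$. So it suffices to show $\til R$ has finite flat dimension over $\K$.

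Next, because $\til A^0$ is Gorenstein of finite Krull dimension, $\til A^0$ is itself a dualizing complex over $\til A^0$; as the inclusion $\til A^0 \hookrightarrow \til A$ is cohomologically finite, \cite[Proposition 7.5(1)]{Ye1} yields that $R_0 := \mrm R\opn{Hom}_{\til A^0}(\til A,\, \til A^0)$ is dualizing over $\til A$. By \cite[Theorem 7.10]{Ye1} we have $\til R \cong R_0 \otimes^{\mrm L}_{\til A} P$ for some tilting DG-module $P$; since $P$ is, locally on $\opn{Spec}\bar{\til A}$, a shift of $\til A$, the functor $-\otimes^{\mrm L}_{\til A} P$ is an auto-equivalence of $\cat D(\til A)$ preserving cohomological amplitude up to a bounded shift. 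Combined with the rearrangement
\[
\til R \otimes^{\mrm L}_{\K} M \;\cong\; (R_0 \otimes^{\mrm L}_{\K} M) \otimes^{\mrm L}_{\til A} P,
\]
this reduces the claim to showing that $R_0$ has finite flat dimension over $\K$.

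For $R_0$, I first establish that it has finite injective dimension over $\til A^0$. Because $\til A$ is K-projective over $\til A^0$, tensor-Hom adjunction gives, for $M \in \cat D^{\mrm b}(\til A^0)$,
\[
\mrm R\opn{Hom}_{\til A^0}(M, R_0) \;\cong\; \mrm R\opn{Hom}_{\til A^0}\bigl(M \otimes^{\mrm L}_{\til A^0} \til A,\; \til A^0\bigr);
\]
since $\til A$ is K-flat over $\til A^0$ with bounded cohomology, and $\til A^0$ has finite self-injective dimension, both functors preserve boundedness with uniform shifts, so $R_0$ has finite injective dimension over $\til A^0$. I then transfer to $\K$: flatness of $\til A^0$ over $\K$ implies that injective $\til A^0$-modules are injective as $\K$-modules, so $R_0$ is quasi-isomorphic to a bounded complex $I^\bullet$ of injective $\K$-modules. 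Since $\K$ is Gorenstein of finite Krull dimension $d$, each injective $\K$-module is a direct sum of indecomposables $E_\K(\K/\mfrak p)$ and therefore has flat dimension $\opn{ht}(\mfrak p) \le d$; hence $R_0$ has flat dimension over $\K$ bounded by $d + \opn{length}(I^\bullet)$, which is finite.

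The main obstacle is this last step, the bridge from finite injective dimension over $\til A^0$ to finite flat dimension over $\K$: it relies essentially on both the flatness of $\til A^0$ over $\K$ (to carry injectivity downward) and the Gorenstein-plus-finite-Krull-dimension hypothesis on $\K$ (to bound the flat dimension of injective $\K$-modules). Without either of these two inputs, the argument would collapse.
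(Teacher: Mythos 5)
The overall scaffolding of your argument matches the paper's: reduce via Proposition \ref{prop:res} to the nice model $\til A$, note that any dualizing DG-module differs from a fixed one by a tilting DG-module (hence perfect, hence of finite flat dimension), and thus reduce to a single explicitly chosen dualizing DG-module. You pick $R_0 := \mrm R\opn{Hom}_{\til A^0}(\til A,\til A^0)$, while the paper picks $S' := \mrm R\opn{Hom}_{\til A^0}(\til A,\omega)$ with $\omega = (f^0)^{!}(\K)$; since $\til A^0$ is essentially smooth over $\K$ these two differ by a tilting complex, so the choice is immaterial.

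Where you diverge is in \emph{how} to show that the chosen DG-module has finite flat dimension over $\K$. The paper invokes \cite[Theorem 1.2(1)]{AILN} as a black box: $\til A$ lies in $\mathsf{P}(f^0)$ (bounded finitely-generated cohomology and finite flat dimension over $\K$), and the AILN reflexivity theorem says that dualizing against the relative dualizing complex preserves $\mathsf{P}(f^0)$. You instead try to establish finite injective dimension of $R_0$ over $\til A^0$ by hand, then transfer to $\K$ using flatness of $\til A^0$ over $\K$ and the Gorenstein hypothesis on $\K$. That second transfer step is sound: injectives restrict to injectives along flat maps, and over a Gorenstein ring of finite Krull dimension injective modules have uniformly bounded flat dimension. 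But the first step has a genuine gap.

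Specifically, you assert that because $\til A$ is K-flat over $\til A^0$ with bounded cohomology, the functor $-\otimes^{\mrm L}_{\til A^0}\til A$ preserves boundedness with uniform shifts. That implication is false in general: a K-flat (even K-projective) complex with bounded cohomology can still have \emph{infinite} flat dimension, and then tensoring with it does not preserve boundedness uniformly --- think of a semi-free resolution of a module of infinite projective dimension over a non-regular Gorenstein ring such as $\til A^0 = S^{-1}\K[x_1,\dots,x_n]$ when $\K$ is not regular. What you actually need is that $\til A$ has \emph{finite flat dimension over $\til A^0$}. This is true here, but it is not a formal consequence of K-flatness and bounded cohomology; it requires an argument using the hypothesis $\til A\in\thecat$ (finite flat dimension over $\K$) together with the fact that $\til A^0$ is flat over $\K$ with \emph{regular} closed fibers. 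Concretely, one localizes at a maximal ideal $\mfrak m\subset\til A^0$ with contraction $\mfrak q\subset\K$, writes $\til A\otimes^{\mrm L}_{\til A^0} k(\mfrak m)\cong\bigl(\til A\otimes^{\mrm L}_{\K} k(\mfrak q)\bigr)\otimes^{\mrm L}_{F} k(\mfrak m)$ with $F$ the (regular, essentially of finite type) fiber ring, and uses the bound on Tor-amplitude over $\K$ plus regularity of $F$ to get a uniform bound. Without this intermediate lemma your claim that $R_0$ has finite injective dimension over $\til A^0$ is unjustified, so the proof as written does not close. Adding the lemma would repair it, but this is essentially the content that AILN's reflexivity theorem packages for you, which is why the paper's route is cleaner.
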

\begin{proof}
Let $\til{A}$ be the DG-ring which is cohomologically equivalent to $A$ over $\K$, constructed in Proposition \ref{prop:res}.
By Proposition \ref{prop:transfer}, there is a dualizing DG-module $S$ over $\til{A}$, which is isomorphic to $R$ in $\cat{D}(\K)$, 
so it is enough to show that $S$ has finite flat dimension over $\K$. Denote by $f$ the structure map $f:\K \to \til{A}$, and let
\[
\omega := (f^0)^!(\K),
\]
where $f^0:\K\to \til{A}^0$ is the zero component of $f$. 
Since $\K$ is a dualizing complex over itself, $\omega$ is a dualizing complex over $\til{A}^0$. 
Moreover, in the terminology of \cite[Section 1]{AILN}, it is the relative dualizing complex of $f^0$. 
By our assumptions, $\til{A}$ has bounded cohomology, and for each $i$, $H^i(\til{A})$ is finitely generated over $H^0(\til{A})$, 
so a fortiori, it is finitely generated over $\til{A}^0$. Thus, $\til{A} \in \cat{D}^{\mrm{b}}_{\mrm{f}}(\til{A}^0)$. 
Further, we assumed that $\til{A}$ has finite flat dimension over $\K$. 
Hence, in the terminology of \cite[Section 1]{AILN}, we have that $\til{A} \in \mathsf{P}(f^0)$. 
By \cite[Theorem 1.2(1)]{AILN}, this implies that
\[
S' := \mrm{R}\opn{Hom}_{\til{A}^0}(\til{A},\omega) \in \mathsf{P}(f^0),
\]
and in particular $S'$ has finite flat dimension over $\K$.
The map $\til{A}^0 \to \til{A}$ is clearly cohomologically finite, so by \cite[Proposition 7.5(1)]{Ye1}, 
$S'$ is a dualizing DG-module over $\til{A}$. 
Hence, by \cite[Theorem 7.10(2)]{Ye1}, there is some tilting DG-module $P$ over $\til{A}$, such that 
\[
S \cong S' \otimes^{\mrm{L}}_{\til{A}} P.
\]
By \cite[Theorem 6.5]{Ye1} and \cite[Theorem 5.11(iii)]{Ye1}, 
$P$ has finite flat dimension over $\til{A}$, 
so we get that $S$ (and hence $R$) also has finite flat dimension over $\K$.
\end{proof}

\newpage

\section{Rigid dualizing DG-modules and cohomologically finite maps}

Recall that a map $f:A\to B$ between two DG-rings is called cohomologically finite
if the induced map $\bar{f}:\bar{A} \to \bar{B}$ is a finite ring map.

\subsection{Splitting lemmas} 

Let $\K$ be a ring, let $A,B$ be $\K$-algebras, let $P,M \in \opn{Mod} A$, and let $Q,N \in \opn{Mod} B$. There is a map
\[
\opn{Hom}_A(P,M) \otimes_{\K} \opn{Hom}_B(Q,N) \to \opn{Hom}_{A\otimes_{\K} B}(P\otimes_{\K} Q,M\otimes_{\K} N),
\]
given by 
\[
\phi\otimes_{\K} \psi \mapsto ( (p\otimes_{\K} q) \mapsto (\phi(p)\otimes_{\K} \psi(q)).
\]
This map is clearly $A\otimes_{\K} B$-linear, and is functorial in $P,M,Q,N$. 
If we assume that $P$ and $Q$ are finitely generated and projective, then it is easy to see that it is an isomorphism. 
This short technical subsection is dedicated for studying derived versions of this natural isomorphism. 
Such isomorphisms will play an important role in the sequel.

Now, assume that $A$ and $B$ are DG-rings over $\K$ (but $\K$ is still a ring!), that  $P,M \in \opn{DGMod} A$, 
and that $Q,N \in \opn{DGMod} B$. Then it is clear that (up to signs) the same formula defines a map
\[
\opn{Hom}_A(P,M) \otimes_{\K} \opn{Hom}_B(Q,N) \to \opn{Hom}_{A\otimes_{\K} B}(P\otimes_{\K} Q,M\otimes_{\K} N),
\]
in $\opn{DGMod} A\otimes_{\K} B$, which again, is functorial in $P,M,Q,N$. 
If $P^{\natural} \cong (A^{\natural})^{\oplus r}$ and $Q^{\natural} \cong (B^{\natural})^{\oplus s}$, where $r,s \in \mathbb{N}$, then this map is an isomorphism.

\begin{lem}\label{lem:tensor-split}
Let $\K$ be a ring, let $A,B$ be cohomologically noetherian DG-rings over $\K$, and let $P,M \in \cat{D}(A)$  and $Q,N \in \cat{D}(B)$.
Assume that the following holds:
\begin{enumerate}
\item For each $i\le 0$, $A^i$ is flat over $\K$.
\item We have that $P \in \cat{D}^{-}_{\mrm{f}}(A)$, $Q \in \cat{D}^{-}_{\mrm{f}}(B)$, $M \in \cat{D}^{\mrm{b}}(A)$,
$N \in \cat{D}^{\mrm{b}}(B)$.
\item When considered as objects of $\cat{D}(\K)$, the complexes $M$ and $\mrm{R}\opn{Hom}_B(Q,N)$ have finite flat dimension over $\K$.
\end{enumerate}
Then there is an isomorphism
\[
\mrm{R}\opn{Hom}_A(P,M) \otimes^{\mrm{L}}_{\K} \mrm{R}\opn{Hom}_B(Q,N) \cong \mrm{R}\opn{Hom}_{A\otimes_{\K} B}(P\otimes^{\mrm{L}}_{\K} Q,M\otimes^{\mrm{L}}_{\K} N)
\]
in $\cat{D}(A\otimes_{\K} B)$, functorial in $P,M,Q,N$.
\end{lem}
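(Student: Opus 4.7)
The plan is to construct a natural transformation $\eta$ between the two sides using the chain-level formula given just before the statement of the lemma, and then verify that $\eta$ is an isomorphism by reducing to the trivial case $P = A$, $Q = B$ through two applications of the way-out lemma. For the construction, I would take K-projective resolutions $\til{P} \to P$ over $A$ and $\til{Q} \to Q$ over $B$, together with K-flat resolutions of $M$ and $N$ over $\K$. Hypothesis (1) ensures that $A$ is K-flat over $\K$, and hence $\til{P}$ is K-flat over $\K$, so $\til{P} \otimes_{\K} \til{Q}$ represents $P \otimes^{\mrm{L}}_{\K} Q$ in $\cat{D}(A\otimes_{\K} B)$ and $A \otimes_{\K} B \simeq A \otimes^{\mrm{L}}_{\K} B$. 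The chain-level formula then produces a morphism
\[
\eta : \mrm{R}\opn{Hom}_A(P,M) \otimes^{\mrm{L}}_{\K} \mrm{R}\opn{Hom}_B(Q,N) \to \mrm{R}\opn{Hom}_{A\otimes_{\K} B}(P\otimes^{\mrm{L}}_{\K} Q, M\otimes^{\mrm{L}}_{\K} N)
\]
in $\cat{D}(A\otimes_{\K} B)$ which is independent of the chosen resolutions and functorial in all four arguments.

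To prove $\eta$ is an isomorphism, fix $Q, M, N$ and view both sides as contravariant triangulated functors of $P \in \cat{D}^{-}_{\mrm{f}}(A)$. Both are way-out right in the sense of \cite[Proposition I.7.1]{RD}: for the left-hand side, $\mrm{R}\opn{Hom}_A(P,M)$ has $\inf H \ge \inf H(M) - \sup H(P)$ because $M \in \cat{D}^{\mrm{b}}(A)$, and tensoring with the fixed complex $\mrm{R}\opn{Hom}_B(Q,N)$ over $\K$ only shifts this by a constant thanks to hypothesis (3); for the right-hand side, $\sup H(P \otimes^{\mrm{L}}_{\K} Q) \le \sup H(P) + \sup H(Q)$, while $M \otimes^{\mrm{L}}_{\K} N$ is bounded (using hypothesis (3) on $M$ together with boundedness of $N$), so the outer $\mrm{R}\opn{Hom}$ is bounded below by a quantity controlled linearly by $\sup H(P)$. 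The way-out lemma therefore reduces the problem to the case $P = A$.

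When $P = A$, the left-hand side becomes $M \otimes^{\mrm{L}}_{\K} \mrm{R}\opn{Hom}_B(Q,N)$, while the right-hand side, via the adjunction along $B \to A\otimes_{\K} B$ and the identification $A\otimes_{\K} Q \cong (A\otimes_{\K} B)\otimes_B Q$, becomes $\mrm{R}\opn{Hom}_B(Q, M\otimes^{\mrm{L}}_{\K} N)$ with $M$ viewed as a $\K$-module via the forgetful functor. Both are now contravariant triangulated functors of $Q \in \cat{D}^{-}_{\mrm{f}}(B)$, way-out right by essentially the same kind of estimates, and at $Q = B$ they both reduce to $M \otimes^{\mrm{L}}_{\K} N$ with $\eta$ becoming the identity map. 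A second application of the way-out lemma completes the argument.

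I expect the main obstacle to be the construction of $\eta$ itself: one must simultaneously juggle K-projective resolutions over $A$ and $B$ with K-flat resolutions over $\K$ and verify that the chain-level formula descends to a canonical, functorial morphism in the derived category. Once this bookkeeping is in place, the two way-out reductions are straightforward, and the hypotheses (1)--(3) are exactly what is needed to make the boundedness estimates at each step go through.
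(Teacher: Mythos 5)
Your approach is genuinely different from the paper's, and it is essentially sound, but a few points deserve scrutiny. The paper proceeds by explicit resolution: it replaces $P$ and $Q$ by pseudo-finite semi-free resolutions $P_f, Q_f$ (whose underlying graded modules are countable direct sums of shifts of $A^{\natural}$, resp.\ $B^{\natural}$), replaces $M$ by a \emph{bounded} truncation $M''$ of a semi-free $A$-resolution (bounded thanks to hypotheses (1) and (3) via \cite[Theorem 2.4.F]{AF}), observes that the chain-level map $\opn{Hom}_A(P_f,M'')\otimes_{\K}\opn{Hom}_B(Q_f,N)\to\opn{Hom}_{A\otimes_{\K}B}(P_f\otimes_{\K}Q_f,M''\otimes_{\K}N)$ is an honest isomorphism because of the finiteness of the pieces, and then invokes \cite[Lemma 5.4]{YZ1} to replace $\otimes^{\mrm{L}}_{\K}$ by $\otimes_{\K}$ in the left factor (here hypothesis (3) on $\mrm{R}\opn{Hom}_B(Q,N)$ is used). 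Your proposal instead builds the map abstractly and reduces to the trivial case $P=A$, $Q=B$ by a two-step way-out argument. This trades the explicit chain-level computation for formal machinery; both approaches use hypothesis (3) and the flatness of $A^i$ in the same essential places.

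Two cautionary remarks on your route. First, you cite Hartshorne's \cite[Proposition I.7.1]{RD}, but that statement is for functors on the derived category of an abelian category where one reduces to single objects of the abelian category, not to the DG-ring itself. In the DG setting the correct tool is the cohomological-displacement criterion of \cite[Theorem 2.11(1)]{Ye1}, which the present paper already uses for exactly this kind of reduction (in the proof of Proposition \ref{prop:tensor-with-ffd}); that theorem is what justifies checking only at $P=A$. Second, the construction of the global natural transformation $\eta$ is more delicate than a single sentence suggests: one needs to take K-projective resolutions of $P$ over $A$ and of $Q$ over $B$, and then additionally K-flatten one side over $\K$ before the chain map exists, and one must check the result is independent of choices and functorial. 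You correctly flag this as the main technical burden. Modulo these two clarifications your argument does go through, and the boundedness estimates you give (using hypothesis (3) on $\mrm{R}\opn{Hom}_B(Q,N)$ for the first way-out step, and on $M$ for the second once $P=A$) are exactly what makes the displacement bounds uniform.
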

\begin{proof}
Since $A$ and $B$ are cohomologically noetherian, using \cite[Proposition 1.18(2)]{Ye1}, we may replace $P$ and $Q$ by pseudo-finite semi-free resolutions $P_f\iso P$ and $Q_f\iso Q$. By \cite[Proposition 1.14(1)]{Ye1}, we have that
\[
P_f^{\natural} \cong \bigoplus_{i=-\infty}^{i_1} A^{\natural}[-i]^{\oplus r_i}   ,\quad
Q_f^{\natural} \cong \bigoplus_{j=-\infty}^{j_1} B^{\natural}[-j]^{\oplus s_j}, \quad r_i,s_j<\infty.
\]
This implies that $P_f \otimes_{\K} Q_f$ is also of this form, so using \cite[Proposition 1.14(1)]{Ye1} again, we see that it is a pseudo-finite semi-free resolution of $P\otimes^{\mrm{L}}_{\K} Q$. In particular, $P_f$ is K-projective over $A$, $Q_f$ is K-projective over $B$, and $P_f\otimes_{\K} Q_f$ is K-projective over $A\otimes_{\K} B$.

Next, replace $M$  by a semi-free resolution $M' \iso M$ over $A$. Because $A$ is made of flat $\K$-modules, $M'$ is a bounded above complex of flat modules, when considered as a complex over $\K$. By assumption, $M'$ has finite flat dimension over $\K$. Hence, by \cite[Theorem 2.4.F, (i) $\implies$ (vi)]{AF}, for $n$ small enough, the smart truncation map $M' \to \opn{smt}^{\ge n}(M)$, which is $A$-linear, is a quasi-isomorphism, and $M'' := \opn{smt}^{\ge n}(M)$ is a bounded DG-module over $A$, made of flat $\K$-modules. 

Because $P_f$ and $Q_f$ are pseudo-finite semi-free and $M''$ and $N$ are bounded, 
we deduce that the natural map
\begin{eqnarray}
\opn{Hom}_A(P_f,M'') \otimes_{\K} \opn{Hom}_B(Q_f,N) \to\label{eqn:same-without-der}\\
\opn{Hom}_{A\otimes_{\K} B}(P_f\otimes_{\K} Q_f,M'' \otimes_{\K} N).\nonumber
\end{eqnarray}
is an isomorphism.

Finally, notice that, when considered as complexes of $\K$-modules, $\opn{Hom}_A(P_f,M'')$ is a bounded below complex of flat $\K$-modules, and by assumption $\opn{Hom}_B(Q_f,N)$ has finite flat dimension over $\K$. Hence, by \cite[Lemma 5.4]{YZ1}, the natural map
\begin{eqnarray}
\opn{Hom}_A(P_f,M'') \otimes^{\mrm{L}}_{\K} \opn{Hom}_B(Q_f,N) \to\label{eqn:derived-not-needed}\\
\opn{Hom}_A(P_f,M'') \otimes_{\K} \opn{Hom}_B(Q_f,N)\nonumber
\end{eqnarray}
is a quasi-isomorphism. The result now follows from combining (\ref{eqn:same-without-der}) and (\ref{eqn:derived-not-needed}).
\end{proof}

We shall also need the following variation of the above.

\begin{lem}\label{lem:tensor-split-variation}
Let $\K$ be a coherent commutative ring (e.g, a noetherian ring), let $A,B,C,D$ be cohomologically noetherian DG-rings over $\K$, and let $M \in \cat{D}^{\mrm{b}}(A)$ and $N \in \cat{D}^{\mrm{b}}(B)$. Let $\varphi:A \to C$ and $\psi:B \to D$ be two  DG $\K$-ring homomorphisms.
Assume that the following holds:
\begin{enumerate}
\item For each $i\le 0$, $A^i$ is flat over $\K$.
\item The maps $\varphi,\psi$ are semi-free and cohomologically finite.
\item When considered as objects of $\cat{D}(\K)$, the complexes $M$ and $\mrm{R}\opn{Hom}_B(D,N)$ have finite flat dimension over $\K$.
\end{enumerate}
Then there is an isomorphism
\[
\mrm{R}\opn{Hom}_A(C,M) \otimes^{\mrm{L}}_{\K} \mrm{R}\opn{Hom}_B(D,N) \cong \mrm{R}\opn{Hom}_{A\otimes_{\K} B}(C\otimes_{\K} D,M\otimes^{\mrm{L}}_{\K} N)
\]
in $\cat{D}(C\otimes_{\K} D)$, functorial in $M,N$.
\end{lem}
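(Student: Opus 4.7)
My strategy is to reduce the statement directly to Lemma~\ref{lem:tensor-split}. Concretely, I would invoke Lemma~\ref{lem:tensor-split} with $P := C$, viewed as a DG-module over $A$ via $\varphi$, and $Q := D$, viewed as a DG-module over $B$ via $\psi$. Hypothesis (1) is identical in both lemmas. For hypothesis (2), the assumption that $C$ is cohomologically noetherian gives $H(C)$ bounded, while cohomological finiteness of $\varphi$ says $\bar{C}$ is a finite $\bar{A}$-module, so each $H^i(C)$ is finitely generated over $\bar{A}$. Hence $C \in \cat{D}^{\mrm{b}}_{\mrm{f}}(A) \subseteq \cat{D}^{-}_{\mrm{f}}(A)$, and similarly $D \in \cat{D}^{-}_{\mrm{f}}(B)$. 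Hypothesis (3) is exactly hypothesis (3) here.

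Lemma~\ref{lem:tensor-split} then yields a functorial isomorphism
\[
\mrm{R}\opn{Hom}_A(C,M) \otimes^{\mrm{L}}_{\K} \mrm{R}\opn{Hom}_B(D,N) \cong \mrm{R}\opn{Hom}_{A\otimes_{\K} B}\bigl(C \otimes^{\mrm{L}}_{\K} D,\, M \otimes^{\mrm{L}}_{\K} N\bigr)
\]
in $\cat{D}(A \otimes_{\K} B)$. It only remains to replace $C \otimes^{\mrm{L}}_{\K} D$ by $C \otimes_{\K} D$ on the right-hand side, which amounts to showing that $C$ is K-flat over $\K$.

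This is where the semi-freeness assumption on $\varphi$ enters. Semi-freeness of $\varphi$ as a map of non-positive graded-commutative DG-algebras means that $C^{\natural}$ is a free graded-commutative $A^{\natural}$-algebra on some collection of generators, and in particular $C^{\natural}$ is a free graded $A^{\natural}$-module; so $C$ is K-flat (indeed semi-free) as a DG-module over $A$. On the other hand, $A$ is non-positive with each $A^i$ flat over $\K$, so $A$ is a bounded-above complex of flat $\K$-modules and hence K-flat over $\K$ by Spaltenstein. Composing, $C$ is K-flat over $\K$, so the natural map $C \otimes^{\mrm{L}}_{\K} D \to C \otimes_{\K} D$ is an isomorphism in $\cat{D}(A \otimes_{\K} B)$. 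Substituting this identification into the previous display completes the proof.

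The only delicate point I anticipate is checking K-flatness of $C$ over $\K$ from semi-freeness of $\varphi$; this is standard but does depend on the precise definition of semi-free DG-algebra map in force. Observe that the coherent hypothesis on $\K$ is not directly used in this reduction; it is inherited from Lemma~\ref{lem:tensor-split}, where it enters via the construction of pseudo-finite semi-free resolutions and the comparison of derived and underived tensor products of Homs.
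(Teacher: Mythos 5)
Your reduction to Lemma~\ref{lem:tensor-split} produces an isomorphism in $\cat{D}(A \otimes_{\K} B)$, while the statement of Lemma~\ref{lem:tensor-split-variation} asserts an isomorphism in $\cat{D}(C \otimes_{\K} D)$. This is not a cosmetic discrepancy: in the proof of Lemma~\ref{lem:tensor-split}, the isomorphism is built by replacing $P$ and $Q$ by pseudo-finite semi-free resolutions \emph{as DG-modules}, and comparing $\opn{Hom}$-complexes of those resolutions. Since the resolutions carry no multiplicative structure, nothing in that construction endows the resulting isomorphism with $C\otimes_{\K} D$-linearity. Even after you replace $C\otimes^{\mrm{L}}_{\K} D$ by $C\otimes_{\K} D$ (which is correct: the multiplicative semi-freeness of $\varphi$ together with hypothesis (1) does give K-flatness of $C$ over $\K$), you have only matched the two sides as objects of $\cat{D}(A\otimes_{\K} B)$. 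The entire point of the variation lemma is that one must work with $C$ and $D$ themselves (using their DG-algebra structure) rather than with DG-module resolutions of them, precisely to keep all the maps $C\otimes_{\K} D$-linear.

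This is also where the coherence hypothesis actually enters, and your last paragraph misattributes it: Lemma~\ref{lem:tensor-split} is stated for an arbitrary ring $\K$ and makes no coherence assumption. In the paper's proof of Lemma~\ref{lem:tensor-split-variation}, $C$ is semi-free over $A$ but not pseudo-finite, so $\opn{Hom}_A(C,M'')$ is degreewise a (possibly infinite) direct product of flat $\K$-modules. Coherence of $\K$ is invoked exactly to conclude that such products are still flat, so that the comparison between the derived and underived tensor product can be carried out for $\opn{Hom}_A(C,M'')$ rather than for $\opn{Hom}_A(P_f,M'')$. You would then compare against the pseudo-finite resolutions via a diagram to see the $C\otimes_{\K} D$-linear map is a quasi-isomorphism. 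Your proposal skips this $C\otimes_{\K} D$-linear construction entirely, so the stated conclusion does not follow.
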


\begin{proof}
Replace $M$ by $M''$ as in the proof of Lemma \ref{lem:tensor-split}.
Note that since $C$ is semi-free over $A$, 
it is K-projective over $A$, 
so that
\[
\mrm{R}\opn{Hom}_A(C,M) \cong \opn{Hom}_A(C,M) \cong \opn{Hom}_A(C,M'').
\]
Because $C$ is semi-free over $A$, 
the complex of $\K$-modules $\opn{Hom}_A(C,M'')$
is a bounded below complex made of (possibly infinite) direct product of flat $\K$-modules.
Since $\K$ is coherent, an arbitrary direct product of flats is flat, 
so $\opn{Hom}_A(C,M'')$ is a bounded below complex of flat $\K$-modules.
Hence, as in the proof of Lemma \ref{lem:tensor-split}, 
the canonical $C\otimes_{\K} D$-linear map 
\[
\mrm{R}\opn{Hom}_A(C,M'') \otimes^{\mrm{L}}_{\K} \mrm{R}\opn{Hom}_B(D,N) \to \mrm{R}\opn{Hom}_A(C,M'') \otimes_{\K} \mrm{R}\opn{Hom}_B(D,N)
\]
is a quasi-isomorphism.

The cohomological finiteness assumptions on $A \to C$ and $B \to D$ 
and the fact that $C$ and $D$ are cohomologically noetherian,
imply that $C \in \cat{D}^{-}_{\mrm{f}}(A)$,
and $D \in \cat{D}^{-}_{\mrm{f}}(B)$.
Using \cite[Proposition 1.18(2)]{Ye1},
let $A \to P \iso C$ and $B \to Q \iso D$ be pseudo-finite semi-free resolutions 
in $\cat{D}(A)$ and $\cat{D}(B)$ respectively. 

Consider the following commutative diagram induced by these maps:
\tiny
\[
\xymatrix{
\opn{Hom}_A(C,M'') \otimes^{\mrm{L}}_{\K} \opn{Hom}_B(D,N) \ar[r]\ar[d] & \opn{Hom}_A(C,M'') \otimes_{\K} \opn{Hom}_B(D,N) \ar[r]\ar[d] &
\opn{Hom}_{A\otimes_{\K} B}(C\otimes_{\K} D,M''\otimes_{\K} N)\ar[d]\\
\opn{Hom}_A(P,M'') \otimes^{\mrm{L}}_{\K} \opn{Hom}_B(Q,N) \ar[r] & \opn{Hom}_A(P,M'') \otimes_{\K} \opn{Hom}_B(Q,N) \ar[r] &
\opn{Hom}_{A\otimes_{\K} B}(P\otimes_{\K} Q,M''\otimes_{\K} N)
}
\]
\normalsize

The leftmost vertical map is clearly a quasi-isomorphism.
Also, we have seen that the two leftmost horizontal maps are quasi-isomorphisms.
Hence, the middle vertical map is also a quasi-isomorphism.
The rightmost vertical map is also clearly a quasi-isomorphism, 
and because of finiteness of $P$ and $Q$, the right lower horizontal map is also a quasi-isomorphism.
Hence, we deduce that the right upper horizontal map is a quasi-isomorphism.
\newline
It follows that the two $C\otimes_{\K} D$-linear maps
\begin{eqnarray} 
\opn{Hom}_A(C,M'') \otimes^{\mrm{L}}_{\K} \opn{Hom}_B(D,N) \to\nonumber\\ \opn{Hom}_A(C,M'') \otimes_{\K} \opn{Hom}_B(D,N) \to\nonumber\\
\opn{Hom}_{A\otimes_{\K} B}(C\otimes_{\K} D,M''\otimes_{\K} N)\nonumber
\end{eqnarray}
are quasi-isomorphisms. This proves the claim.
\end{proof}

\begin{rem}
Let $\K$ be a coherent ring, and let $A,B,C,D$ be arbitrary cohomologically noetherian DG-rings over $\K$.
Let $A \to C$ and $B \to D$ be any cohomologically finite DG $\K$-ring homomorphisms. 
Then one can always choose resolutions 
\[
\K \to \til{A} \iso A, \quad \til{A} \to \til{C} \iso C, \quad B \to \til{D} \iso D,
\]
such that $\til{A},B,\til{C},\til{D}$ satisfy the conditions of the above lemma. 
Hence, the lemma essentially represents the fact that whenever $A \to C$ and $B \to D$ are cohomologically finite, 
under modest finiteness assumptions on $M,N$, there is a natural isomorphism
\[
\mrm{R}\opn{Hom}_A(C,M) \otimes^{\mrm{L}}_{\K} \mrm{R}\opn{Hom}_B(D,N) \cong \mrm{R}\opn{Hom}_{A\otimes^{\mrm{L}}_{\K} B}(C\otimes^{\mrm{L}}_{\K} D,M\otimes^{\mrm{L}}_{\K} N)
\]
as functors 
\[
\cat{D}^{\mrm{b}}(A) \times \cat{D}^{\mrm{b}}(B) \to \cat{D}(C\otimes^{\mrm{L}}_{\K} D).
\]
It is an open problem to the author if given a cohomologically finite DG $\K$-ring homomorphism $A \to C$, 
it can be resolved as $A \to \til{C} \iso C$, where $\til{C}$ is pseudo-finite semi-free over $A$. 
This is known if $C$ itself is a ring (\cite[Proposition 2.2.8]{Av} when both $A$ and $C$ are rings, and \cite[Proposition 1.7(3)]{YZ1} when $A$ is a DG-ring and $C$ is a ring),
but the proofs of this fact fail when $C \ne C^0$, 
thus we needed the workaround of the above lemma.
\end{rem}

\subsection{Cohomologically finite maps}

\begin{dfn}
Given a cohomologically finite map $f:A \to B$ between two cohomologically noetherian DG-rings, 
we denote by $f^{\flat}$ the functor
\[
f^{\flat}(-) := \mrm{R}\opn{Hom}_A(B,-) : \cat{D}(A) \to \cat{D}(B).
\]
\end{dfn}
For our next result, we shall need a technical refinement of Proposition \ref{prop:res}.

\begin{lem}\label{lem:var-res}
Let $\K$ be a noetherian ring, and let $A \in \thecat$.
Let $A\to B$ be a map between DG $\K$-rings, and consider the diagram
\[
A \rightarrow A_{\opn{loc}} \leftarrow A_{\opn{eft}} = \til{A}
\]
constructed in the proof of Proposition \ref{prop:res}.
Then there is a $\K$-linear quasi-isomorphism $B \to B_{\opn{loc}}$ making the diagram
\[
\xymatrix{
A \ar[r]\ar[d] & A_{\opn{loc}} \ar[d] & A_{\opn{eft}}\ar[l] & \\
B \ar[r] & B_{\opn{loc}}
}
\]
commutative.
\end{lem}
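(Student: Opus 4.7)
The plan is to construct $B_{\opn{loc}}$ as the analogous localization of $B$ induced by the localization that produced $A_{\opn{loc}}$ from $A$. Recall from the proof of Proposition~\ref{prop:res} (which follows the construction of \cite[Lemma 7.8]{Ye1}) that $A_{\opn{loc}}$ is defined as $S^{-1} A$ for a certain multiplicatively closed set $S \subseteq A^0$; the essential feature of $S$ is that its image $\bar S \subseteq \bar A$ consists of units in $\bar A$, and this is exactly what makes $A \to A_{\opn{loc}}$ a quasi-isomorphism. The plan is to transport this set along $A \to B$ and exploit the same mechanism on the $B$-side.

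First I would let $T \subseteq B^0$ denote the image of $S$ under the map $A^0 \to B^0$ induced by $A \to B$, and define $B_{\opn{loc}} := T^{-1} B$. This is a commutative non-positive DG $\K$-ring, and the canonical localization map $B \to B_{\opn{loc}}$ is a $\K$-linear homomorphism of DG-rings. To see it is a quasi-isomorphism, note that localization is exact, so for each $n$ one has
\[
H^n(B_{\opn{loc}}) \cong \bar T^{-1} H^n(B),
\]
where $\bar T$ denotes the image of $T$ in $\bar B$. Now $\bar T$ is the image of $\bar S$ under the ring map $\bar A \to \bar B$, so it consists of units in $\bar B$; and because each $H^n(B)$ is a $\bar B$-module, the elements of $\bar T$ already act invertibly on $H^n(B)$, whence the localization map is a quasi-isomorphism.

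For the commutativity of the square, I would invoke the universal property of $A_{\opn{loc}} = S^{-1} A$: the composition $A \to B \to B_{\opn{loc}}$ sends every $s \in S$ to an invertible element of $B_{\opn{loc}}$, so it factors uniquely through $A_{\opn{loc}}$ and produces the required $\K$-linear DG-ring map $A_{\opn{loc}} \to B_{\opn{loc}}$ fitting into the diagram. The only point requiring genuine attention, and the closest thing to an obstacle in this proof, is the assertion that $\bar S \subseteq (\bar A)^{\times}$; this property is already built into the construction of $A_{\opn{loc}}$ in Proposition~\ref{prop:res}, and it is precisely what is needed to transport along $\bar A \to \bar B$ so as to force $\bar T \subseteq (\bar B)^{\times}$ on the $B$-side.
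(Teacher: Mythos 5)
Your proposal is correct and takes essentially the same approach as the paper: your $T^{-1}B$ is canonically the same as the paper's $B\otimes_{A^0} S^{-1}A^0$, your exactness-of-localization argument parallels the paper's use of $S^{-1}A^0\otimes_{A^0}\bar B=\bar B$, and the commutativity of the square follows either from the universal property you invoke or from the paper's observation that all maps are induced by base change along $A^0\to S^{-1}A^0$.
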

\begin{proof}
The DG-ring $A_{\opn{loc}}$ was defined in \cite[Lemma 7.8]{Ye1} as follows: 
let $\pi_A: A\to \bar{A}$ be the canonical surjection, 
and let $S$ be the set of elements $s$ in $A^0$ such that $\pi_A(s)$ is invertible in $\bar{A}$.
It is clear that $S$ is multiplicatively closed, and one defines 
\[
A_{\opn{loc}} := A\otimes_{A^0} S^{-1}A^0.
\]
It is then easy to see that the map $A \to A_{\opn{loc}}$ induced by the localization map $A^0 \to S^{-1}A^0$
is a quasi-isomorphism, simply because $S^{-1}A^0 \otimes_{A^0} \bar{A} = \bar{A}$.
The map $A \to B$ is a map of DG-rings, so in particular, letting $\pi_B:B \to \bar{B}$ be the canonical surjection, 
we obtain a commutative diagram
\[
\xymatrix{
A \ar[r]^{\pi_A}\ar[d] & \bar{A}\ar[d]\\
B \ar[r]^{\pi_B} & \bar{B}
}
\]
But this implies that $S^{-1}A^0 \otimes_{A^0} \bar{B} = \bar{B}$, 
so letting $B_{\opn{loc}} := B\otimes_{A^0} S^{-1}A^0$, 
the map $B \to B_{\opn{loc}}$ induced by the localization map $A^0 \to S^{-1}A^0$
is again a quasi-isomorphism, and this proves the claim.
\end{proof}

Here is the main result of this section. 
It is a far reaching generalization of \cite[Theorem 5.3]{YZ1}.

\begin{thm}\label{thm:finite}
Let $\K$ be a noetherian ring, and let $f:A \to B$ be a cohomologically finite map in $\thecat$.
Let $(R_A,\rho)$ be a rigid DG-module over $A$ relative to $\K$, and assume that
\[
R_B := f^{\flat}(R_A)
\]
has finite flat dimension over $\K$.
Then $R_B$ has the structure of a rigid DG-module over $B$ relative to $\K$.
If moreover $(R_A,\rho)$ is a rigid dualizing DG-module over $A$ relative to $\K$, then
$R_B$ has the structure of a rigid dualizing DG-module over $B$ relative to $\K$.
\end{thm}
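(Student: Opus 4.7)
The plan is to construct a rigidifying isomorphism for $R_B$ from the given $\rho$ by a chain of standard adjunctions, the only non-formal step being the splitting lemma (Lemma \ref{lem:tensor-split-variation}), which allows one to commute $\otimes^{\mrm{L}}_{\K}$ across the two copies of $\mrm{R}\opn{Hom}_A(B,-)$. The remaining definitional conditions for $R_B$ to be rigid are straightforward: $R_B \in \cat{D}^{\mrm{b}}_{\mrm{f}}(B)$ is automatic from the cohomological finiteness of $f$ together with $R_A \in \cat{D}^{\mrm{b}}_{\mrm{f}}(A)$, while finite flat dimension of $R_B$ over $\K$ is hypothesized.

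Before carrying out the chain I would use Proposition \ref{prop:res} together with Lemma \ref{lem:var-res} to replace $f:A \to B$ by a $\K$-linearly homologically equivalent cohomologically finite map $\til f : \til A \to \til B$ such that $\til A$ has $\K$-flat components and $\til f$ admits a pseudo-finite semi-free factorization (using \cite[Proposition 1.18(2)]{Ye1} on the $\til A$-side). By Proposition \ref{prop:transfer}(3) and Proposition \ref{prop:transfer-finite}, rigidity is preserved by $\K$-linear homological equivalences and $f^{\flat}$ is intertwined with $\til f^{\flat}$ under $\eqfun$, so it suffices to prove the statement for $\til f$. In this situation, combining Proposition \ref{prop:ffd} (finite flat dimension of $R_A$ over $\K$) with the standing hypothesis on $R_B$, all three hypotheses of Lemma \ref{lem:tensor-split-variation} hold.

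The chain then reads
\begin{align*}
R_B &\xrightarrow{\mrm{R}\opn{Hom}_A(B,\rho)} \mrm{R}\opn{Hom}_A\bigl(B,\, \mrm{R}\opn{Hom}_{A\otimes^{\mrm{L}}_{\K} A}(A, R_A\otimes^{\mrm{L}}_{\K} R_A)\bigr) \\
&\cong \mrm{R}\opn{Hom}_{A\otimes^{\mrm{L}}_{\K} A}(B, R_A\otimes^{\mrm{L}}_{\K} R_A) \\
&\cong \mrm{R}\opn{Hom}_{B\otimes^{\mrm{L}}_{\K} B}\bigl(B,\, \mrm{R}\opn{Hom}_{A\otimes^{\mrm{L}}_{\K} A}(B\otimes^{\mrm{L}}_{\K} B, R_A\otimes^{\mrm{L}}_{\K} R_A)\bigr) \\
&\cong \mrm{R}\opn{Hom}_{B\otimes^{\mrm{L}}_{\K} B}(B, R_B\otimes^{\mrm{L}}_{\K} R_B),
\end{align*}
where the first isomorphism applies the rigidity $\rho$ of $R_A$, the second is the change-of-rings Hom--Hom adjunction for the multiplication map $A\otimes^{\mrm{L}}_{\K} A \to A$ applied to $B \in \cat{D}(A)$, the third is the same adjunction for the map $A\otimes^{\mrm{L}}_{\K} A \to B\otimes^{\mrm{L}}_{\K} B$ applied to $B$, and the last is Lemma \ref{lem:tensor-split-variation} (with both $C$'s equal to $B$, both $M$'s equal to $R_A$). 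The composite is the desired rigidity isomorphism for $R_B$. For the dualizing statement, \cite[Proposition 7.5(1)]{Ye1} gives at once that $R_B = \mrm{R}\opn{Hom}_A(B, R_A)$ is a dualizing DG-module over $B$ whenever $R_A$ is, so $R_B$ acquires the structure of a rigid dualizing DG-module.

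The main obstacle is the reduction step: one must arrange, within a single common homological equivalence, that $\til A$ has $\K$-flat components, that $\til f$ is cohomologically finite and semi-free, and that all the finiteness hypotheses of Lemma \ref{lem:tensor-split-variation} transport correctly. Without this it is unclear how to justify the splitting step, since a cohomologically finite DG-ring map cannot in general be resolved as a pseudo-finite semi-free map (cf.\ the remark after Lemma \ref{lem:tensor-split-variation}). Once this bookkeeping is done, Proposition \ref{prop:transfer} guarantees that the rigidifying isomorphism manufactured on the $\til f$-side descends to the required one for $f$.
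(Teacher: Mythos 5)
Your proposal is correct and follows essentially the same route as the paper: reduce via Proposition \ref{prop:res} and Lemma \ref{lem:var-res} to a situation with $\K$-flat components and a (multiplicative) semi-free resolution of the cohomologically finite map, apply Lemma \ref{lem:tensor-split-variation} together with the two adjunctions, and use Proposition \ref{prop:transfer}, Proposition \ref{prop:transfer-finite}, and \cite[Proposition 7.5(1)]{Ye1} to finish. Your chain of isomorphisms is the paper's chain read forward from $R_B$ rather than backward from the target.

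Two small corrections of justification, neither of which affects the validity of the argument. First, you appeal to Proposition \ref{prop:ffd} to get that $R_A$ has finite flat dimension over $\K$; but that proposition assumes $\K$ is Gorenstein of finite Krull dimension, whereas Theorem \ref{thm:finite} only assumes $\K$ noetherian. In fact no such appeal is needed: finite flat dimension of $R_A$ over $\K$ is part of the definition of a rigid DG-module. Second, in the reduction step you speak of a ``pseudo-finite semi-free factorization'' of $\til f$ citing \cite[Proposition 1.18(2)]{Ye1}, but that reference produces pseudo-finite semi-free DG-\emph{module} resolutions, whereas Lemma \ref{lem:tensor-split-variation} requires a multiplicative semi-free DG-\emph{ring} resolution of the cohomologically finite map; these are not the same, and indeed pseudo-finite multiplicative resolutions are an open problem as the remark after the lemma notes. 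You implicitly correct this in your final paragraph by saying ``$\til f$ is cohomologically finite and semi-free,'' which is the condition the lemma actually uses, and that condition is achievable — so your argument goes through, but the earlier phrasing and reference should be adjusted.
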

\begin{proof}
Let $\til{A}$ be the DG-ring which is homologically equivalent to $A$ constructed in Proposition \ref{prop:res}.
By Lemma \ref{lem:var-res}, there is a commutative diagram
\begin{equation}\label{eqn:diagram-in-finite}
\xymatrix{
A \ar[r]\ar[d] & A_{\opn{loc}} \ar[d] & \til{A}\ar[l] & \\
B \ar[r] & B_{\opn{loc}}
}
\end{equation}
such that all horizontal maps are quasi-isomorphisms, 
and for each $i\le 0$, $\til{A}^i$ is flat over $\K$.
Denote by $\eqfun_{\varphi}$ the equivalence of categories
\[
\eqfun_{\varphi} : \cat{D}(A) \to \cat{D}(\til{A})
\]
induced by this homological equivalence. 
Then by Proposition \ref{prop:transfer}, 
\[
R_{\til{A}} := \eqfun_{\varphi}(R_A)
\]
has the structure of a rigid DG-module over $\til{A}$ relative to $\K$.
Let us complete the diagram (\ref{eqn:diagram-in-finite}) to a commutative diagram
\[
\xymatrix{
A \ar[r]\ar[d] & A_{\opn{loc}} \ar[d] & \til{A}\ar[l]\ar[d] & \\
B \ar[r] & B_{\opn{loc}} & B_{\opn{loc}} \ar[l]
}
\]
where the right bottom map is the identity. 
Denoting by $\psi$ the homological equivalence $\psi:B \hequiv B_{\opn{loc}}$, 
we see that $\varphi$ and $\psi$ are homological equivalent in the sense of Definition \ref{dfn:equiv-of-morphisms}.

Notice that the map $\til{A} \to B_{\opn{loc}}$ in (\ref{eqn:diagram-in-finite}) is also cohomologically finite.
Let $\til{A} \to \til{B} \iso B_{\opn{loc}}$ be a semi-free resolution of $\til{A} \to B_{\opn{loc}}$.
In particular, the map $\til{A} \to \til{B}$ is also cohomologically finite.
Hence, the conditions of Lemma \ref{lem:tensor-split-variation} are satisfied, and there is a natural isomorphism
\[
\mrm{R}\opn{Hom}_{\til{A}}(\til{B},R_{\til{A}}) \otimes^{\mrm{L}}_{\K} \mrm{R}\opn{Hom}_{\til{A}}(\til{B},R_{\til{A}}) \cong
\mrm{R}\opn{Hom}_{\til{A} \otimes_{\K} \til{A}}(\til{B}\otimes_{\K} \til{B}, R_{\til{A}}\otimes^{\mrm{L}}_{\K} R_{\til{A}})
\]
in $\cat{D}(\til{B}\otimes_{\K} \til{B})$.

Because $\til{A}$ is K-flat over $\K$, and $\til{B}$ is K-projective over $\til{A}$, 
we see that the sequence of maps $\K \to \til{B} \iso B_{\opn{loc}}$ is a K-flat resolution of $\K \to B_{\opn{loc}}$.

Hence, using \cite[Theorem 6.15]{Ye2}, we may calculate:
\begin{eqnarray}
\mrm{R}\opn{Hom}_{B_{\opn{loc}} \otimes^{\mrm{L}}_{\K} B_{\opn{loc}}}
(B_{\opn{loc}}, \mrm{R}\opn{Hom}_{\til{A}}(B_{\opn{loc}},R_{\til{A}}) \otimes^{\mrm{L}}_{\K} \mrm{R}\opn{Hom}_{\til{A}}(B_{\opn{loc}},R_{\til{A}}) )\cong\nonumber\\
\mrm{R}\opn{Hom}_{\til{B} \otimes_{\K} \til{B}}
(B_{\opn{loc}}, \mrm{R}\opn{Hom}_{\til{A}}(\til{B},R_{\til{A}}) \otimes^{\mrm{L}}_{\K} \mrm{R}\opn{Hom}_{\til{A}}(\til{B},R_{\til{A}}) ) \cong \nonumber\\
\mrm{R}\opn{Hom}_{\til{B} \otimes_{\K} \til{B}}
(B_{\opn{loc}}, 
\mrm{R}\opn{Hom}_{\til{A} \otimes_{\K} \til{A}}(\til{B}\otimes_{\K} \til{B}, R_{\til{A}}\otimes^{\mrm{L}}_{\K} R_{\til{A}})
)\cong\nonumber\\
\mrm{R}\opn{Hom}_{\til{A} \otimes_{\K} \til{A}}(B_{\opn{loc}}, R_{\til{A}}\otimes^{\mrm{L}}_{\K} R_{\til{A}})\cong\nonumber
\mrm{R}\opn{Hom}_{\til{A}}(B_{\opn{loc}},\mrm{R}\opn{Hom}_{\til{A}\otimes_{\K} \til{A}} (\til{A},R_{\til{A}}\otimes^{\mrm{L}}_{\K} R_{\til{A}})),\nonumber
\end{eqnarray}
where the last two maps are adjunctions. 
Because $R_{\til{A}}$ is a rigid DG-module over $\til{A}$ relative to $\K$, there is an isomorphism
\[
\mrm{R}\opn{Hom}_{\til{A}}(B_{\opn{loc}},\mrm{R}\opn{Hom}_{\til{A}\otimes_{\K} \til{A}} (\til{A},R_{\til{A}}\otimes^{\mrm{L}}_{\K} R_{\til{A}})) \cong
\mrm{R}\opn{Hom}_{\til{A}}(B_{\opn{loc}},R_{\til{A}}),
\]
and since 
\[
R_B \cong \mrm{R}\opn{Hom}_{\til{A}}(B_{\opn{loc}},R_{\til{A}})
\]
in $\cat{D}(\K)$, so that the latter also has finite flat dimension over $\K$, 
we deduce that 
\[
\mrm{R}\opn{Hom}_{\til{A}}(B_{\opn{loc}},R_{\til{A}})
\]
is a rigid DG-module over $B_{\opn{loc}}$ relative to $\K$.
Hence, by Proposition \ref{prop:transfer},
\[
\eqfun_{\psi^{-1}} (\mrm{R}\opn{Hom}_{\til{A}}(B_{\opn{loc}},R_{\til{A}}))
\]
is a rigid DG-module over $B$ relative to $\K$, 
and by Proposition \ref{prop:transfer-finite}
\[
\eqfun_{\psi^{-1}} (\mrm{R}\opn{Hom}_{\til{A}}(B_{\opn{loc}},R_{\til{A}})) \cong \mrm{R}\opn{Hom}_A(B,R_A),
\]
which proves the result.
Finally, if in addition $R_A$ is a dualizing DG-module over $A$, 
then by \cite[Proposition 7.5(1)]{Ye1}, $R_B$ is a dualizing DG-module over $B$. This completes the proof.
\end{proof}

\begin{cor}\label{cor:finite}
Let $\K$ be a Gorenstein noetherian ring of finite Krull dimension.
\begin{enumerate}
\item For any $A \in \thecat$, there exists a rigid dualizing DG-module over $A$ relative to $\K$.
\item Given a cohomologically finite map $f:A \to B$ in $\thecat$, 
and given a rigid dualizing DG-module $R_A$ over $A$ relative to $\K$, 
\[
R_B := f^{\flat}(R_A)
\]
is a rigid dualizing DG-module over $B$ relative to $\K$.
\end{enumerate} 
\end{cor}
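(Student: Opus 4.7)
The plan is to derive the corollary as an essentially formal consequence of Theorem \ref{thm:finite}, Proposition \ref{prop:transfer}, and Proposition \ref{prop:ffd}, with part (2) reducing almost immediately to Theorem \ref{thm:finite} and part (1) requiring a reduction to the zero component of a well-chosen model.

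For part (2), assume part (1) has been proved, so a rigid dualizing DG-module $R_A$ over $A$ relative to $\K$ is in hand. Set $R_B := f^{\flat}(R_A) = \mrm{R}\opn{Hom}_A(B,R_A)$. Since $f:A\to B$ is cohomologically finite and $R_A$ is dualizing, \cite[Proposition 7.5(1)]{Ye1} yields that $R_B$ is a dualizing DG-module over $B$. Because $B\in \thecat$ and $\K$ is Gorenstein of finite Krull dimension, Proposition \ref{prop:ffd} guarantees that $R_B$ has finite flat dimension over $\K$. Theorem \ref{thm:finite} now applies and hands us the structure of a rigid dualizing DG-module on $R_B$.

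The substantive content is therefore part (1), which I would approach by reduction to the ordinary commutative case. Given $A\in\thecat$, invoke Proposition \ref{prop:res} to obtain $\til{A}\in\thecat$ homologically equivalent to $A$ over $\K$, with $\til{A}^0 = S^{-1}\K[x_1,\dots,x_n]$ and each $\til{A}^i$ a finitely generated free $\til{A}^0$-module. By Proposition \ref{prop:transfer}(2)(3), it suffices to produce a rigid dualizing DG-module over $\til{A}$; transferring back along $\eqfun_{\varphi^{-1}}$ then yields the required object over $A$. Now $\til{A}^0$ is flat and essentially of finite type over $\K$, so $\til{A}^0\in\thecat$, and by the classical theory of rigid dualizing complexes over commutative noetherian rings (as developed in \cite{AIL1,YZ1}), $\til{A}^0$ admits a rigid dualizing complex $R_0$ relative to $\K$. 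The inclusion $\til{A}^0\hookrightarrow \til{A}$ is cohomologically finite, so Theorem \ref{thm:finite} can be applied to it provided I verify that $\mrm{R}\opn{Hom}_{\til{A}^0}(\til{A},R_0)$ has finite flat dimension over $\K$; this is exactly the conclusion supplied by the argument already performed inside the proof of Proposition \ref{prop:ffd}, which invokes \cite[Theorem 1.2(1)]{AILN} to keep membership in $\mathsf{P}(f^0)$. Theorem \ref{thm:finite} then delivers a rigid dualizing DG-module over $\til{A}$ relative to $\K$.

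I do not anticipate a real obstacle in this corollary, since the genuine analytic work has been done in Theorem \ref{thm:finite}, Proposition \ref{prop:ffd}, and Proposition \ref{prop:res}. The only step that requires some attention is the identification of $R_0$: one must make sure the rigid dualizing complex used over $\til{A}^0$ is genuinely rigid relative to $\K$ in the sense of the present paper's definition (so that Theorem \ref{thm:finite} applies verbatim), which is transparent because on rings concentrated in degree zero the definition coincides with the classical one of Yekutieli--Zhang and Avramov--Iyengar--Lipman.
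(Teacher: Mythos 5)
Your proposal is correct and follows essentially the same route as the paper: transfer along the homological equivalence to the model $\til{A}$ from Proposition \ref{prop:res}, invoke the classical existence of rigid dualizing complexes over the ring $\til{A}^0$, and feed the cohomologically finite map $\til{A}^0\to\til{A}$ into Theorem \ref{thm:finite}, with the finite flat dimension hypothesis supplied by Proposition \ref{prop:ffd}. The one small inefficiency is in how you verify that hypothesis in part (1): rather than re-entering the internals of the proof of Proposition \ref{prop:ffd}, it is cleaner to note that $\mrm{R}\opn{Hom}_{\til{A}^0}(\til{A},R_0)$ is a dualizing DG-module over $\til{A}$ by \cite[Proposition 7.5(1)]{Ye1} (as $\til{A}^0\to\til{A}$ is cohomologically finite) and then apply Proposition \ref{prop:ffd} as a black box, exactly as you already do in part (2).
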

\begin{proof}
\leavevmode
\begin{enumerate}
\item Let $A \rightarrow A_{\opn{loc}} \leftarrow A_{\opn{eft}}$ be the $\K$-linear homological equivalence between
$A$ and $A_{\opn{eft}}$ constructed in Proposition \ref{prop:res}. 
By Proposition \ref{prop:transfer},
it is enough to show that there exists a rigid dualizing DG-module over $A_{\opn{eft}}$ relative to $\K$.
Note that 
\[
A_{\opn{eft}}^0 = S^{-1}(\K[x_1,\dots,x_n]),
\]
so in particular it is flat and essentially of finite type over $\K$.
Hence, by \cite[Theorem 8.5.6]{AIL1} (or by \cite[Theorem 3.6]{YZ2} if $\K$ is regular), 
there exists a rigid dualizing complex $R$ over $A_{\opn{eft}}^0$ relative to $\K$.
Set 
\[
R_A := \mrm{R}\opn{Hom}_{A_{\opn{eft}}^0}(A_{\opn{eft}},R).
\]
Because $A_{\opn{eft}}^0 \to A_{\opn{eft}}$ is cohomologically finite,
$R_A$ is a dualizing DG-module over $A_{\opn{eft}}$, 
so by Proposition \ref{prop:ffd}, it has finite flat dimension over $\K$. 
Hence, we may apply Theorem \ref{thm:finite} to the map $A_{\opn{eft}}^0 \to A_{\opn{eft}}$,
and deduce that $R_A$ is a rigid dualizing DG-module over $A_{\opn{eft}}$ relative to $\K$.
\item Using Theorem \ref{thm:finite}, all we need to show is that $R_B$ has finite flat dimension over $\K$.
But $R_B$ is dualizing over $B$, so this follows from Proposition \ref{prop:ffd}.
\end{enumerate}
\end{proof}

\section{Tensor product of dualizing DG-modules}

The aim of this section is to show that the property of being a dualizing DG-module is preserved under the derived tensor product operation. 
In addition to being an interesting result on its own, 
it will be cruical in the proof of Theorem \ref{thm:reduction} below.
In accordance with the general theme of this paper, we avoid making any flatness assumptions. 
Hence, for the constructions to make sense, we shall need the following definition, 
which one might view as the most general way to define the derived tensor product of two DG-rings over a DG-ring.

\begin{dfn}
Given a DG-ring $\K$, and two DG $\K$-rings $A,B$, 
we will say that a DG $\K$-ring $C$ represents $A\otimes^{\mrm{L}}_{\K} B$ 
if $C$ is homologically equivalent over $\K$ to a DG $\K$-ring of the form $\til{A}\otimes_{\K} \til{B}$,
where $\K \to \til{A} \iso A$ and $\K \to \til{B} \iso B$ are DG-resolutions of $\K \to A$ and $\K \to B$ respectively, 
and at least one of the maps $\K \to \til{A}$ or $\K \to \til{B}$ is K-flat.
\end{dfn}

\begin{rem}
Given a DG-ring $\K$, two DG $\K$-rings $A,B$, 
and a DG $\K$-ring $C$ that represents $A\otimes^{\mrm{L}}_{\K} B$, the functor
\[
-\otimes^{\mrm{L}}_{\K} - : \cat{D}(A)\times \cat{D}(B) \to \cat{D}(C)
\]
is well defined. It can be defined as follows.
Assume $C$ is homologically equivalent via some $f$ over $\K$ to $\til{A}\otimes_{\K} \til{B}$, 
and without loss of generality suppose that $\til{A}$ is K-flat over $\K$.
Denote by $\varphi$ and $\psi$ the quasi-isomorphisms 
$\til{A} \xrightarrow{\varphi} A$ and $\til{B} \xrightarrow{\psi} B$.
Given $M \in \cat{D}(A)$ and $N \in \cat{D}(B)$, 
let $P \iso \opn{For}_{\varphi}(M)$ be a K-flat resolution in $\cat{D}(\til{A})$, and define
\[
\cat{D}(C) \ni M \otimes^{\mrm{L}}_{\K} N := \eqfun_{f^{-1}} ( P\otimes_{\K} \opn{For}_{\psi}(N) ).
\]
Here, $\eqfun_{f^{-1}}$ is the equivalence of categories $\cat{D}(\til{A}\otimes_{\K} \til{B}) \to \cat{D}(C)$ that was defined in the discussion preceding Proposition \ref{prop:transfer}.
\end{rem}

\begin{lem}\label{lem:box-tensor-of-tilting}
Let $A \to B$ and $A \to C$ be homomorphisms of DG rings,
with $A \to C$ being K-flat. 
Given a tilting DG-module $L$ over $B$, 
and a tilting DG-module $N$ over $C$, 
the DG-module $L\otimes^{\mrm{L}}_{A} N$ is a tilting DG-module over $B\otimes_{A} C$.
\end{lem}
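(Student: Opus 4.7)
The plan is to construct an explicit tensor-inverse. Since $L$ is tilting over $B$, choose $L' \in \cat{D}(B)$ with $L \otimes^{\mrm{L}}_B L' \cong B$; similarly choose $N' \in \cat{D}(C)$ with $N \otimes^{\mrm{L}}_C N' \cong C$. Because $A \to C$ is K-flat, the DG-ring $D := B \otimes_A C$ represents $B \otimes^{\mrm{L}}_A C$, and the object $M := L' \otimes^{\mrm{L}}_A N'$ is well-defined in $\cat{D}(D)$. I claim that $M$ is a tensor-inverse to $L \otimes^{\mrm{L}}_A N$ over $D$, which will show the latter is tilting.

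First I would pass to strict representatives. Fix K-flat resolutions $\til{L} \iso L$, $\til{L}' \iso L'$ in $\opn{DGMod} B$ and $\til{N} \iso N$, $\til{N}' \iso N'$ in $\opn{DGMod} C$. Since $A \to C$ is K-flat, $\til{N}$ and $\til{N}'$ are automatically K-flat over $A$ as well. Hence the ordinary tensor products $\til{L}\otimes_A \til{N}$ and $\til{L}'\otimes_A \til{N}'$ represent $L \otimes^{\mrm{L}}_A N$ and $M$ in $\cat{D}(D)$, respectively.

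The key technical step is to show that $\til{L}\otimes_A \til{N}$ is K-flat over $D$. For any $X \in \opn{DGMod} D$, there is a natural $D$-linear isomorphism
\[
(\til{L}\otimes_A \til{N}) \otimes_D X \cong \til{L}\otimes_B X \otimes_C \til{N},
\]
which one checks first on $X = D$ (where both sides reduce to $\til{L}\otimes_A \til{N}$) and then extends by cocontinuity in $X$. Because $\til{L}$ is K-flat over $B$ and $\til{N}$ is K-flat over $C$, this functor of $X$ is exact, so $\til{L}\otimes_A \til{N}$ is K-flat over $D$; the same reasoning applies to $\til{L}'\otimes_A \til{N}'$. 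Applying the analogous Fubini rearrangement to the tensor product of these two K-flat representatives gives
\[
(\til{L}\otimes_A \til{N})\otimes_D (\til{L}'\otimes_A \til{N}') \cong (\til{L}\otimes_B \til{L}')\otimes_A (\til{N}\otimes_C \til{N}').
\]
Since all four factors $\til{L},\til{L}',\til{N},\til{N}'$ are K-flat over their respective base DG-rings, and $\til{N}\otimes_C \til{N}'$ is K-flat over $A$, the right-hand side computes $(L\otimes^{\mrm{L}}_B L')\otimes^{\mrm{L}}_A (N\otimes^{\mrm{L}}_C N') \cong B\otimes^{\mrm{L}}_A C \cong D$ in $\cat{D}(D)$, the last isomorphism again using K-flatness of $A \to C$.

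The main obstacle, beyond the bookkeeping, is to verify that the Fubini isomorphism and the K-flatness formula above are genuinely $D$-linear, and not merely $A$-linear, and that the signs introduced by graded commutativity when swapping the middle two tensor factors cancel out correctly. Both are routine once written out carefully in the graded-commutative setting, but they are the only nontrivial points; everything else is formal manipulation of K-flat resolutions.
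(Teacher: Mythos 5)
Your proof takes essentially the same approach as the paper: construct the explicit tensor-inverse $L'\otimes^{\mrm{L}}_A N'$ and conclude from the Fubini interchange isomorphism $(L\otimes^{\mrm{L}}_{A} N) \otimes^{\mrm{L}}_{B\otimes_A C} (L'\otimes^{\mrm{L}}_{A} N') \cong (L\otimes^{\mrm{L}}_B L') \otimes^{\mrm{L}}_{A} (N\otimes^{\mrm{L}}_C N')$. The paper just asserts that isomorphism as known, taking $L'=\mrm{R}\opn{Hom}_B(L,B)$ and $N'=\mrm{R}\opn{Hom}_C(N,C)$, whereas you additionally spell out the K-flat representative bookkeeping that underlies it.
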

\begin{proof}
Letting $L' := \mrm{R}\opn{Hom}_B(L,B)$, and let $N' := \mrm{R}\opn{Hom}_C(N,C)$. By \cite[Corollary 6.6]{Ye1}, there are isomorphisms
\[
L\otimes^{\mrm{L}}_B L' \cong B
\]
and
\[
N\otimes^{\mrm{L}}_C N' \cong C,
\]
so the result follows from the isomorphism
\[
(L\otimes^{\mrm{L}}_{A} N) \otimes^{\mrm{L}}_{B\otimes_A C} (L'\otimes^{\mrm{L}}_{A} N') \cong
(L\otimes^{\mrm{L}}_B L') \otimes^{\mrm{L}}_{A} (N\otimes^{\mrm{L}}_C N').
\]
\end{proof}

\begin{thm}\label{thm:tensor-of-dualizing}
Let $\K$ be a Gorenstein noetherian ring of finite Krull dimension, 
let 
\[
A,B \in \thecat,
\] 
and let $C$ be a DG $\K$-ring that represents $A\otimes^{\mrm{L}}_{\K} B$.
Given a dualizing DG-module $R$ over $A$, and a dualizing DG-module $S$ over $B$, 
the DG-module $R\otimes^{\mrm{L}}_{\K} S$ is a dualizing DG-module over $C$.
\end{thm}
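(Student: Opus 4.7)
The plan is to verify the three defining conditions of a dualizing DG-module for $R \otimes^{\mrm{L}}_{\K} S$ over $C$, after first reducing to a transparent model of $C$. By Proposition \ref{prop:transfer}(2) the dualizing property is preserved along the equivalences $\eqfun_f$ of any homological equivalence over $\K$, so applying Proposition \ref{prop:res} separately to $A$ and to $B$ I obtain resolutions $\til{A}, \til{B} \in \thecat$ that are K-projective over $\K$, with each $\til{A}^i$ and $\til{B}^i$ flat over $\K$. Then $\til{A} \otimes_{\K} \til{B}$ represents $A \otimes^{\mrm{L}}_{\K} B$, so by the hypothesis on $C$ and Proposition \ref{prop:transfer}(2) I may assume $C = \til{C} := \til{A} \otimes_{\K} \til{B}$ and transport $R$ and $S$ to dualizing DG-modules $\til{R}$ over $\til{A}$ and $\til{S}$ over $\til{B}$; by Proposition \ref{prop:ffd} each of $\til{R}, \til{S}$ has finite flat dimension over $\K$.

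I would then verify that $\til{R} \otimes^{\mrm{L}}_{\K} \til{S}$ lies in $\cat{D}^{\mrm{b}}_{\mrm{f}}(\til{C})$: boundedness is immediate from the finite flat dimension of $\til{R}$ over $\K$ together with boundedness of $\til{S}$, while finite generation of cohomology over $\overline{\til{C}}$ follows by choosing pseudo-finite semi-free resolutions $P \iso \til{R}$ over $\til{A}$ and $Q \iso \til{S}$ over $\til{B}$ via \cite[Proposition 1.18(2)]{Ye1} and observing that the resulting $P \otimes_{\K} Q$ is pseudo-finite semi-free over $\til{C}$. For the homothety isomorphism I would apply Lemma \ref{lem:tensor-split} with $A := \til{A}$, $B := \til{B}$ and $P := M := \til{R}$, $Q := N := \til{S}$; its hypotheses are exactly what the reduction guarantees, since $\til{R}$ has finite flat dimension over $\K$ by Proposition \ref{prop:ffd} and $\mrm{R}\opn{Hom}_{\til{B}}(\til{S}, \til{S}) \cong \til{B}$ has finite flat dimension over $\K$ by the definition of $\thecat$. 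The lemma yields
\[
\mrm{R}\opn{Hom}_{\til{A}}(\til{R}, \til{R}) \otimes^{\mrm{L}}_{\K} \mrm{R}\opn{Hom}_{\til{B}}(\til{S}, \til{S}) \iso \mrm{R}\opn{Hom}_{\til{C}}(\til{R} \otimes^{\mrm{L}}_{\K} \til{S}, \til{R} \otimes^{\mrm{L}}_{\K} \til{S}),
\]
and the dualizing homothety isomorphisms for $\til{R}$ and for $\til{S}$ identify the left-hand side with $\til{A} \otimes_{\K} \til{B} = \til{C}$, as required.

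The remaining ingredient, and the step I expect to be the main obstacle, is finite injective dimension of $\til{R} \otimes^{\mrm{L}}_{\K} \til{S}$ relative to $\cat{D}(\til{C})$. The plan is to bound $\mrm{R}\opn{Hom}_{\til{C}}(N, \til{R} \otimes^{\mrm{L}}_{\K} \til{S})$ uniformly in $\inf H(N)$ and $\sup H(N)$ by combining the finite injective dimensions of $\til{R}$ over $\til{A}$ and of $\til{S}$ over $\til{B}$ with the finite flat dimensions of $\til{R}$ and $\til{S}$ over $\K$. A plausible route is to resolve $N$ by a pseudo-finite semi-free DG-module over $\til{C}$ of the form $\bigoplus (\til{A} \otimes_{\K} \til{B})[-k]^{\oplus r_k}$, apply a Lemma \ref{lem:tensor-split}-style splitting term by term to reduce to the analogous bounds for $\til{R}$ and $\til{S}$ separately, and patch the bounds together with truncation arguments; the subtle point is obtaining uniformity in $\sup H(N)$, since the dimension bound for $\til{R} \otimes^{\mrm{L}}_{\K} \til{S}$ over $\K$-coefficients must be controlled independently of the resolution of $N$.
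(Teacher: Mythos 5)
Your reduction to $C = \til{A}\otimes_{\K}\til{B}$ via Propositions \ref{prop:res} and \ref{prop:transfer}(2) matches the paper's first step, and your verification of the homothety isomorphism via Lemma \ref{lem:tensor-split} is a sound, self-contained alternative to the paper's use of Lemma \ref{lem:tensor-split-variation} at that point. But the proof has a genuine gap precisely where you flag one: the finite injective dimension of $\til{R}\otimes^{\mrm{L}}_{\K}\til{S}$ relative to $\cat{D}(\til{C})$ is never established. The ``plausible route'' you sketch does not work as stated. First, finite injective dimension relative to $\cat{D}(\til{C})$ must be tested against \emph{arbitrary} DG-modules $N \in \cat{D}(\til{C})$, not just those with finitely generated cohomology, so the pseudo-finite semi-free resolutions of \cite[Proposition 1.18(2)]{Ye1} are unavailable. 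Second, even if $N$ were in $\cat{D}^{-}_{\mrm{f}}(\til{C})$, the ``Lemma \ref{lem:tensor-split}-style splitting term by term'' requires the test object to factor as a box tensor $N_1\otimes^{\mrm{L}}_{\K}N_2$ with $N_1\in\cat{D}(\til{A})$, $N_2\in\cat{D}(\til{B})$; a generic semi-free resolution of $N$ over $\til{C}$ has free summands $\til{C}[-k]$, which are box tensors only in the trivial way and do not allow you to invoke the separate injective-dimension bounds for $\til{R}$ over $\til{A}$ and $\til{S}$ over $\til{B}$.

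The paper avoids confronting injective dimension directly by a different strategy: it exploits the Gorenstein hypothesis on $\K$ to produce \emph{explicit} rigid-type dualizing modules. Set $R^0 := (f^0)^!(\K)$, $S^0 := (g^0)^!(\K)$; since $\til{A}^0$ and $\til{B}^0$ are localizations of polynomial rings over the Gorenstein $\K$, these are shift-of-free dualizing complexes, and $R^0\otimes_{\K}S^0$ is a dualizing complex over the (again Gorenstein) ring $\til{A}^0\otimes_{\K}\til{B}^0$. Applying Lemma \ref{lem:tensor-split-variation} to the cohomologically finite semi-free maps $\til{A}^0\to\til{A}$, $\til{B}^0\to\til{B}$ identifies $R'\otimes^{\mrm{L}}_{\K}S'$ (where $R'=\mrm{R}\opn{Hom}_{\til{A}^0}(\til{A},R^0)$, etc.) with $\mrm{R}\opn{Hom}_{\til{A}^0\otimes_{\K}\til{B}^0}(\til{A}\otimes_{\K}\til{B},R^0\otimes^{\mrm{L}}_{\K}S^0)$, which is dualizing over $\til{C}$ by \cite[Proposition 7.5(1)]{Ye1}. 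The general $R,S$ are then handled by Yekutieli's tilting classification: $R\cong R'\otimes^{\mrm{L}}_{\til{A}}L$, $S\cong S'\otimes^{\mrm{L}}_{\til{B}}N$, Lemma \ref{lem:box-tensor-of-tilting} shows $L\otimes^{\mrm{L}}_{\K}N$ is tilting over $\til{C}$, and \cite[Theorem 7.10(1)]{Ye1} shows tilting twists preserve dualizing. Each step quotes a result already known to preserve the dualizing property, so the injective dimension condition is never attacked head-on. If you want to repair your approach along your own lines, a cleaner fix would be to instead verify the hypotheses of Proposition \ref{prop:lurie-reduction}, i.e.\ show $\mrm{R}\opn{Hom}_{\til{C}}(\overline{\til{C}},\til{R}\otimes^{\mrm{L}}_{\K}\til{S})$ is a dualizing complex over $\overline{\til{C}}$; this collapses the uniformity problem to a single test object, though you would then need a splitting argument over $\overline{\til{A}}\otimes_{\K}\overline{\til{B}}$ plus the classical fact that box tensor of dualizing complexes over Gorenstein base is dualizing.
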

\begin{proof}
Let $\K \to \til{A} \iso A$ and $\K \to \til{B} \iso B$ be resolutions as in Proposition \ref{prop:res}.
It is clear that $C$ is homologically equivalent over $\K$ to $\til{A}\otimes_{\K} \til {B}$, 
so by Proposition \ref{prop:transfer}, we may as well assume that $C = \til{A}\otimes_{\K} \til {B}$. 
In addition, by abuse of notation, we will replace $R$ and $S$ by 
their images in $\cat{D}(\til{A})$ and $\cat{D}(\til{B})$ under the corresponding forgetful functors.

Denote by $f:\K \to \til{A}$ and $g:\K \to \til{B}$ the structure maps, 
and set
\[
R^0 := (f^0)^{!}(\K), \quad S^0 := (g^0)^{!}(\K).
\]
These are dualizing complexes over $\til{A}^0$ and $\til{B}^0$ respectively. 
Further, recalling that $\til{A}^0$ and $\til{B}^0$ are localizations of polynomial rings over $\K$, 
we see that $R^0$ and $S^0$ are concretely given by some shifts of $\til{A}^0$ and $\til{B}^0$.
The ring $\til{A}^0 \otimes_{\K} \til{B}^0$ is also a localization of a polynomial ring over $\K$.
Hence, $R^0 \otimes_{\K} S^0$ is a dualizing complex over $\til{A}^0 \otimes_{\K} \til{B}^0$.
The maps $\til{A}^0 \to \til{A}$ and $\til{B}^0 \to \til{B}$ are cohomologically finite, 
so by \cite[Proposition 7.5(1)]{Ye1}, 
\[
R' := \mrm{R}\opn{Hom}_{\til{A}^0}(\til{A},R^0), \quad S':= \mrm{R}\opn{Hom}_{\til{B}^0}(\til{B},S^0)
\]
are dualizing DG-modules over $\til{A}$ and $\til{B}$ respectively.

We now claim that by Lemma \ref{lem:tensor-split-variation}, there is a natural isomorphism
\begin{eqnarray}
\mrm{R}\opn{Hom}_{\til{A}^0}(\til{A},R^0) \otimes^{\mrm{L}}_{\K} \mrm{R}\opn{Hom}_{\til{B}^0}(\til{B},S^0) \to\label{split-in-tdual}\\
\mrm{R}\opn{Hom}_{\til{A}^0 \otimes_{\K} \til{B}^0}( \til{A}\otimes_{\K} \til{B}, R^0 \otimes^{\mrm{L}}_{\K} S^0)\nonumber
\end{eqnarray}
in $\cat{D}(\til{A}\otimes_{\K} \til{B})$. One may easily see that all the conditions of this lemma are satisfied by this datum. 
The only non-trivial thing to check is that $S' = \mrm{R}\opn{Hom}_{\til{B}^0}(\til{B},S^0)$ has finite flat dimension over $\K$,
and this follows from Proposition \ref{prop:ffd}. 

Thus, (\ref{split-in-tdual}) is an isomorphism. 
But on its right hand side, because $\til{A}^0\otimes_{\K} \til{B}^0 \to \til{A}\otimes_{\K} \til{B}$ is cohomologically finite, 
\[
\mrm{R}\opn{Hom}_{\til{A}^0 \otimes_{\K} \til{B}^0}( \til{A}\otimes_{\K} \til{B}, R^0 \otimes^{\mrm{L}}_{\K} S^0)
\]
is a dualizing DG-module over $\til{A}\otimes_{\K} \til{B}$. 
Hence, $R'\otimes^{\mrm{L}}_{\K} S'$ is also a dualizing DG-module over $\til{A}\otimes_{\K} \til{B}$. 

By \cite[Theorem 7.10(2)]{Ye1}, there is some tilting module $L$ over $\til{A}$ such that $R \cong R'\otimes^{\mrm{L}}_{\til{A}} L$.
Similarly, there is a tilting DG-module $N$ over $\til{B}$, such that $S \cong S'\otimes^{\mrm{L}}_{\til{B}} N$ in $\cat{D}(\til{B})$.
Hence,
\[
R \otimes^{\mrm{L}}_{\K} S \cong R'\otimes^{\mrm{L}}_{\K} S' \otimes^{\mrm{L}}_{\til{A}\otimes_{\K} \til{B}} (L\otimes^{\mrm{L}}_{\K} N).
\]
Since by Lemma \ref{lem:box-tensor-of-tilting}, $L\otimes^{\mrm{L}}_{\K} N$ is a tilting DG-module over $\til{A}\otimes_{\K} \til{B}$, 
we deduce from \cite[Theorem 7.10(1)]{Ye1} that $R \otimes^{\mrm{L}}_{\K} S$
is a dualizing DG-module over $\til{A}\otimes_{\K} \til{B}$.
\end{proof}


\section{The twisted tensor product symmetric monoidal operation}

Let $\K$ be a Gorenstein noetherian ring of finite Krull dimension. 
We have seen in Corollary \ref{cor:finite} that for any $A \in \thecat$,
there exists a rigid dualizing DG-module over $A$ relative to $\K$.
Later in this section we will show that the rigid dualizing DG-module is unique,
up to isomorphism, so given $A \in \thecat$, 
let us denote by $R_A$ its rigid dualizing DG-module relative to $\K$,
and by $D_A$ the associated dualizing functor 
\[
D_A(-) := \mrm{R}\opn{Hom}_A(-,R_A).
\]

In \cite[Theorem 4.1]{AILN}, the main result of that paper, 
it is shown that if $A = A^0$ is a ring, under suitable finiteness conditions on $M,N$,
there is a functorial isomorphism
\begin{equation}\label{eqn:ailn-red}
\mrm{R}\opn{Hom}_{A\otimes^{\mrm{L}}_{\K} A}(A,M\otimes^{\mrm{L}}_{\K} N) \cong \mrm{R}\opn{Hom}_A(\mrm{R}\opn{Hom}_A(M,R_A),N).
\end{equation}
As we explained in \cite{Sh1}, 
at least when $\K$ is Gorenstein of finite Krull dimension, 
as assumed in this section,
the correct way to view this isomorphism is as follows.
As $N$ is assumed to have finitely generated cohomology,
there is an isomorphism 
\[
N \cong \mrm{R}\opn{Hom}_A(\mrm{R}\opn{Hom}_A(N,R),R),
\]
and plugging this into (\ref{eqn:ailn-red}), 
using adjunction, we may write its right hand side as
\begin{eqnarray}
\mrm{R}\opn{Hom}_A(\mrm{R}\opn{Hom}_A(M,R_A)\otimes^{\mrm{L}}_A \mrm{R}\opn{Hom}_A(N,R_A),R_A) = \nonumber\\
D_A(D_A(M)\otimes^{\mrm{L}}_A D_A(N)).\label{eqn:first-tensor-twist}
\end{eqnarray}
To understand this expression,
we now generalize to the DG-case the following construction from \cite{Sh1}:\\
Let $A,B \in \thecat$, 
and let
\[
F : \underbrace{\cat{D}(A)\times \cat{D}(A) \times \dots \times \cat{D}(A)}_n \to \cat{D}(B)
\]
be some functor. 
We define the twist of $F$ to be the functor 
\[
F^{!}:\underbrace{\cat{D}(A)\times \cat{D}(A) \times \dots \times \cat{D}(A)}_n \to \cat{D}(B)
\]
given by
\[
F^{!}(M_1,\dots,M_n) := D_B(F(D_A(M_1),\dots,D_A(M_n))).
\]
With this construction in hand, 
it is clear that (\ref{eqn:first-tensor-twist}) is simply the twist of the derived tensor product 
\[
-\otimes^{\mrm{L}}_A -: \cat{D}(A) \times \cat{D}(A) \to \cat{D}(A)
\]
Thus, we will denote it by $-\otimes^{!}_A-$, and refer to it as the twisted tensor product bifunctor.
Since $-\otimes^{\mrm{L}}_A-$ is a symmetric monoidal product on $\cat{D}^{-}_{\mrm{f}}(A)$,
and since 
\[
D_A : \cat{D}^{+}_{\mrm{f}}(A) \to \cat{D}^{-}_{\mrm{f}}(A)
\]
is an equivalence, it follows that $- \otimes^{!}_A -$ makes $\cat{D}^{+}_{\mrm{f}}(A)$ 
into a symmetric monoidal category. 
We will show in this section that (\ref{eqn:ailn-red}) generalizes to the DG-case. 
Our proof is based on a non-trivial adaptation to the commutative DG-case of \cite[Theorem 2.6]{Sh2} 
where we proved an analogue result for noncommutative algebras over a field which are finite over their center.

We begin by recalling a classical result, 
which is in some sense dual to the main result of this section.
\begin{prop}\label{prop:hochschild-homology-of-tensor}
Let $\K$ be a commutative ring, and let $A$ be K-flat  DG-ring over $\K$. For any $M,N \in \cat{D}(A)$. Then there is a natural isomorphism
\[
A \otimes^{\mrm{L}}_{A\otimes_{\K} A} (M\otimes^{\mrm{L}}_{\K} N) \cong M\otimes^{\mrm{L}}_A N
\]
in $\cat{D}(A)$.
\end{prop}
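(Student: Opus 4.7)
The plan is to produce a K-flat representative of $M \otimes^{\mrm{L}}_{\K} N$ as a DG-module over $A^e := A \otimes_{\K} A$ whose underived tensor product with $A$ over $A^e$ manifestly coincides with an explicit model of $M \otimes^{\mrm{L}}_A N$.

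First, I would pick K-flat resolutions $P \iso M$ and $Q \iso N$ in $\cat{D}(A)$. Since $A$ is K-flat over $\K$ and $P, Q$ are K-flat over $A$, both $P$ and $Q$ are K-flat as DG $\K$-modules. Hence the DG-module $P \otimes_{\K} Q$, equipped with its natural $A^e$-action induced by the $A$-action on each factor, concretely represents $M \otimes^{\mrm{L}}_{\K} N$ in $\cat{D}(A^e)$.

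The technical heart of the argument is the claim that $P \otimes_{\K} Q$ is in fact K-flat as a DG $A^e$-module. Given an acyclic $X \in \opn{DGMod} A^e$, I would establish the natural isomorphism
\[
X \otimes_{A^e} (P \otimes_{\K} Q) \cong (X \otimes_A P) \otimes_A Q,
\]
where the inner tensor uses the $A$-action on $X$ coming from the first tensor factor of $A^e$ and the outer tensor uses the one coming from the second factor. Because acyclicity is a property of the underlying complex of $\K$-modules, $X$ is acyclic as a DG $A$-module under either action; K-flatness of $P$ over $A$ then forces $X \otimes_A P$ to be acyclic, and K-flatness of $Q$ over $A$ in turn forces $(X \otimes_A P) \otimes_A Q$ to be acyclic, which proves the K-flatness claim.

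With this in place, the derived tensor product is computed without further resolution as
\[
A \otimes^{\mrm{L}}_{A^e} (M \otimes^{\mrm{L}}_{\K} N) \cong A \otimes_{A^e} (P \otimes_{\K} Q) \cong P \otimes_A Q \cong M \otimes^{\mrm{L}}_A N,
\]
the middle isomorphism coming from the multiplication map $A^e \to A$, which collapses the two $A$-actions on $P \otimes_{\K} Q$ into a single one. Naturality in $M$ and $N$ follows immediately from the functoriality of K-flat resolutions in the derived category. The main obstacle I anticipate is careful tracking of Koszul signs when verifying the $A^e$-linearity of the key isomorphism and the compatibility of the two $A$-actions on $X$ in the graded-commutative setting — a technical bookkeeping matter rather than a conceptual one.
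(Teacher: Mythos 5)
Your proof is correct, and it is essentially the standard argument that the paper itself invokes: the paper's proof of this proposition is a one-line citation to the classical case and to \cite[Remark 3.11]{AILN}, noting that "the same proof works in this more general situation," and your argument — choosing K-flat resolutions $P$, $Q$ over $A$, observing that $P\otimes_{\K}Q$ is a K-flat $A^e$-representative of $M\otimes^{\mrm{L}}_{\K}N$ via the rearrangement $X\otimes_{A^e}(P\otimes_{\K}Q)\cong(X\otimes_A P)\otimes_A Q$, and then collapsing along $A^e\to A$ — is exactly the standard proof being referred to, transcribed in the DG setting. The only caveat worth keeping in mind is the Koszul-sign bookkeeping you already flag, which is indeed routine.
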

\begin{proof}
If $A=A^0$ is a ring, this is a well known classical result, and the same proof works in this more general situation. See \cite[Remark 3.11]{AILN}.
\end{proof}
\begin{prop}\label{prop:hom-of-dualizing}
Let $f:A \to B$ be a homomorphism of DG rings, with $A$ being cohomologically noetherian, and let $R$ be a dualizing DG-module over $A$. Let $M \in \cat{D}(B)$, such that $\opn{For}_f(M) \in \cat{D}_{\mrm{f}}(A)$, and let $N\in \cat{D}_{\mrm{f}}(A)$.  Then there is a natural isomorphism
\[
\mrm{R}\opn{Hom}_A(D(M),D(N)) \cong \mrm{R}\opn{Hom}_A(N,M)
\]
in $\cat{D}(B)$, where $D(-) := \mrm{R}\opn{Hom}_A(-,R)$.
\end{prop}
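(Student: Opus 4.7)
The plan is to chain together the derived Hom--tensor adjunction over $A$ with biduality for the dualizing DG-module $R$. Starting from the left hand side, applying the adjunction, the symmetry of $\otimes^{\mrm{L}}_A$, and the adjunction once more yields the chain of natural isomorphisms
\begin{align*}
\mrm{R}\opn{Hom}_A(D(M), D(N)) &\cong \mrm{R}\opn{Hom}_A(D(M) \otimes^{\mrm{L}}_A N,\, R) \\
&\cong \mrm{R}\opn{Hom}_A(N \otimes^{\mrm{L}}_A D(M),\, R) \\
&\cong \mrm{R}\opn{Hom}_A(N,\, D(D(M))).
\end{align*}

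The next step is to invoke biduality. Since $R$ is dualizing over $A$ and $\opn{For}_f(M) \in \cat{D}_{\mrm{f}}(A)$, the canonical morphism $M \to D(D(M))$ is an isomorphism in $\cat{D}(A)$. This is a standard property of dualizing DG-modules, following from the defining isomorphism $A \iso \mrm{R}\opn{Hom}_A(R,R)$ by a thick-subcategory argument combined with the finite injective dimension of $R$, as developed in \cite[Section 7]{Ye1}. Substituting this into the chain above gives the required isomorphism.

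Finally, I must verify $B$-equivariance so that the isomorphism lives in $\cat{D}(B)$. The $B$-action on $D(M)$ and on $D(D(M))$ is inherited from the $B$-action on $M$, transported into the contravariant slot of $\mrm{R}\opn{Hom}_A(-,R)$ by precomposition, which is well-defined since $B$ is commutative. The Hom--tensor adjunction, the symmetry of $\otimes^{\mrm{L}}_A$, and the biduality morphism $M \to D(D(M))$ are all natural in all variables and in particular respect these $B$-actions, so the entire chain descends to $\cat{D}(B)$. The main point to track carefully is the bookkeeping of the $B$-module structure through the tensor-swap step; the rest of the argument is purely formal.
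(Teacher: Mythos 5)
Your argument is correct and is essentially the paper's proof: the paper also chains the Hom--tensor adjunction twice (via explicit K-projective and K-injective resolutions $P \iso N$, $R \iso I$) through the intermediate terms $\mrm{R}\opn{Hom}_A(D(M)\otimes^{\mrm{L}}_A N, R)$ and $\mrm{R}\opn{Hom}_A(N, D(D(M)))$, and then invokes biduality for $M$ (citing \cite[Proposition 7.2(2)]{Ye1}), noting that the biduality quasi-isomorphism $M \to \opn{Hom}_A(\opn{Hom}_A(M,I),I)$ is $B$-linear. Your handling of the $B$-equivariance is a bit more informal than working with explicit resolutions, but the content is the same.
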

\begin{proof}
Let $R\iso I$ be a K-injective resolution over $A$, and let $P \iso N$ be a K-projective resolution over $A$.
Note that the DG-module $\opn{Hom}_A(P,I)$ is also K-injective. Hence, we have that
\[
\mrm{R}\opn{Hom}_A(D(M),D(N)) \cong \opn{Hom}_A(\opn{Hom}_A(M,I),\opn{Hom}_A(P,I)).
\]
Using adjunction twice, there are natural isomorphisms in $\cat{D}(B)$:
\begin{eqnarray}
\opn{Hom}_A(\opn{Hom}_A(M,I),\opn{Hom}_A(P,I)) \cong\nonumber\\
\opn{Hom}_A(\opn{Hom}_A(M,I)\otimes_A P,I)\cong\nonumber\\
\opn{Hom}_A(P,\opn{Hom}_A(\opn{Hom}_A(M,I),I)).\nonumber
\end{eqnarray}
By \cite[Proposition 7.2(2)]{Ye1}, the natural $B$-linear map $M\to \opn{Hom}_A(\opn{Hom}_A(M,I),I)$ is a quasi-isomorphism, so the result follows.
\end{proof}

Here is the main result of this section, a generalization of \cite[Theorem 4.1]{AILN} to the DG-case.

\begin{thm}\label{thm:reduction}
Let $\K$ be a Gorenstein noetherian ring of finite Krull dimension,
let $A \in \thecat$, and let $(R_A,\rho)$ be a rigid dualizing DG-module over $A$ relative to $\K$.
Then there is a functorial isomorphism
\begin{eqnarray}
\mrm{R}\opn{Hom}_{A\otimes^{\mrm{L}}_{\K} A}(A,M\otimes^{\mrm{L}}_{\K} N) \cong \nonumber\\
\mrm{R}\opn{Hom}_A(\mrm{R}\opn{Hom}_A(M,R_A)\otimes^{\mrm{L}}_A \mrm{R}\opn{Hom}_A(N,R_A),R_A) \nonumber
\end{eqnarray}
for any $M \in \cat{D}^{b}_{\mrm{f}}(A)$
such that $\mrm{R}\opn{Hom}_A(M,R_A)$ has finite flat dimension over $\K$,
and for any $N \in \cat{D}^{-}_{\mrm{f}}(A)$.
\end{thm}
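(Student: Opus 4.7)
The plan is to adapt the strategy of \cite[Theorem 4.1]{AILN} to the DG setting, using the machinery developed in Sections 1--4. First, by Proposition \ref{prop:res} combined with the transfer Proposition \ref{prop:transfer}, I would replace $A$ by a quasi-isomorphic DG-ring $\til{A}$ which is K-flat over $\K$. Then $A\otimes^{\mrm{L}}_{\K} A$ is represented by the honest tensor product $\til{A}\otimes_{\K}\til{A}$ and derived Hochschild cohomology can be computed on the nose. The rigidity datum $\rho$, the dualizing DG-module $R_A$, and all finiteness and boundedness hypotheses on $M,N$ transport along this equivalence by Proposition \ref{prop:transfer}, so from here on I assume that $A$ itself is K-flat.

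Write $D(-):=\mrm{R}\opn{Hom}_A(-,R_A)$ and $R^{\boxtimes}:=R_A\otimes^{\mrm{L}}_{\K} R_A$. By Theorem \ref{thm:tensor-of-dualizing}, $R^{\boxtimes}$ is a dualizing DG-module over $A\otimes_{\K} A$; let $D':=\mrm{R}\opn{Hom}_{A\otimes_{\K} A}(-,R^{\boxtimes})$ be the associated dualizing functor. The rigidity of $R_A$ says precisely that $D'(A)\cong R_A$, where $A$ is viewed as an $(A\otimes_{\K} A)$-module via the multiplication map. Applying Proposition \ref{prop:hom-of-dualizing} over $A\otimes_{\K} A$ with $M\otimes^{\mrm{L}}_{\K} N$ and $A$ as its two $\cat{D}_{\mrm{f}}$-arguments transforms the left-hand side of the desired isomorphism into
\[
\mrm{R}\opn{Hom}_{A\otimes_{\K} A}\bigl(D'(M\otimes^{\mrm{L}}_{\K} N),\, R_A\bigr).
\]

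The central step is then a Künneth-type identification $D'(M\otimes^{\mrm{L}}_{\K} N)\cong D(M)\otimes^{\mrm{L}}_{\K} D(N)$, which I would extract from Lemma \ref{lem:tensor-split}. The subtle point is the order of arguments: condition (3) of that lemma is asymmetric, so I would apply it with $P=N$, $Q=M$, and both of the lemma's targets equal to $R_A$. This way, the finite-flat-dimension requirement on $\mrm{R}\opn{Hom}_B(Q,-)$ becomes exactly the standing hypothesis that $D(M)$ has finite flat dimension over $\K$, while the finite flat dimension of $R_A$ itself is automatic from Proposition \ref{prop:ffd}. Symmetry of $\otimes^{\mrm{L}}_{\K}$ then produces the required formula. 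To descend from $A\otimes_{\K} A$ back to $A$, I would combine the obvious Hom-tensor adjunction along the multiplication map $A\otimes_{\K} A\to A$ with Proposition \ref{prop:hochschild-homology-of-tensor}, which rewrites $A\otimes^{\mrm{L}}_{A\otimes_{\K} A}\bigl(D(M)\otimes^{\mrm{L}}_{\K} D(N)\bigr)$ as $D(M)\otimes^{\mrm{L}}_A D(N)$. Splicing these four identifications together yields the claimed natural isomorphism.

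The main obstacle is bookkeeping rather than any new idea: Lemma \ref{lem:tensor-split}'s asymmetric finite-flat-dimension hypothesis must be matched to the asymmetric hypothesis of the theorem by choosing the factor ordering above, and getting this wrong would force the unavailable assumption that $D(N)$ has finite flat dimension over $\K$. A secondary technical point is verifying that $M\otimes^{\mrm{L}}_{\K} N\in\cat{D}^{-}_{\mrm{f}}(A\otimes_{\K} A)$ so that Proposition \ref{prop:hom-of-dualizing} applies; this follows by taking pseudo-finite semi-free resolutions of $M$ and $N$ over $A$, exactly as in the proof of Lemma \ref{lem:tensor-split}, and noting that their $\K$-tensor product is pseudo-finite semi-free over $A\otimes_{\K} A$.
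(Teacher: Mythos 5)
Your proposal is correct and follows essentially the same route as the paper's own proof: reduce to the K-flat case via Propositions \ref{prop:res} and \ref{prop:transfer}, identify $R_A\otimes^{\mrm{L}}_{\K}R_A$ as dualizing over $A\otimes_{\K}A$ via Theorem \ref{thm:tensor-of-dualizing}, apply Proposition \ref{prop:hom-of-dualizing} together with rigidity, then the K\"unneth Lemma \ref{lem:tensor-split}, and finally descend by adjunction along $A\otimes_{\K}A\to A$ and Proposition \ref{prop:hochschild-homology-of-tensor}. Your explicit attention to the factor ordering in Lemma \ref{lem:tensor-split} (taking $P=N$, $Q=M$ so that condition (3) matches the hypothesis that $D(M)$, not $D(N)$, has finite flat dimension over $\K$) is a point the paper leaves implicit, and you have handled it correctly.
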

\begin{proof}
Step 1:
Let $f:A \hequiv \til{A}$ be the homological equivalence constructed in Proposition \ref{prop:res},
and let $R_{\til{A}} := \eqfun_{f}(R_A)$. 
By Proposition \ref{prop:transfer}, 
$R_{\til{A}}$ has the structure of a rigid dualizing DG-module over $\til{A}$ relative to $\K$.
Also by Proposition \ref{prop:transfer}, 
there are functorial isomorphisms
\[
\mrm{R}\opn{Hom}_{A\otimes^{\mrm{L}}_{\K} A}(A,M\otimes^{\mrm{L}}_{\K} N) 
\cong 
\eqfun_{f^{-1}}(\mrm{R}\opn{Hom}_{\til{A}\otimes^{\mrm{L}}_{\K} \til{A}}(\til{A},\eqfun_f(M)\otimes^{\mrm{L}}_{\K} \eqfun_f(N)))
\]
and
\[
M\otimes^{!}_A N \cong
\eqfun_{f^{-1}} ( \eqfun_f(M)\otimes_{\til{A}}^{!} \eqfun_f(N) ),
\]
(where the $-\otimes^{!}-$ are defined with respect to $R_A$ and $R_{\til{A}}$ respectively).
Hence, it is enough to prove the result for $A = \til{A}$.

Step 2: By step 1, we have that $A$ is K-flat over $\K$, 
so by \cite[Theorem 6.15]{Ye2}, we have that
\[
\mrm{R}\opn{Hom}_{A\otimes^{\mrm{L}}_{\K} A}(A,M\otimes^{\mrm{L}}_{\K} N) \cong
\mrm{R}\opn{Hom}_{A\otimes_{\K} A}(A,M\otimes^{\mrm{L}}_{\K} N).
\]
By Theorem \ref{thm:tensor-of-dualizing},
the DG-module 
\[
R_A\otimes^{\mrm{L}}_{\K} R_A
\]
is a dualizing DG-module over $A\otimes_{\K} A$. 
It is clear that $M\otimes^{\mrm{L}}_{\K} N$ and $A$ both belong to $\cat{D}_{\mrm{f}}(A\otimes_{\K} A)$,
so by Proposition \ref{prop:hom-of-dualizing}, 
there is a functorial isomorphism
\begin{eqnarray}
\mrm{R}\opn{Hom}_{A\otimes_{\K} A}(A,M\otimes^{\mrm{L}}_{\K} N) \cong\nonumber\\
\mrm{R}\opn{Hom}_{A\otimes_{\K} A}(D_{A\otimes_{\K} A}(M\otimes^{\mrm{L}}_{\K} N),D_{A\otimes_{\K} A}(A))\label{reduction-after-dualizing},
\end{eqnarray}
in $\cat{D}(A)$,
where we have set
\[
D_{A\otimes_{\K} A}(-) := \mrm{R}\opn{Hom}_{A\otimes_{\K} A}(-,R_A\otimes^{\mrm{L}}_{\K} R_A).
\]
By rigidity of $R_A$, 
we know that there is an isomorphism
\[
\rho_A : R_A \to D_{A\otimes_{\K} A}(A)
\]
in $\cat{D}(A)$, and using this isomorphism, (\ref{reduction-after-dualizing}) is naturally isomorphic to
\[
\mrm{R}\opn{Hom}_{A\otimes_{\K} A}(\mrm{R}\opn{Hom}_{A\otimes_{\K} A}(M\otimes^{\mrm{L}}_{\K} N,R_A\otimes^{\mrm{L}}_{\K} R_A),R_A).
\]
By Proposition \ref{prop:ffd}, 
$R_A$ has finite flat dimension over $\K$.
Hence, by Lemma \ref{lem:tensor-split},
there is a natural isomorphism
\[
\mrm{R}\opn{Hom}_{A}(M,R_A) \otimes^{\mrm{L}}_{\K} \mrm{R}\opn{Hom}_{A}(N,R_A)
\cong
\mrm{R}\opn{Hom}_{A\otimes_{\K} A}(M\otimes^{\mrm{L}}_{\K} N,R_A\otimes^{\mrm{L}}_{\K} R_A)
\]
in $\cat{D}(A\otimes_{\K} A)$.
Hence, we have that
\begin{eqnarray}
\mrm{R}\opn{Hom}_{A\otimes_{\K} A}(\mrm{R}\opn{Hom}_{A\otimes_{\K} A}(M\otimes^{\mrm{L}}_{\K} N,R_A\otimes^{\mrm{L}}_{\K} R_A),R_A) \cong \nonumber\\
\mrm{R}\opn{Hom}_{A\otimes_{\K} A}(
\mrm{R}\opn{Hom}_{A}(M,R_A) \otimes^{\mrm{L}}_{\K} \mrm{R}\opn{Hom}_{A}(N,R_A) ,R_A)\cong\nonumber\\
\mrm{R}\opn{Hom}_A(A\otimes^{\mrm{L}}_{A\otimes_{\K} A}(\mrm{R}\opn{Hom}_{A}(M,R_A) \otimes^{\mrm{L}}_{\K} \mrm{R}\opn{Hom}_{A}(N,R_A)),R_A)\nonumber,
\end{eqnarray}
where the second isomorphism is adjunction.
By Proposition \ref{prop:hochschild-homology-of-tensor}, 
we get an isomorphism of functors
\begin{eqnarray}
\mrm{R}\opn{Hom}_A(A\otimes^{\mrm{L}}_{A\otimes_{\K} A}(\mrm{R}\opn{Hom}_{A}(M,R_A) \otimes^{\mrm{L}}_{\K} \mrm{R}\opn{Hom}_{A}(N,R_A)),R_A) \cong\nonumber\\
\mrm{R}\opn{Hom}_A(\mrm{R}\opn{Hom}_{A}(M,R_A)\otimes^{\mrm{L}}_A \mrm{R}\opn{Hom}_{A}(N,R_A), R_A),\nonumber
\end{eqnarray}
and this completes the proof.
\end{proof}

\begin{cor}\label{cor:group}
Let $\K$ be a Gorenstein noetherian ring of finite Krull dimension.
For any $A \in \thecat$, 
denote by $\mathcal{D}_A$ the set of isomorphism classes of dualizing DG-modules over $A$.
Then the operation 
\[
\mrm{R}\opn{Hom}_{A\otimes^{\mrm{L}}_{\K} A}(A,-\otimes^{\mrm{L}}_{\K}-)
\]
defines a group structure on $\mathcal{D}_A$, 
and any rigid dualizing DG-module $(R_A,\rho)$ is a unit of this group.
In particular, the rigid dualizing DG-module is unique up to isomorphism.
\end{cor}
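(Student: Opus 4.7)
The plan is to leverage Theorem \ref{thm:reduction} to identify the binary operation
\[
R \boxtimes S := \mrm{R}\opn{Hom}_{A\otimes^{\mrm{L}}_{\K} A}(A, R\otimes^{\mrm{L}}_{\K} S)
\]
on $\mathcal{D}_A$ with the Picard group operation on tilting DG-modules over $A$. By Corollary \ref{cor:finite}, a rigid dualizing DG-module $(R_A,\rho)$ exists; fix one and set $D_A(-) := \mrm{R}\opn{Hom}_A(-,R_A)$. By \cite[Theorem 7.10]{Ye1}, the assignment $P \mapsto R_A \otimes^{\mrm{L}}_A P$ yields a bijection between the Picard group $\opn{Pic}(A)$ of isomorphism classes of tilting DG-modules over $A$ (with operation $\otimes^{\mrm{L}}_A$ and unit $A$) and $\mathcal{D}_A$.

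Given two dualizing DG-modules $R = R_A \otimes^{\mrm{L}}_A P$ and $S = R_A \otimes^{\mrm{L}}_A Q$, I would first verify the hypotheses of Theorem \ref{thm:reduction}. Clearly $R \in \cat{D}^{\mrm{b}}_{\mrm{f}}(A)$, and by Hom-tensor adjunction together with $\mrm{R}\opn{Hom}_A(R_A,R_A) \cong A$,
\[
D_A(R) \cong \mrm{R}\opn{Hom}_A(P, A) =: P^{\vee},
\]
the tilting inverse of $P$. By \cite[Theorem 5.11(iii)]{Ye1} and \cite[Theorem 6.5]{Ye1}, $P^{\vee}$ has finite flat dimension over $A$, hence over $\K$ since $A \in \thecat$. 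Applying Theorem \ref{thm:reduction}, together with the identity $\mrm{R}\opn{Hom}_A(X,R_A) \cong R_A \otimes^{\mrm{L}}_A X^{\vee}$ valid for tilting $X$ (and $(P^{\vee} \otimes^{\mrm{L}}_A Q^{\vee})^{\vee} \cong P \otimes^{\mrm{L}}_A Q$),
\[
R \boxtimes S \;\cong\; \mrm{R}\opn{Hom}_A(P^{\vee} \otimes^{\mrm{L}}_A Q^{\vee}, R_A) \;\cong\; R_A \otimes^{\mrm{L}}_A (P \otimes^{\mrm{L}}_A Q).
\]
Thus Yekutieli's bijection intertwines $\boxtimes$ with $\otimes^{\mrm{L}}_A$ on $\opn{Pic}(A)$. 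In particular $\boxtimes$ is a well-defined binary operation on $\mathcal{D}_A$, inheriting associativity, a unit, and inverses from the group structure on $\opn{Pic}(A)$; and the unit of $\opn{Pic}(A)$, namely $A$, maps to $R_A \otimes^{\mrm{L}}_A A = R_A$, so $R_A$ is the unit of $(\mathcal{D}_A, \boxtimes)$.

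Finally, suppose $(R,\rho)$ is any rigid dualizing DG-module, and write $R \cong R_A \otimes^{\mrm{L}}_A P$. The rigidity isomorphism $\rho : R \iso R\boxtimes R$ translates under the bijection to $P \cong P \otimes^{\mrm{L}}_A P$ in $\opn{Pic}(A)$, making $P$ an idempotent of a group, hence the identity. Therefore $P \cong A$ and $R \cong R_A$. This simultaneously shows that every rigid dualizing DG-module is a unit and — since the unit of a group is unique — that the rigid dualizing DG-module is unique up to isomorphism. The main technical subtlety I anticipate is checking the finite flat dimension hypothesis of Theorem \ref{thm:reduction} for an arbitrary dualizing $R$ (rather than just for $R_A$); this is precisely why I would route the argument through the Picard group description from the very outset, so that $D_A(R)$ is recognized as a tilting DG-module and its flat dimension over $\K$ is automatic.
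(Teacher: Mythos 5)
Your proposal is correct and follows essentially the same route as the paper's proof: fix a rigid $R_A$ via Corollary \ref{cor:finite}, use \cite[Theorem 7.10]{Ye1} to transport the group of tilting DG-modules onto $\mathcal{D}_A$, verify the flat-dimension hypothesis of Theorem \ref{thm:reduction} via \cite[Theorems 5.11, 6.5]{Ye1}, and then read off the unit and uniqueness from the group structure. You spell out the intertwining isomorphism $R \boxtimes S \cong R_A \otimes^{\mrm{L}}_A (P \otimes^{\mrm{L}}_A Q)$ more explicitly than the paper does, and you phrase uniqueness via idempotence of $P$ in $\opn{Pic}(A)$ rather than directly via uniqueness of the group identity, but these are cosmetic differences.
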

\begin{proof}
Let $(R_A,\rho)$ be some rigid dualizing DG-module over $A$ relative to $\K$ (such exists by Corollary \ref{cor:finite}).
Note that for any $R \in \mathcal{D}_A$, 
by \cite[Theorem 7.10(2)]{Ye1}
\[
P := \mrm{R}\opn{Hom}_A(R,R_A)
\]
is a tilting DG-module over $A$. 
By \cite[Theorem 6.5]{Ye1}, 
this implies that $P$ is a perfect DG-module over $A$,
and by \cite[Theorem 5.11]{Ye1}, 
this implies that $P$ has finite flat dimension relative to $\cat{D}(A)$,
so that $P$ has finite flat dimension over $\K$.
It follows that Theorem \ref{thm:reduction} applies to any pair $R_1,R_2 \in \mathcal{D}_A$,
so it is enough to show that $-\otimes^{!}_A-$ defines a group structure on $\mathcal{D}_A$.
But this follows immediately from the fact that the set of isomorphism classes of tilting DG-modules form a group
with respect to $-\otimes^{\mrm{L}}_A -$.
Finally, it is clear from Theorem \ref{thm:reduction} that $R_A$ is a unit of this group,
so the fact that the rigid dualizing DG-module is unique up to isomorphism follows from the uniqueness of the identity element in a group.
\end{proof}

\begin{rem}
For a DG-ring $A$, we denote by $\opn{DPic}(A)$ its derived Picard group. Its objects are isomorphism classes of  tilting DG-modules, and the group operation is given by the derived tensor product $-\otimes^{\mrm{L}}_A -$. 
Then it is clear from the above proof that the map $\mrm{R}\opn{Hom}_A(-,R_A)$ defines a group isomorphism between $\mathcal{D}_A$, and $\opn{DPic}(A)$.
\end{rem}

\begin{notation}
In the rest of the paper, 
given a Gorenstein noetherian ring $\K$ of finite Krull dimension, 
and $A \in \thecat$, we will denote by $R_A$ the unique rigid dualizing DG-module over $A$ relative to $\K$.
\end{notation}

\begin{rem}
We could have given an easier proof of the fact that the rigid dualizing DG-module is unique,
imitating \cite[Theorem 3.3]{YZ2}. 
However, we find the above proof more conceptual: 
the rigid dualizing DG-module is unique up to isomorphism,
because it is the identity element of a canonical group whose operation is the functor underlying derived Hochschild cohomology. See also \cite[Theorem 9.7]{Ye1} for a similar uniqueness result in the DG-setting (but with slightly different finiteness assumptions).
\end{rem}

We may now obtain a slight improvement of Theorem \ref{thm:reduction}.

\begin{prop}\label{prop:ffd-of-dual}
Let $\K$ be a Gorenstein noetherian ring of finite Krull dimension.
Given $M \in \cat{D}^{\mrm{b}}_{\mrm{f}}(A)$, 
if $M$ has finite flat dimension over $\K$, 
then
\[
\mrm{R}\opn{Hom}_A(M,R_A)
\]
also has finite flat dimension over $\K$.
Hence, in Theorem \ref{thm:reduction}, 
it is enough to assume that $M$ itself has finite flat dimension over $\K$.
\end{prop}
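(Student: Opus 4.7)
The plan is to reduce via Proposition \ref{prop:res} to a concrete resolution and then invoke \cite[Theorem 1.2(1)]{AILN}, closely paralleling the approach of Proposition \ref{prop:ffd}.

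First, I would apply Proposition \ref{prop:res} to obtain a $\K$-linear homological equivalence $f:A\hequiv\til{A}$ with $\til{A}^0 = S^{-1}(\K[x_1,\dots,x_n])$ and $\til{A}$ K-projective over $\K$, then set $\til{M} := \eqfun_f(M)$. Using the formula
\[
\mrm{R}\opn{Hom}_A(M,R_A) \cong \eqfun_{f^{-1}}(\mrm{R}\opn{Hom}_{\til{A}}(\til{M},R_{\til{A}}))
\]
from Proposition \ref{prop:transfer}(4), the problem reduces to proving the statement for the pair $(\til{A},\til{M})$, provided I check that the transfer equivalences preserve both $\cat{D}^{\mrm{b}}_{\mrm{f}}$ (already known from the proof of Proposition \ref{prop:transfer}) and the property of having finite flat dimension over $\K$.

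In the reduced situation, the proof of Corollary \ref{cor:finite}(1), together with the uniqueness of the rigid dualizing DG-module (Corollary \ref{cor:group}), lets me identify
\[
R_{\til{A}} \cong \mrm{R}\opn{Hom}_{\til{A}^0}(\til{A},\omega),
\]
where $\omega := (f^0)^!(\K)$ is the rigid dualizing complex of $f^0 : \K \to \til{A}^0$ relative to $\K$. Hom-tensor adjunction along $\til{A}^0 \to \til{A}$ then furnishes
\[
\mrm{R}\opn{Hom}_{\til{A}}(\til{M},R_{\til{A}}) \cong \mrm{R}\opn{Hom}_{\til{A}^0}(\til{M},\omega)
\]
in $\cat{D}(\K)$. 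Since $\til{A}^0 \to \bar{\til{A}}$ is surjective, one has $\til{M} \in \cat{D}^{\mrm{b}}_{\mrm{f}}(\til{A}^0)$, and combined with its assumed finite flat dimension over $\K$ this puts $\til{M}$ in the class $\mathsf{P}(f^0)$ of \cite{AILN}. Invoking \cite[Theorem 1.2(1)]{AILN} will then give $\mrm{R}\opn{Hom}_{\til{A}^0}(\til{M},\omega) \in \mathsf{P}(f^0)$, which is the required finite flat dimension over $\K$.

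The main obstacle I anticipate is the initial bookkeeping step: I must verify that $\eqfun_f$, and symmetrically $\eqfun_{f^{-1}}$, preserves the property of having finite flat dimension over $\K$. This is not entirely formal, and requires unraveling $\eqfun_f$ as an alternating composition of forgetful functors, which trivially preserve the underlying complex of $\K$-modules, and derived scalar extensions along $\K$-linear quasi-isomorphisms of DG-rings $\varphi:C_i \to C_{i+1}$, for which I would use that for any K-flat resolution $P \iso M$ over $C_i$ the canonical $\K$-linear map $P \to P\otimes_{C_i} C_{i+1}$ is a quasi-isomorphism. Each such building block therefore preserves the isomorphism class in $\cat{D}(\K)$, and the full claim then follows by induction on the length of the zig-zag.
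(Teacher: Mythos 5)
Your proof is correct and follows essentially the same route as the paper: reduce via Propositions \ref{prop:res} and \ref{prop:transfer} to the case where $A^0$ is a localization of a polynomial ring over $\K$, use the uniqueness of rigid dualizing DG-modules to identify $R_A$ with $\mrm{R}\opn{Hom}_{A^0}(A,R_{A^0})$, apply Hom-tensor adjunction along $A^0 \to A$, and conclude with \cite[Theorem 1.2(1)]{AILN}. The only difference is one of exposition: you spell out the explicit identification $R_{\til{A}^0}\cong(f^0)^!(\K)$ and carefully verify that the transfer equivalences preserve finite flat dimension over $\K$ (a point the paper dispatches with ``it is clear''), but these are elaborations of the same argument rather than a different route.
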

\begin{proof}
Using Proposition \ref{prop:transfer}, 
it is clear that we may replace $A$ by the homologically equivalent $\til{A}$ from Proposition \ref{prop:res},
so we may assume that $A^0 \in \thecat$. 
But then, by Corollary \ref{cor:finite},
\[
\mrm{R}\opn{Hom}_{A^0}(A,R_{A^0})
\]
has the structure of a rigid dualizing DG-module over $A$ relative to $\K$, 
so by uniqueness of rigid dualizing DG-modules, we must have an isomorphism
\[
R_A \cong \mrm{R}\opn{Hom}_{A^0}(A,R_{A^0})
\]
Now, using adjunction, 
there is an isomorphism
\[
\mrm{R}\opn{Hom}_A(M,R_A) \cong \mrm{R}\opn{Hom}_{A^0}(M,R_{A^0}),
\]
so it is enough to show that the latter has finite flat dimension over $\K$,
and this follows from \cite[Theorem 1.2(1)]{AILN}.
\end{proof}

\begin{rem}
As $-\otimes^{!}_A -$ makes $\cat{D}^{+}_{\mrm{f}}(A)$ into a symmetric monoidal category,
it is tempting to ask if one can extend the isomorphism of Theorem \ref{thm:reduction} to any $M,N \in \cat{D}^{+}_{\mrm{f}}(A)$. 
Unfortunately, the answer to this is negative, for trivial reasons:
Let $\K$ be a Gorenstein noetherian ring of finite Krull dimension which is not a regular ring,
and let $A = \K$. 
Since $A$ has infinite global dimension,
one may find finitely generated $A$-modules $M,N$,
such that 
\[
\opn{Tor}^A_n(M,N) \ne 0
\]
for infinitely many $n$. 
It follows that
\[
\mrm{R}\opn{Hom}_{A\otimes^{\mrm{L}}_{\K} A}(A,M\otimes^{\mrm{L}}_{\K} N) = M\otimes^{\mrm{L}}_{\K} N \notin \cat{D}^{+}_{\mrm{f}}(A),
\]
so in particular
\[
\mrm{R}\opn{Hom}_{A\otimes^{\mrm{L}}_{\K} A}(A,M\otimes^{\mrm{L}}_{\K} N) \ncong M\otimes^{!}_A N.
\]
\end{rem}

\section{Rigid dualizing DG-modules and cohomologically essentially smooth maps}

Recall that a map $f:A \to B$ between two noetherian rings is called essentially smooth,
if $f$ is formally smooth and essentially of finite type. 
This implies that $f$ is flat, 
and that $\Omega^1_{B/A}$ is a finitely generated projective module.
Decomposing $\opn{Spec} B = \sqcup_{i=1}^n \opn{Spec} B_i$ to its connected components,
the induced map $A \to B_i$ is also essentially smooth,
and $\Omega^1_{B_i/A}$ is a projective module of fixed rank $n_i$.
We set 
\begin{equation}\label{eqn:dfn-of-omega}
\Omega_{B/A} := \bigoplus_{i=1}^n \Omega^{n_i}_{B_i/A}[n_i].
\end{equation}
This is a tilting complex over $B$, 
and there is an isomorphism 
\[
f^{!}(-) \cong \Omega_{B/A} \otimes_A -
\]
of functors $\cat{D}^{+}_{\mrm{f}}(A) \to \cat{D}^{+}_{\mrm{f}}(B)$. 
The goal of this section is to generalize these facts to DG-rings. 

An essentially smooth map $A \to B$ between noetherian rings is always flat, 
so that $B$ has flat dimension $0$ as an object of $\cat{D}(A)$. Similarly, we define

\begin{dfn}
Given a map $A \to B$ of DG-rings, 
we say it has flat dimension $0$ if $B$ has $0$ flat dimension relative to $\cat{D}(A)$. 
Explicitly, this means that given a DG $A$-module $M$, 
such that the diameter of $\mrm{H}(M)$ is $d$,
then the diameter of $\mrm{H}(M\otimes^{\mrm{L}}_A B)$ is smaller or equal to $d$.
\end{dfn}

Here is an important implication of this definition.

\begin{lem}\label{lem:fd0iso}
Let $A\to B$ be a DG-ring map of flat dimension $0$.
Then there is an isomorphism 
\[
\bar{A}\otimes^{\mrm{L}}_A B \cong \bar{B}.
\]
\end{lem}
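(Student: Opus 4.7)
The plan is to show that $\bar{A} \otimes^{\mrm{L}}_A B$ has cohomology concentrated in degree $0$ with $H^0 \cong \bar{B}$. First, I observe that $\bar{A}$, viewed as an object of $\cat{D}(A)$ via the canonical surjection $A \to \bar{A}$, is concentrated in degree $0$ and hence has cohomological diameter $0$. Applying the flat dimension $0$ hypothesis to $M = \bar{A}$, the object $\bar{A} \otimes^{\mrm{L}}_A B$ has cohomological diameter at most $0$; that is, it is either zero, or has cohomology supported in a single degree.

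Next, I would pin down this degree. Since $A$ is non-positive and both $\bar{A}$ and $B$ have cohomology concentrated in non-positive degrees, the derived tensor product does as well: one chooses a non-positive K-flat resolution $P \to \bar{A}$ (which exists because $A$ is non-positive), and observes that the ordinary tensor product $P \otimes_A B$ is then a non-positive DG-module. Combined with the diameter bound from the previous paragraph, this forces $\bar{A} \otimes^{\mrm{L}}_A B$ to be concentrated in degree $0$.

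Finally, I identify $H^0$ via the general right-exactness formula: for a non-positive DG-ring $A$ and DG-modules $M, N$ with $H^i(M) = H^i(N) = 0$ for $i > 0$, there is a natural isomorphism
\[
H^0(M \otimes^{\mrm{L}}_A N) \cong H^0(M) \otimes_{H^0(A)} H^0(N).
\]
This is routine: take non-positive K-flat resolutions $P \to M$, $Q \to N$, and inspect $P \otimes_A Q$ in degrees $0$ and $-1$, using that $(P \otimes_A Q)^0 = P^0 \otimes_{A^0} Q^0$ and that the image of the differential in degree $0$ is exactly generated by $d P^{-1} \otimes Q^0$ and $P^0 \otimes d Q^{-1}$. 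Applied to $M = \bar{A}$, $N = B$, we obtain $H^0(\bar{A} \otimes^{\mrm{L}}_A B) \cong \bar{A} \otimes_{\bar{A}} \bar{B} \cong \bar{B}$, yielding the desired isomorphism.

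I do not anticipate a serious obstacle here: the argument combines the flat dimension $0$ hypothesis with standard facts about the canonical $t$-structure on $\cat{D}(A)$ for $A$ a non-positive DG-ring. The only mildly delicate step is verifying the right-exactness formula above, but this is a straightforward direct computation with non-positive K-flat resolutions.
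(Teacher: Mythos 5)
Your argument follows essentially the same route as the paper (compute $H^0$ by a right-exactness / K\"unneth computation, then use the diameter bound to collapse the complex to degree $0$), but there is a genuine misstatement in the second step. The hypothesis ``flat dimension $0$'' only bounds the \emph{diameter} of the cohomology, so from diameter $\le 0$ plus non-positivity of $P \otimes_A B$ you may conclude that $\bar{A}\otimes^{\mrm{L}}_A B$ has cohomology concentrated in a single degree $\le 0$ --- but nothing so far forces that degree to be $0$; it could a priori be, say, $-3$ with $H^0 = 0$. What is missing at that point is precisely the non-vanishing of $H^0$. The paper handles this carefully: it first observes $H^0(\bar{A}\otimes^{\mrm{L}}_A B) \ne 0$ and only then invokes the flat dimension hypothesis to conclude concentration in degree $0$.

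The good news is that your own third step supplies exactly the missing ingredient: the right-exactness identification $H^0(\bar{A}\otimes^{\mrm{L}}_A B) \cong \bar{A}\otimes_{\bar{A}}\bar{B} \cong \bar{B}$ shows $H^0 \ne 0$ whenever $B \ne 0$ (and the case $B = 0$ is trivial). So the proof is correct once you reorder: compute $H^0 \cong \bar{B}$ first, note it is nonzero, and then the diameter-$\le 0$ bound forces the whole complex to be concentrated in degree $0$ and hence isomorphic to $\bar{B}$. Step 2, as an intermediate claim that non-positivity alone pins down degree $0$, should be dropped or replaced with the non-vanishing observation.
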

\begin{proof}
Note that $\mrm{H}^0(\bar{A}\otimes^{\mrm{L}}_A B) \ne 0$, 
so the flat dimension assumption implies that
\[
\bar{A}\otimes^{\mrm{L}}_A B \cong H^0(\bar{A}\otimes^{\mrm{L}}_A B).
\]
Since $B$ is non-positive, 
the latter is isomorphic by the Kunneth trick to
\[
\bar{A}\otimes_A \mrm{H}^0(B) = \bar{B}.
\]
\end{proof}

With this definition in hand, we arrive to the following definition of smoothness. 
See \cite[4.5.1]{Ga} for an equivalent definition in the finite type case,
and \cite[Section 2.2.2]{TV} for a detailed discussion.

\begin{dfn}
Let $f:A \to B$ be a DG-ring map between two cohomologically noetherian DG-rings.
We say that $f$ is cohomologically essentially smooth if
it has flat dimension $0$, 
and the induced map $\bar{f}:\bar{A}\to\bar{B}$ is essentially smooth.
\end{dfn}

\begin{rem}
To see why this is a good definition, 
note that if $A\to B$ is smooth in any reasonable sense, 
it must remain so after applying base change with respect to $A \to \bar{A}$,
and now the isomorphism of Lemma \ref{lem:fd0iso} which follows from the flat dimension $0$ assumption, implies that $\bar{A} \to \bar{B}$ is smooth.
\end{rem}

One of the most important properties of essentially smooth maps between noetherian rings is that they are Gorenstein.
We will now define a notion of a Gorenstein map in the DG-setting.
See \cite[Definition 7.3.2]{Ga} for an equivalent definition.

\begin{dfn}
Let $f:A \to B$ be a map between two cohomologically noetherian DG-rings, 
and assume that $A$ has dualizing DG-modules.
We say that $f$ is Gorenstein if for any dualizing DG-module $R$ over $A$, 
the DG-module $R\otimes^{\mrm{L}}_A B$ is a dualizing DG-module over $B$.
\end{dfn}

The next proposition is a DG generalization of \cite[Proposition II.5.14]{RD}.

\begin{prop}\label{prop:tensor-with-ffd}
Let $A$ be a cohomologically noetherian DG-ring, and let $M \in \cat{D}^{-}_{\mrm{f}}(A)$, $N \in \cat{D}^{+}(A)$, and $K \in \cat{D}^{\mrm{b}}(A)$, such that $K$ has finite flat dimension relative to $\cat{D}(A)$.
Then there is an isomorphism
\[
\mrm{R}\opn{Hom}_A(M,N) \otimes^{\mrm{L}}_A K \iso \mrm{R}\opn{Hom}_A(M,N\otimes^{\mrm{L}}_A K)
\]
in $\cat{D}(A)$, functorial in $M,N,K$.
\end{prop}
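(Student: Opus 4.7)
My plan is to realize the natural map concretely via explicit resolutions, handle the case $M = A$ trivially, extend by d\'evissage to finite semi-free $M$, and then bootstrap to pseudo-finite semi-free $M$ by truncating the basis in arbitrarily many cohomological degrees. Using that $A$ is cohomologically noetherian and $M \in \cat{D}^{-}_{\mrm{f}}(A)$, \cite[Proposition 1.18(2)]{Ye1} yields a pseudo-finite semi-free resolution $P \iso M$ with $P^{\natural} \cong \bigoplus_{i \le i_1} A^{\natural}[-i]^{\oplus r_i}$ and each $r_i < \infty$; let $Q \iso K$ be a K-flat resolution and $N \iso I$ a K-injective resolution. Then the $A$-linear chain map
\[
\opn{Hom}_A(P,I) \otimes_A Q \longrightarrow \opn{Hom}_A(P, I \otimes_A Q), \qquad \phi \otimes q \mapsto \bigl(p \mapsto (-1)^{|q||\phi|}\phi(p) \otimes q\bigr),
\]
represents the morphism in question and is functorial in $M, N, K$. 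When $M = A$ the map is the identity on $N \otimes^{\mrm{L}}_A K$, and the distinguished-triangle five-lemma applied iteratively then gives the isomorphism for any \emph{finite} semi-free DG-module (i.e.\ one with finitely many basis elements in total).

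To bootstrap to pseudo-finite, for each $m \ge 0$ let $P_m \subseteq P$ be the sub-DG-module generated by the basis elements of graded degree $\ge -m$. The non-positivity of $A$ forces the differential to strictly raise the graded degree of a basis element, so $P_m$ is stable under $d$, hence a finite semi-free sub-DG-module, and the quotient $P/P_m$ is semi-free with $(P/P_m)^{\natural}$ concentrated in degrees $\le -m-1$. The short exact sequences $0 \to P_m \to P \to P/P_m \to 0$ induce on both sides of the statement distinguished triangles that are compatible with the natural transformation, so it suffices to show that the cones at $P/P_m$ vanish on $\mrm{H}^n$ for every fixed $n$ once $m$ is large enough. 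Setting $e := \inf \mrm{H}(N)$ and taking $d_1 \ge 0$ a flat-dimension constant for $K$ with $\inf \mrm{H}(X \otimes^{\mrm{L}}_A K) \ge \inf \mrm{H}(X) - d_1$ for all $X$, and truncating $I$ so that $I^{j} = 0$ for $j < e$, the source-target degree count for homogeneous elements of $\opn{Hom}^n_A(P/P_m, I)$ (which must send degree $j \le -m-1$ in $P/P_m$ to degree $\ge e$ in $I$) gives $\opn{Hom}^n_A(P/P_m, I) = 0$ for $n \le e + m$. The flat-dimension bound then implies that both $\mrm{R}\opn{Hom}_A(P/P_m, N) \otimes^{\mrm{L}}_A K$ and, via the same count applied to $N \otimes^{\mrm{L}}_A K$ (which is bounded below by $e - d_1$), $\mrm{R}\opn{Hom}_A(P/P_m, N \otimes^{\mrm{L}}_A K)$ are concentrated in cohomological degrees $\ge e + m + 1 - d_1$. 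Since these lower bounds tend to $+\infty$ with $m$, for every fixed $n$ and $m$ large the comparison map collapses to the already-known one for the finite semi-free $P_m$.

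The main obstacle I anticipate is the cohomological bound on $\opn{Hom}_A(P/P_m, I)$: although $P/P_m$ has infinitely many basis elements and $\opn{Hom}_A(-, I)$ is a product-type construction, the non-positivity of $A$ together with the upper bound on the degrees of the basis elements of $P/P_m$ and the lower bound on $I$ conspire to force the Hom-complex to be bounded below in the predicted range. This argument genuinely uses the non-positivity hypothesis on $A$ and would break for general $\mathbb{Z}$-graded DG-rings; once the degree count is secured, the remaining manipulations are formal derived-functor bookkeeping.
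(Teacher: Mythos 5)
Your proof is correct and takes essentially the same underlying approach as the paper: the paper verifies that both functors have bounded above cohomological displacement and that the natural transformation is an isomorphism at $M=A$, then invokes \cite[Theorem 2.11(1)]{Ye1}, which is precisely the d\'evissage you carry out by hand (reduce to finite semi-free via the cohomological boundedness of $\opn{Hom}_A(P/P_m,I)$ and the flat-dimension constant for $K$). You have simply unpacked the cited theorem rather than quoting it.
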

\begin{proof}
Fixing $N,K$ as above, because of the boundedness assumptions on them, 
it is clear that both of the functors
$\mrm{R}\opn{Hom}_A(-,N) \otimes^{\mrm{L}}_A K$ and 
$\mrm{R}\opn{Hom}_A(-,N\otimes^{\mrm{L}}_A K)$
have bounded above cohomological displacement, in the sense of \cite[Definition 2.1(3)]{Ye1}.
Denoting by $\eta_M$ the natural morphism
\[
\eta_M : \mrm{R}\opn{Hom}_A(M,N) \otimes^{\mrm{L}}_A K \to \mrm{R}\opn{Hom}_A(M,N\otimes^{\mrm{L}}_A K)
\]
in $\cat{D}(A)$, 
obtained by replacing $M$ by a K-projective resolution,
and $K$ by a K-flat resolution, it is clear that $\eta_A$ is an isomorphism.
Hence, the result follows from \cite[Theorem 2.11(1)]{Ye1}.
\end{proof}

We shall need the next result which is due to Lurie.

\begin{prop}\label{prop:lurie-reduction}
Let $A$ be a cohomologically noetherian DG-ring, 
let $R \in \cat{D}^{\mrm{b}}_{\mrm{f}}(A)$,
and assume that 
\[
\mrm{R}\opn{Hom}_A(\bar{A},R)
\]
is a dualizing complex over $\bar{A}$.
Then $R$ is a dualizing DG-module over $A$.
\end{prop}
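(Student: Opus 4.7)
The plan is to bootstrap the dualizing property from $\bar{R} := \mrm{R}\opn{Hom}_A(\bar{A},R)$ over $\bar{A}$ up to $R$ over $A$, using the DG-ring map $\pi:A \to \bar{A}$ to reduce homological questions in $\cat{D}(A)$ to questions in $\cat{D}(\bar{A})$, where the answers are known by hypothesis.

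First I would record the key adjunction: for any $N \in \cat{D}(\bar{A})$ viewed in $\cat{D}(A)$ via the forgetful functor, the restriction-coinduction adjunction along $\pi$ gives a functorial isomorphism
\[
\mrm{R}\opn{Hom}_A(N,R) \cong \mrm{R}\opn{Hom}_{\bar{A}}(N,\bar{R})
\]
in $\cat{D}(\bar{A})$. Consequently, the functor $D_A := \mrm{R}\opn{Hom}_A(-,R)$, when restricted along the inclusion $\cat{D}^{\mrm{b}}_{\mrm{f}}(\bar{A}) \hookrightarrow \cat{D}^{\mrm{b}}_{\mrm{f}}(A)$, coincides with $D_{\bar{A}} := \mrm{R}\opn{Hom}_{\bar{A}}(-,\bar{R})$, and in particular preserves $\cat{D}^{\mrm{b}}_{\mrm{f}}(\bar{A})$.

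Next I would use the smart truncation triangles $\tau^{\le n-1}M \to \tau^{\le n}M \to H^n(M)[-n]$ in $\cat{D}(A)$ to present every $M \in \cat{D}^{\mrm{b}}_{\mrm{f}}(A)$ as a finite iterated extension of shifts of finitely generated $\bar{A}$-modules; finiteness of the iteration is guaranteed by the bounded cohomology of $M$. Combining this with the adjunction above, the cohomological amplitude of $D_A(M)$ for $M \in \cat{D}^{\mrm{b}}_{\mrm{f}}(A)$ is controlled by the injective dimension of $\bar{R}$ over $\bar{A}$ and the cohomological spread of $M$. To upgrade this bound from $\cat{D}^{\mrm{b}}_{\mrm{f}}(A)$ to all of $\cat{D}(A)$ and thereby obtain finite injective dimension of $R$ relative to $\cat{D}(A)$, I would invoke a way-out style transfer, for instance along the lines of \cite[Theorem 2.11]{Ye1}, using the fact that both functors $D_A(-)$ and $0$ have bounded cohomological displacement and agree (up to amplitude estimates) on the generating class of shifts of finitely generated $\bar{A}$-modules.

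To close the argument I would prove the biduality $\opn{id} \iso D_A \circ D_A$ on $\cat{D}^{\mrm{b}}_{\mrm{f}}(A)$ by induction on the number of nonvanishing cohomology groups of $M$. The base case $M \in \cat{D}^{\mrm{b}}_{\mrm{f}}(\bar{A})$ reduces, via two applications of the adjunction, to the fact that $\bar{R}$ is dualizing over $\bar{A}$. For the inductive step, applying $D_A D_A$ to the truncation triangle above and invoking the 5-lemma transports biduality from $\tau^{\le q-1}M$ and $H^q(M)[-q]$ to $M$. Specializing to $M = A \in \cat{D}^{\mrm{b}}_{\mrm{f}}(A)$ yields $A \iso \mrm{R}\opn{Hom}_A(R,R)$, which combined with the finite injective dimension from the previous paragraph completes the verification that $R$ is dualizing over $A$. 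The main obstacle I foresee is the extension of the amplitude bound from $\cat{D}^{\mrm{b}}_{\mrm{f}}(A)$ to arbitrary objects of $\cat{D}(A)$; the truncation/five-lemma mechanism is clean on bounded finite objects, but passing to unbounded DG-modules requires careful use of the way-out machinery for DG-rings.
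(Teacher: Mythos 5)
The paper's own proof of this proposition is a citation: it observes that the statement is a special case of Lurie's \cite[Proposition 4.3.8]{Lu}, and gives no further argument. Your proposal, in contrast, is a self-contained proof from the coinduction adjunction $\mrm{R}\opn{Hom}_A(N,R) \cong \mrm{R}\opn{Hom}_{\bar{A}}(N,\bar{R})$ for $N \in \cat{D}(\bar{A})$ together with smart truncation, so it is genuinely a different route. The biduality half of your argument is sound: the truncation triangle $\tau^{\le q-1}M \to M \to H^q(M)[-q]$ together with the base case (two uses of the adjunction plus the hypothesis that $\bar{R}$ is dualizing over $\bar{A}$) and the five lemma gives $\eta_M : M \iso D_A D_A(M)$ on $\cat{D}^{\mrm{b}}_{\mrm{f}}(A)$, and taking $M = A$ yields the homothety isomorphism $A \iso \mrm{R}\opn{Hom}_A(R,R)$. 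That part needs no prior knowledge of the injective dimension of $R$.

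The gap is exactly where you flag it, but the proposed repair is the wrong tool. You suggest invoking \cite[Theorem 2.11]{Ye1} to extend the amplitude bound from $\cat{D}^{\mrm{b}}_{\mrm{f}}(A)$ to $\cat{D}(A)$. That theorem transfers a natural \emph{isomorphism} between two functors that are already assumed to have bounded cohomological displacement; it does not establish bounded displacement, and using it to prove that $D_A$ has bounded displacement would be circular, since that is the very hypothesis it requires. Moreover, the finite injective dimension condition on $R$ relative to $\cat{D}(A)$ must hold against all DG-modules $N$, not merely those with finitely generated cohomology, so a way-out argument anchored only on $\cat{D}^{\mrm{b}}_{\mrm{f}}(A)$ cannot reach the full statement. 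The correct repair is more elementary and needs no citation: first observe that your base case already works for an \emph{arbitrary} $\bar{A}$-module $M$ (not just a finitely generated one), because $\bar{R}$ admits a bounded injective resolution over the noetherian ring $\bar{A}$, so $\mrm{R}\opn{Hom}_{\bar{A}}(M,\bar{R})$ has cohomology in a fixed finite range independent of $M$. The truncation induction then handles every $N \in \cat{D}(A)$ with bounded cohomology. For $N$ bounded above but not below, the bound $\inf H(\mrm{R}\opn{Hom}_A(N,R)) \ge \inf H(R) - \sup H(N)$ is automatic from $R \in \cat{D}^{\mrm{b}}(A)$. For $N$ bounded below but not above, fix $i$ and use the triangle $\tau^{\le n}N \to N \to \tau^{\ge n+1}N$: for $n$ large relative to $i$ and $\sup H(R)$ one has $\opn{Hom}_{\cat{D}(A)}(\tau^{\ge n+1}N, R[i]) = 0$, so $H^i(\mrm{R}\opn{Hom}_A(N,R))$ injects into $H^i(\mrm{R}\opn{Hom}_A(\tau^{\le n}N,R))$, which vanishes by the bounded case. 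Together with the boundedness and finite generation of $R$ built into the hypothesis $R \in \cat{D}^{\mrm{b}}_{\mrm{f}}(A)$, this completes the verification of all three conditions in the definition of a dualizing DG-module, giving an honest alternative to the citation in the paper.
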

\begin{proof}
This is a a particular case of \cite[Proposition 4.3.8]{Lu}.
\end{proof}

Here is the first main result of this section. 
See \cite[Corollary 7.3.6]{Ga} for a similar result (with a very different proof).

\begin{thm}\label{thm:gor-lifting}
Let $f:A \to B$ be a map between cohomologically noetherian DG-rings which is of flat dimension $0$,
and assume that $A$ has dualizing DG-modules, and that the induced map $\bar{f}:\bar{A}\to\bar{B}$ is Gorenstein.
Then $f$ is also Gorenstein.
\end{thm}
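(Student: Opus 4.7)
The plan is to reduce the claim to the analogous statement over $\bar A\to\bar B$ via Lurie's criterion (Proposition \ref{prop:lurie-reduction}). So the target is to show that for any dualizing DG-module $R$ over $A$, the object $R\otimes^{\mrm{L}}_A B$ lies in $\cat{D}^{\mrm{b}}_{\mrm{f}}(B)$ and that $\mrm{R}\opn{Hom}_B(\bar B,\,R\otimes^{\mrm{L}}_A B)$ is a dualizing complex over $\bar B$.

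First I would verify the finiteness properties. Boundedness of $R\otimes^{\mrm{L}}_A B$ follows immediately from the flat-dimension-$0$ hypothesis on $f$ applied to the bounded object $R$. For finite generation of the cohomology over $\bar B$, I would resolve $R$ by a pseudo-finite semi-free DG $A$-module using \cite[Proposition 1.18(2)]{Ye1} and note that tensoring such a resolution with $B$ over $A$ yields a pseudo-finite semi-free DG $B$-module, whose cohomology is finitely generated over $\bar B$ by \cite[Proposition 1.14(1)]{Ye1}.

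The heart of the argument is the chain of natural isomorphisms
\begin{align*}
\mrm{R}\opn{Hom}_B(\bar B,\,R\otimes^{\mrm{L}}_A B)
&\cong \mrm{R}\opn{Hom}_B(\bar A\otimes^{\mrm{L}}_A B,\,R\otimes^{\mrm{L}}_A B)\\
&\cong \mrm{R}\opn{Hom}_A(\bar A,\,R\otimes^{\mrm{L}}_A B)\\
&\cong \mrm{R}\opn{Hom}_A(\bar A,R)\otimes^{\mrm{L}}_A B\\
&\cong \mrm{R}\opn{Hom}_A(\bar A,R)\otimes^{\mrm{L}}_{\bar A}\bar B,
\end{align*}
where the first line is Lemma \ref{lem:fd0iso}, the second is the standard extension-of-scalars adjunction along $f$, the third is Proposition \ref{prop:tensor-with-ffd} applied with $M=\bar A\in\cat{D}^{\mrm{b}}_{\mrm{f}}(A)$, $N=R\in\cat{D}^{\mrm{b}}(A)$ and $K=B$ (which is bounded in $\cat{D}(A)$ and of flat dimension $0$ by hypothesis), and the fourth uses change of rings together with Lemma \ref{lem:fd0iso} once more.

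To finish, I would observe that the canonical surjection $A\to\bar A$ is cohomologically finite, as it induces the identity on $H^0$; hence \cite[Proposition 7.5(1)]{Ye1} guarantees that $R_{\bar A}:=\mrm{R}\opn{Hom}_A(\bar A,R)$ is a dualizing complex over $\bar A$. The Gorenstein hypothesis on $\bar f:\bar A\to\bar B$ then ensures $R_{\bar A}\otimes^{\mrm{L}}_{\bar A}\bar B$ is a dualizing complex over $\bar B$, and Proposition \ref{prop:lurie-reduction} yields that $R\otimes^{\mrm{L}}_A B$ is dualizing over $B$, as required. The only step I expect to demand genuine care is the application of Proposition \ref{prop:tensor-with-ffd}: one must keep track of the fact that $B$ is viewed both as a bounded object in $\cat{D}(A)$ and as the DG-ring over which the final module structure lives, so the isomorphism must be functorial in $K$ as a DG $B$-module — something already built into the construction of that proposition, but worth spelling out.
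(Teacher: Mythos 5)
Your proposal is correct and follows essentially the same strategy as the paper: reduce to $\bar{B}$ via Lurie's criterion (Proposition \ref{prop:lurie-reduction}), exploit the flat-dimension-$0$ hypothesis through Lemma \ref{lem:fd0iso} and Proposition \ref{prop:tensor-with-ffd}, and feed in the Gorenstein hypothesis on $\bar f$. The module-structure issue you flag at the end is exactly what the paper's proof handles explicitly, by working with a pseudo-finite semi-free resolution $\til{A}\iso\bar{A}$ and a K-flat resolution $\til{B}\iso B$ so that the key identifications are made at the chain level over $\til{A}\otimes_A\til{B}\iso\bar{B}$.
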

\begin{proof}
Let $\varphi:\til{B} \iso B$ be a K-flat resolution of $f$.
Since $\bar{A}$ is a ring which is a finite $\mrm{H}(A)$-algebra, 
it follows by \cite[Proposition 1.7(3)]{YZ1}, 
that there exists a (multiplicative!) pseudo-finite semi-free resolution 
$\psi: \til{A} \iso \bar{A}$ of the canonical map $A \to \bar{A}$.
In terms of these resolutions, 
the $0$-flat dimension assumption on $f$ implies by Lemma \ref{lem:fd0iso},
that there is a quasi-isomorphism
$\chi:\til{A}\otimes_A \til{B} \cong \bar{B}$.
Note further that $\til{A}$ is K-projective over $A$, 
and $\til{A}\otimes_A \til{B}$ is K-projective over $\til{B}$.

Let $R$ be a dualizing DG-module over $A$.
Because of cohomological finiteness of $A \to \bar{A}$,
\[
\mrm{R}\opn{Hom}_A(\bar{A},R)
\]
is a dualizing complex over the ring $\bar{A}$.
Since
\[
\opn{For}_{\psi}(\mrm{R}\opn{Hom}_A(\bar{A},R)) = \mrm{R}\opn{Hom}_A(\til{A},R),
\]
by Proposition \ref{prop:tensor-with-forget},
there is an isomorphism
\begin{equation}\label{eqn:til-to-bar}
\mrm{R}\opn{Hom}_A(\til{A},R) \otimes^{\mrm{L}}_{\til{A}} \bar{A} \cong \mrm{R}\opn{Hom}_A(\bar{A},R).
\end{equation}
The Gorenstein assumption on $\bar{f}$ says that
\[
\mrm{R}\opn{Hom}_A(\bar{A},R)\otimes^{\mrm{L}}_{\bar{A}} \bar{B}
\]
is a dualizing complex over $\bar{B}$, and using (\ref{eqn:til-to-bar}),
we see that
\[
\mrm{R}\opn{Hom}_A(\bar{A},R)\otimes^{\mrm{L}}_{\bar{A}} \bar{B} \cong \mrm{R}\opn{Hom}_A(\til{A},R)\otimes^{\mrm{L}}_{\til{A}} \bar{B}
\]
so that the latter is also a dualizing complex over $\bar{B}$. 
Hence,
\[
\opn{For}_{\chi}(\mrm{R}\opn{Hom}_A(\til{A},R)\otimes^{\mrm{L}}_{\til{A}} \bar{B}) \cong
\opn{Hom}_A(\til{A},R) \otimes_A \til{B}
\]
is a dualizing DG-module over $\til{A}\otimes_A \til{B}$.

Because $\til{A}$ is pseudo-finite semi-free over $A$, 
and since by assumption $B$ (and hence $\til{B}$) has finite flat dimension relative to $\cat{D}(A)$,
by the proof of Proposition \ref{prop:tensor-with-ffd}, 
the natural map
\[
\opn{Hom}_A(\til{A},R) \otimes_A \til{B} \to \opn{Hom}_A(\til{A},R \otimes_A \til{B})
\]
which is $\til{A}\otimes_A \til{B}$-linear, is a quasi-isomorphism.
By adjunction, there is an isomorphism
\[
\opn{Hom}_A(\til{A},R\otimes_A \til{B}) \cong
\opn{Hom}_{\til{B}}(\til{A}\otimes_A \til{B},R\otimes_A \til{B}) 
\]
of DG $\til{A}\otimes_A \til{B}$-modules, 
so we deduce that
\[
\opn{Hom}_{\til{B}}(\til{A}\otimes_A \til{B},R\otimes_A \til{B}) 
\]
is also a dualizing DG-module over $\til{A}\otimes_{A} \til{B}$.
It follows by \cite[Proposition 7.5(1)]{Ye1} that
\[
\mrm{R}\opn{Hom}_{\til{A}\otimes_A \til{B}}(\bar{B} , 
\mrm{R}\opn{Hom}_{\til{B}}(\til{A}\otimes_A \til{B},R\otimes_A \til{B}) ) \cong
\mrm{R}\opn{Hom}_{\til{B}}(\bar{B},R\otimes_A \til{B})
\]
is a dualizing complex over $\bar{B}$.

Finally note that $R\otimes^{\mrm{L}}_A \til{B} \in \cat{D}^{\mrm{b}}_{\mrm{f}}(\til{B})$, 
because $B$ is assumed to have finite flat dimension relative to $\cat{D}(A)$.
Hence, by Proposition \ref{prop:lurie-reduction},
$R\otimes_A \til{B}$ is a dualizing DG-module over $\til{B}$, 
so that $R\otimes^{\mrm{L}}_A B$ is a dualizing DG-module over $B$.
Hence $f$ is Gorenstein.
\end{proof}

Recall that if $A$ is any cohomologically noetherian DG-ring, 
then by \cite[Theorem 6.11]{Ye1}, 
the map $-\otimes^{\mrm{L}}_A \bar{A}$ induces a bijection between isomorphism classes of tilting DG-modules over $A$, 
and tilting complexes over $\bar{A}$.
Given a cohomologically essentially smooth map $f:A \to B$ between cohomologically noetherian DG-rings,
the tilting $\bar{B}$-complex $\Omega_{\bar{B}/\bar{A}}$ associated to the essentially smooth map $\bar{f}:\bar{A}\to \bar{B}$ 
was defined in (\ref{eqn:dfn-of-omega}).
For such a map $f$, we define $\Omega_{B/A}$ to be the unique tilting DG $B$-module, 
such that $\Omega_{B/A} \otimes^{\mrm{L}}_B \bar{B} \cong \Omega_{\bar{B}/\bar{A}}$.

\begin{cor}\label{cor:dual-smooth}
Let $A \to B$ be a cohomologically essentially smooth map between cohomologically noetherian DG-rings,
and let $R$ be a dualizing DG-module over $A$. 
Then $R\otimes^{\mrm{L}}_A \Omega_{B/A}$ is a dualizing DG-module over $B$.
\end{cor}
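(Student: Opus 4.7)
The plan is to reduce the statement to Theorem \ref{thm:gor-lifting} together with the stability of dualizing DG-modules under tensoring with tilting DG-modules.

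First I would verify that the reduction modulo cohomology gives a Gorenstein map. By hypothesis $f:A\to B$ is cohomologically essentially smooth, so $\bar{f}:\bar{A}\to\bar{B}$ is essentially smooth in the classical sense. Essentially smooth maps between noetherian rings are Gorenstein: given any dualizing complex $\bar{R}$ over $\bar{A}$, the complex $\bar{R}\otimes^{\mrm{L}}_{\bar{A}} \bar{B}$ is a dualizing complex over $\bar{B}$ (this is classical, and in fact one has the stronger statement $\bar{f}^{!}(\bar{R})\cong \bar{R}\otimes_{\bar{A}}^{\mrm{L}} \Omega_{\bar{B}/\bar{A}}$). Thus $\bar{f}$ is Gorenstein. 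Since $f$ has flat dimension $0$ by assumption, and $A$ has dualizing DG-modules (namely $R$), Theorem \ref{thm:gor-lifting} applies and shows that $f$ itself is Gorenstein.

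Consequently, $R\otimes^{\mrm{L}}_A B$ is a dualizing DG-module over $B$. Now $\Omega_{B/A}$ is by construction a tilting DG-module over $B$. Applying \cite[Theorem 7.10(1)]{Ye1}, which asserts that tensoring a dualizing DG-module by a tilting DG-module yields another dualizing DG-module, we conclude that
\[
(R\otimes^{\mrm{L}}_A B) \otimes^{\mrm{L}}_B \Omega_{B/A} \,\cong\, R\otimes^{\mrm{L}}_A \Omega_{B/A}
\]
is a dualizing DG-module over $B$, as required.

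The main obstacle in this argument is entirely concentrated in Theorem \ref{thm:gor-lifting}; once that lifting statement is in hand, the present corollary is essentially a formal combination of it with the classical fact that essentially smooth maps are Gorenstein and with the tilting-dualizing stability principle. No additional finiteness verification is needed, since the dualizing property of $R\otimes^{\mrm{L}}_A B$ already supplies membership in $\cat{D}^{\mrm{b}}_{\mrm{f}}(B)$ before tensoring with the tilting module $\Omega_{B/A}$.
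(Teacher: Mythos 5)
Your proposal is correct and follows essentially the same route as the paper: apply Theorem \ref{thm:gor-lifting} to conclude $f$ is Gorenstein (hence $R\otimes^{\mrm{L}}_A B$ is dualizing over $B$), then tensor with the tilting DG-module $\Omega_{B/A}$ and invoke \cite[Theorem 7.10(1)]{Ye1}. The only difference is that you spell out the classical fact that essentially smooth maps of noetherian rings are Gorenstein, which the paper takes for granted.
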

\begin{proof}
A cohomologically essentially smooth map satisfies the assumptions of Theorem \ref{thm:gor-lifting}, 
so it is Gorenstein. 
Hence, $R\otimes^{\mrm{L}}_A B$ is a dualizing DG-module over $B$, 
and since $\Omega_{B/A}$ is a tilting DG $B$-module, 
the result follows from \cite[Theorem 7.10(1)]{Ye1}.
\end{proof}

\begin{exa}\label{ex:qis-is-smooth}
If $f:A\to B$ is a quasi-isomorphism between cohomologically noetherian DG-ring,
then $f$ is cohomologically essentially smooth.
Since $\Omega_{\bar{B}/\bar{A}} = \bar{B}$, 
we have that $\Omega_{B/A} = B$.
\end{exa}

It is clear that the composition of two cohomologically essentially smooth maps is again cohomologically essentially smooth.
As for the differentials, we have, as in the classical case:

\begin{prop}\label{prop:compose-smooth}
Let $A \to B$ and $B \to C$ be two cohomologically essentially smooth maps between cohomologically noetherian DG-rings. Then the composed map $A \to C$ is also cohomologically essentially smooth, 
and there is an isomorphism
\[
\Omega_{B/A} \otimes^{\mrm{L}}_B \Omega_{C/B} \cong \Omega_{C/A}
\]
in $\cat{D}(C)$.
\end{prop}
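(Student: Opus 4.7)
The plan is to prove the two assertions in turn; the second will reduce, via the tilting lifting bijection of \cite[Theorem 6.11]{Ye1}, to the classical composition identity for Kähler differentials of essentially smooth maps of ordinary rings.

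For cohomological essential smoothness of the composition I check the two defining conditions. Given $M \in \cat{D}(A)$ of cohomological diameter $d$, associativity of the derived tensor product gives
\[
M \otimes^{\mrm{L}}_A C \cong (M \otimes^{\mrm{L}}_A B) \otimes^{\mrm{L}}_B C,
\]
and two successive applications of the flat dimension $0$ hypothesis (first on $A \to B$, then on $B \to C$) keep the cohomological diameter bounded by $d$. The induced map $\bar{A} \to \bar{C}$ factors as $\bar{A} \to \bar{B} \to \bar{C}$, a composition of essentially smooth maps of noetherian rings, hence essentially smooth by classical commutative algebra.

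For the isomorphism, set $T := \Omega_{B/A} \otimes^{\mrm{L}}_B \Omega_{C/B}$. I first argue that $T$ is a tilting DG $C$-module. Base change of a tilting module is tilting: if $P \otimes^{\mrm{L}}_B Q \cong B$ then $(P \otimes^{\mrm{L}}_B C) \otimes^{\mrm{L}}_C (Q \otimes^{\mrm{L}}_B C) \cong C$, so $\Omega_{B/A} \otimes^{\mrm{L}}_B C$ is tilting over $C$. Then $T$ is a derived tensor product of two tilting DG $C$-modules, which is itself tilting by a direct calculation using the duals. By \cite[Theorem 6.11]{Ye1} and the defining property of $\Omega_{C/A}$, to identify $T$ with $\Omega_{C/A}$ it is then enough to exhibit an isomorphism $T \otimes^{\mrm{L}}_C \bar{C} \cong \Omega_{\bar{C}/\bar{A}}$.

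For this last step, associativity gives
\[
T \otimes^{\mrm{L}}_C \bar{C} \cong \Omega_{B/A} \otimes^{\mrm{L}}_B (\Omega_{C/B} \otimes^{\mrm{L}}_C \bar{C}) \cong \Omega_{B/A} \otimes^{\mrm{L}}_B \Omega_{\bar{C}/\bar{B}},
\]
and since the $B$-module structure on $\Omega_{\bar{C}/\bar{B}}$ factors through $\bar{B}$, this in turn equals
\[
(\Omega_{B/A} \otimes^{\mrm{L}}_B \bar{B}) \otimes^{\mrm{L}}_{\bar{B}} \Omega_{\bar{C}/\bar{B}} \cong \Omega_{\bar{B}/\bar{A}} \otimes^{\mrm{L}}_{\bar{B}} \Omega_{\bar{C}/\bar{B}}.
\]
The conclusion follows from the classical identity $\Omega_{\bar{B}/\bar{A}} \otimes_{\bar{B}} \Omega_{\bar{C}/\bar{B}} \cong \Omega_{\bar{C}/\bar{A}}$, which holds connected-component by connected-component via the split short exact sequence of Kähler differentials and the explicit formula (\ref{eqn:dfn-of-omega}). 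The only point requiring real care is the book-keeping of shifts and ranks across the connected components of $\opn{Spec}\bar{C}$ in this last step; the rest of the argument is formal manipulation of derived tensor products.
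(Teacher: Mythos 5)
Your proof is correct and follows essentially the same route as the paper's: reduce to the classical identity $\Omega_{\bar{B}/\bar{A}} \otimes_{\bar{B}} \Omega_{\bar{C}/\bar{B}} \cong \Omega_{\bar{C}/\bar{A}}$ by applying $-\otimes^{\mrm{L}}_C \bar{C}$ and invoking the tilting bijection of \cite[Theorem 6.11]{Ye1}. Your explicit verification that $\Omega_{B/A} \otimes^{\mrm{L}}_B \Omega_{C/B}$ is tilting over $C$ (needed before the bijection can be used to conclude) is a helpful step that the paper leaves implicit.
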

\begin{proof}
The fact that $A \to C$ is cohomologically essentially smooth is clear from the definitions.
For the second claim, by the definition of $\Omega$, 
it is enough to show that these are isomorphic after applying $-\otimes^{\mrm{L}}_C \bar{C}$.
We have that
\begin{eqnarray}
(\Omega_{B/A} \otimes^{\mrm{L}}_B \Omega_{C/B} )\otimes^{\mrm{L}}_C \bar{C} \cong\nonumber\\
\Omega_{B/A} \otimes^{\mrm{L}}_B \Omega_{\bar{C}/\bar{B}} \cong\nonumber\\
\Omega_{B/A} \otimes^{\mrm{L}}_B \bar{B}\otimes^{\mrm{L}}_{\bar{B}} \bar{C} \otimes^{\mrm{L}}_{\bar{C}} \Omega_{\bar{C}/\bar{B}}\cong\nonumber\\
\Omega_{\bar{B}/\bar{A}} \otimes^{\mrm{L}}_{\bar{B}} \Omega_{\bar{C}/\bar{B}} \cong \Omega_{\bar{C}/\bar{A}},\nonumber
\end{eqnarray}
where the last isomorphism follows from \cite[Proposition 3.4]{YZ1}. This proves the result.
\end{proof}

\begin{cor}\label{cor:omega-qis}
Let $\til{A} \iso A$ and $\til{B} \to B$ be quasi-isomorphisms between cohomologically noetherian DG-rings, 
and let $A \to \til{B}$ be a cohomologically essentially smooth map. 
Then are isomorphisms
\[
\Omega_{B/A} \cong \Omega_{B/\til{A}}
\]
and
\[
\Omega_{\til{B}/A} \otimes^{\mrm{L}}_{\til{B}} B \cong \Omega_{B/A}
\]
in $\cat{D}(B)$
\end{cor}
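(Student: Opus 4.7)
The plan is to reduce everything to the composition behavior of $\Omega$ established in Proposition \ref{prop:compose-smooth}, together with the uniqueness of tilting DG-modules lifting a given tilting complex. First I would verify that all the relevant $\Omega$'s are defined: by Example \ref{ex:qis-is-smooth}, the quasi-isomorphisms $\til{A} \to A$ and $\til{B} \to B$ are themselves cohomologically essentially smooth, so by Proposition \ref{prop:compose-smooth} the composed maps
\[
A \to \til{B} \to B \quad\text{and}\quad \til{A} \to A \to \til{B} \to B
\]
are cohomologically essentially smooth. Hence $\Omega_{B/A}$, $\Omega_{B/\til{A}}$ and $\Omega_{\til{B}/A}$ are all well defined tilting DG-modules over $B$ or $\til{B}$.

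For the second isomorphism, I would apply Proposition \ref{prop:compose-smooth} directly to $A \to \til{B} \to B$, obtaining
\[
\Omega_{\til{B}/A} \otimes^{\mrm{L}}_{\til{B}} \Omega_{B/\til{B}} \cong \Omega_{B/A}
\]
in $\cat{D}(B)$. Since $\til{B} \to B$ is a quasi-isomorphism, Example \ref{ex:qis-is-smooth} gives $\Omega_{B/\til{B}} \cong B$, and substituting this yields the claim.

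For the first isomorphism, the key observation is that since $\til{A} \to A$ is a quasi-isomorphism, the induced map $\bar{\til{A}} \to \bar{A}$ is an isomorphism of rings, and therefore the essentially smooth maps $\bar{A} \to \bar{B}$ and $\bar{\til{A}} \to \bar{B}$ agree up to this canonical identification. In particular, the associated tilting complexes over $\bar{B}$ coincide, so
\[
\Omega_{B/A} \otimes^{\mrm{L}}_B \bar{B} \cong \Omega_{\bar{B}/\bar{A}} \cong \Omega_{\bar{B}/\bar{\til{A}}} \cong \Omega_{B/\til{A}} \otimes^{\mrm{L}}_B \bar{B}.
\]
Both $\Omega_{B/A}$ and $\Omega_{B/\til{A}}$ are tilting DG $B$-modules, and the bijection of \cite[Theorem 6.11]{Ye1} between isomorphism classes of tilting DG-modules over $B$ and tilting complexes over $\bar{B}$ given by $-\otimes^{\mrm{L}}_B \bar{B}$ then forces $\Omega_{B/A} \cong \Omega_{B/\til{A}}$, as required.

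I do not expect a genuine obstacle here: once the various maps are identified as cohomologically essentially smooth (via Example \ref{ex:qis-is-smooth} and Proposition \ref{prop:compose-smooth}), both isomorphisms become essentially formal consequences of already established results. The only point to be careful about is to invoke the uniqueness statement of \cite[Theorem 6.11]{Ye1} in the right place, so that comparing reductions modulo $\bar{B}$ is enough to conclude the first isomorphism.
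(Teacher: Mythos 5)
Your proof is correct. For the second isomorphism you take exactly the route the paper intends: apply Proposition \ref{prop:compose-smooth} to $A \to \til{B} \to B$ and use $\Omega_{B/\til{B}} \cong B$ from Example \ref{ex:qis-is-smooth}. For the first isomorphism, however, you diverge slightly from what the paper's terse ``combine'' proof suggests. The paper would apply Proposition \ref{prop:compose-smooth} a second time, to the composition $\til{A} \to A \to B$, obtaining $\Omega_{A/\til{A}} \otimes^{\mrm{L}}_A \Omega_{B/A} \cong \Omega_{B/\til{A}}$ and then cancelling $\Omega_{A/\til{A}} \cong A$. You instead observe directly that $\bar{\til{A}} \to \bar{A}$ is a ring isomorphism, so the two reductions $\Omega_{\bar{B}/\bar{A}}$ and $\Omega_{\bar{B}/\bar{\til{A}}}$ agree, and then invoke the bijection of \cite[Theorem 6.11]{Ye1} between tilting DG-modules over $B$ and tilting complexes over $\bar{B}$ to lift that agreement to $\Omega_{B/A} \cong \Omega_{B/\til{A}}$. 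This is essentially what the proof of Proposition \ref{prop:compose-smooth} does internally (it reduces to the $\mrm{H}^0$ level and invokes the same uniqueness), so your argument is a mild unwinding of the intended proof rather than a genuinely new idea; both are equally rigorous, and yours makes the dependence on the uniqueness of tilting lifts a bit more explicit.
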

\begin{proof}
Combine Example \ref{ex:qis-is-smooth} with Proposition \ref{prop:compose-smooth}.
\end{proof}

\begin{prop}\label{prop:smooth-base-change}
Let $A \to B$ be a cohomologically essentially smooth map  between cohomologically noetherian DG-rings,
and let $A \to C$ be a K-flat DG-ring map, such that $C$ and $B\otimes_A C$ are also cohomologically noetherian.
Then the induced map $C \to B\otimes_A C$ is also cohomologically essentially smooth,
and there is an isomorphism
\[
\Omega_{B/A} \otimes_A C \cong \Omega_{B\otimes_A C/C}
\]
in $\cat{D}(B\otimes_A C)$.
\end{prop}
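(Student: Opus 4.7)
The plan is to verify the two assertions in turn, using in both cases the description of cohomological smoothness via reduction modulo the augmentation.

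First I would establish that $g':C\to B\otimes_A C$ has flat dimension $0$: given $N\in\cat{D}(C)$, by associativity (and using K-flatness of $A\to C$ to identify $B\otimes_A C$ with $B\otimes^{\mrm{L}}_A C$), I get $N\otimes^{\mrm{L}}_C(B\otimes_A C)\cong N\otimes^{\mrm{L}}_A B$, whose cohomological diameter over $B\otimes_A C$ is bounded by that over $A$, which is bounded by the diameter of $N$ because $A\to B$ has flat dimension $0$. Next I would compute $\overline{B\otimes_A C}$: chaining isomorphisms
\[
\bar{B}\otimes^{\mrm{L}}_{\bar{A}}\bar{C}\cong(\bar{A}\otimes^{\mrm{L}}_A B)\otimes^{\mrm{L}}_{\bar{A}}\bar{C}\cong B\otimes^{\mrm{L}}_A\bar{C}\cong \bar{C}\otimes^{\mrm{L}}_C(B\otimes_A C)\cong\overline{B\otimes_A C},
\]
where the first isomorphism uses Lemma \ref{lem:fd0iso} for $A\to B$ and the last uses Lemma \ref{lem:fd0iso} applied to $g'$ (whose flat dimension $0$ I just verified). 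Since $\bar{A}\to\bar{B}$ is essentially smooth, it is flat, so $\bar{B}\otimes^{\mrm{L}}_{\bar{A}}\bar{C}$ is concentrated in degree $0$ and agrees with $\bar{B}\otimes_{\bar{A}}\bar{C}$. Classical base change for essentially smooth ring maps now shows $\bar{C}\to\bar{B}\otimes_{\bar{A}}\bar{C}\cong\overline{B\otimes_A C}$ is essentially smooth. This completes the proof that $C\to B\otimes_A C$ is cohomologically essentially smooth.

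For the identification of $\Omega$, I would appeal to the uniqueness statement \cite[Theorem 6.11]{Ye1}: a tilting DG-module over $B\otimes_A C$ is determined up to isomorphism by its base change to $\overline{B\otimes_A C}$. First, $\Omega_{B/A}\otimes_A C\cong \Omega_{B/A}\otimes^{\mrm{L}}_B(B\otimes_A C)$ is tilting over $B\otimes_A C$, being the base change of a tilting DG $B$-module along $B\to B\otimes_A C$ (the inverse of its tilting partner base-changes to an inverse). Then I would compute, using the isomorphism $\overline{B\otimes_A C}\cong\bar{B}\otimes_{\bar{A}}\bar{C}$ established above:
\[
(\Omega_{B/A}\otimes_A C)\otimes^{\mrm{L}}_{B\otimes_A C}\overline{B\otimes_A C}\cong \Omega_{B/A}\otimes^{\mrm{L}}_B(\bar{B}\otimes^{\mrm{L}}_{\bar{A}}\bar{C})\cong \Omega_{\bar{B}/\bar{A}}\otimes_{\bar{A}}\bar{C},
\]
and the classical identity $\Omega_{\bar{B}/\bar{A}}\otimes_{\bar{A}}\bar{C}\cong\Omega_{\bar{B}\otimes_{\bar{A}}\bar{C}/\bar{C}}$ for essentially smooth maps finishes the identification with $\Omega_{\overline{B\otimes_A C}/\bar{C}}$. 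By the defining property of $\Omega_{B\otimes_A C/C}$ and the uniqueness of tilting lifts, this yields the required isomorphism in $\cat{D}(B\otimes_A C)$.

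The main obstacle, and the step that really drives the argument, is the computation $\overline{B\otimes_A C}\cong\bar{B}\otimes_{\bar{A}}\bar{C}$: once this is in place both the smoothness of $\bar{C}\to\overline{B\otimes_A C}$ and the base-change formula for $\Omega$ are reduced to classical statements about ordinary rings via \cite[Theorem 6.11]{Ye1}. All other ingredients (preservation of tilting under base change, associativity and K-flatness bookkeeping) are routine.
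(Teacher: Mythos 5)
Your proof is correct and follows essentially the same route as the paper: first establish flat dimension $0$ of $C\to B\otimes_A C$ by rewriting $N\otimes^{\mrm{L}}_C(B\otimes_A C)\cong N\otimes^{\mrm{L}}_A B$, then apply Lemma \ref{lem:fd0iso} twice to identify $\overline{B\otimes_A C}\cong\bar{B}\otimes_{\bar{A}}\bar{C}$, deduce cohomological essential smoothness from the classical base change for $\bar{A}\to\bar{B}$, and finally match the two tilting DG-modules by checking their reductions modulo $\overline{B\otimes_A C}$ via \cite[Theorem 6.11]{Ye1}. The only cosmetic difference is that the paper invokes Lemma \ref{lem:box-tensor-of-tilting} to see that $\Omega_{B/A}\otimes_A C$ is tilting, whereas you argue directly that base change along $B\to B\otimes_A C$ preserves tilting; these are the same observation.
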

\begin{proof}
Let $M \in \cat{D}(C)$,
and assume that the diameter of $\mrm{H}(M)$ is $d$.
Since 
\[
M\otimes^{\mrm{L}}_C (C\otimes_A B) \cong M\otimes^{\mrm{L}}_A B,
\]
we see that the diameter of $\mrm{H}(M\otimes^{\mrm{L}}_C (C\otimes_A B))$ is $\le d$, 
so that $B\otimes_A C$ has flat dimension $0$ over $C$.
Using Lemma \ref{lem:fd0iso} twice, 
we have that
\begin{equation}\label{eqn:smooth-h0-base-change}
\mrm{H}^0(B\otimes_A C) \cong \bar{C}\otimes^{\mrm{L}}_C (B\otimes_A C) \cong \bar{C}\otimes^{\mrm{L}}_A B \cong \bar{C}\otimes^{\mrm{L}}_{\bar{A}} \bar{A}\otimes^{\mrm{L}}_A B = \bar{C}\otimes_{\bar{A}} \bar{B}.
\end{equation}
Since $\bar{A} \to \bar{B}$ is essentially smooth,
by base change $\bar{C} \to \bar{C}\otimes_{\bar{A}} \bar{B}$ is essentially smooth,
so we deduce that $C \to B\otimes_A C$ is  cohomologically essentially smooth.
To show the second claim, 
note first that by Lemma \ref{lem:box-tensor-of-tilting},
$\Omega_{B/A} \otimes_A C$ is a tilting $B\otimes_A C$ DG-module.
Thus, in order to show that the two tilting DG-modules 
$\Omega_{B/A} \otimes_A C $ and $\Omega_{B\otimes_A C/C}$ are isomorphic, 
it is enough to show that 
\begin{equation}\label{eqn:smooth-base-change-omega}
(\Omega_{B/A} \otimes_A C) \otimes^{\mrm{L}}_{B\otimes_A C} \mrm{H}^0(B\otimes_A C) \cong \Omega_{B\otimes_A C/C} \otimes^{\mrm{L}}_{B\otimes_A C} \mrm{H}^0(B\otimes_A C).
\end{equation}
The right hand side of (\ref{eqn:smooth-base-change-omega}) satisfies, 
by definition of $\Omega$ and by its base change property over commutative rings:
\[
\Omega_{B\otimes_A C/C} \otimes^{\mrm{L}}_{B\otimes_A C} \mrm{H}^0(B\otimes_A C) \cong
\Omega_{\mrm{H}^0(B\otimes_A C)/\bar{C}} \cong \Omega_{\bar{B}\otimes_{\bar{A}} \bar{C}/\bar{C}}
\cong \Omega_{\bar{B}/\bar{A}}\otimes_{\bar{A}} \bar{C}.
\]
On the other hand, 
and using the sequence of isomorphisms (\ref{eqn:smooth-h0-base-change}),
the left hand side of (\ref{eqn:smooth-base-change-omega}) satisfies:
\begin{eqnarray}
(\Omega_{B/A} \otimes_A C) \otimes^{\mrm{L}}_{B\otimes_A C} \mrm{H}^0(B\otimes_A C) \cong \nonumber\\
(\Omega_{B/A} \otimes_A C) \otimes^{\mrm{L}}_{B\otimes_A C} (B\otimes^{\mrm{L}}_A \bar{C}) \cong \nonumber\\
\Omega_{B/A} \otimes_A \bar{C} \cong \Omega_{B/A} \otimes^{\mrm{L}}_B B \otimes_A \bar{C} \cong \nonumber\\
\Omega_{B/A} \otimes^{\mrm{L}}_B (\bar{B}\otimes_{\bar{A}} \bar{C}) \cong \Omega_{\bar{B}/\bar{A}}\otimes_{\bar{A}} \bar{C},\nonumber
\end{eqnarray}
and this completes the proof.
\end{proof}

\begin{lem}\label{lem:hequiv-of-map}
Let $\K$ be a noetherian ring, 
and let $f:A \to B$ be a map in $\thecat$.
Then there are homological equivalences $\varphi: A \hequiv \til{A}$
and $\psi:B \hequiv \til{B}$ over $\K$,
and a map $\til{f}:\til{A} \to \til{B}$ in $\thecat$,
such that $f$ and $\til{f}$ are homologically equivalent in the sense of Definition \ref{dfn:equiv-of-morphisms},
and moreover, we have that $\til{A}^0, \til{B}^0 \in \thecat$,
and the map $\til{f}^0:\til{A}^0 \to \til{B}^0$ is a $\K$-algebra map.
\end{lem}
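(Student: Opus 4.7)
The approach is to resolve the source $A$ first, and then build a compatible resolution of $B$ as a DG-algebra over the resolved source. Apply Proposition \ref{prop:res} to $A$ to obtain a $\K$-linear homological equivalence $\varphi=(\varphi_1,\varphi_2)\colon A \xrightarrow{\varphi_1} A_{\opn{loc}} \xleftarrow{\varphi_2} \til{A}$ with $\til{A}^0 = S^{-1}(\K[x_1,\dots,x_n])$, so in particular $\til{A}^0 \in \thecat$. Lemma \ref{lem:var-res} then produces a $\K$-linear quasi-isomorphism $B \to B_{\opn{loc}}$ and a DG-ring map $\alpha\colon A_{\opn{loc}} \to B_{\opn{loc}}$ such that the resulting square with $f$ commutes. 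I then further localize $B_{\opn{loc}}$: let $V \subset B_{\opn{loc}}^0$ be the multiplicative set of elements whose image in $\bar{B}$ is a unit, and set $B' := B_{\opn{loc}} \otimes_{B_{\opn{loc}}^0} V^{-1}B_{\opn{loc}}^0$. The map $B_{\opn{loc}} \to B'$ is a quasi-isomorphism by the same argument as in Lemma \ref{lem:var-res}, every element of $B'^{\,0}$ mapping to a unit in $\bar{B}$ is already a unit, and $B'$ inherits a DG $\til{A}$-algebra structure via $\til{A} \xrightarrow{\varphi_2} A_{\opn{loc}} \xrightarrow{\alpha} B_{\opn{loc}} \to B'$.

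The main step is to construct a DG $\til{A}$-algebra $\til{B}$ with $\til{B}^0$ a localization of a polynomial ring over $\til{A}^0$, together with an $\til{A}$-linear quasi-isomorphism $\beta_2\colon \til{B} \to B'$. Since $\bar{A} \to \bar{B}$ is essentially of finite type and $\til{A}^0 \twoheadrightarrow \bar{A}$, the composition $\til{A}^0 \to \bar{B}$ is essentially of finite type, so one may write $\bar{B} \cong U^{-1}(\til{A}^0[y_1,\dots,y_m])/\bar{J}$ for some multiplicative set $U$ and finitely generated ideal $\bar{J}$. Set $\til{B}^0 := U^{-1}(\til{A}^0[y_1,\dots,y_m])$; this is a localization of a polynomial ring over $\K$, hence $\til{B}^0 \in \thecat$. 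Lift each generator $y_i$ to an element $b_i \in B'^{\,0}$, which is possible because $B'^{\,0} \twoheadrightarrow \bar{B}$ is surjective; since $U$ maps into $V$ and is therefore inverted in $B'^{\,0}$, the assignment $y_i \mapsto b_i$ extends to an $\til{A}^0$-algebra map $\til{B}^0 \to B'^{\,0}$ recovering the given presentation of $\bar{B}$ on cohomology. Then inductively adjoin finitely many free $\til{B}^0$-module generators in each negative degree to successively kill higher negative cohomology; this is the relative analogue (over $\til{A}$ rather than $\K$) of the eft-construction of \cite[Lemma 7.8]{Ye1}. Cohomological noetherianity of $B$ ensures that finitely many generators suffice in each degree, and the resulting $\til{B}$ lies in $\thecat$ by closure of $\thecat$ under $\K$-linear quasi-isomorphism.

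Let $\beta_1\colon B \to B'$ be the composed quasi-isomorphism $B \to B_{\opn{loc}} \to B'$, and let $\til{f}\colon \til{A} \to \til{B}$ be the structure map of $\til{B}$ as a DG $\til{A}$-algebra. Assembling the homological equivalence $\beta=(\beta_1,\beta_2)\colon B \hequiv \til{B}$ together with the vertical maps $\psi_1 := f$, $\psi_2 := (B_{\opn{loc}} \to B') \circ \alpha\colon A_{\opn{loc}} \to B'$, and $\psi_3 := \til{f}$, both squares
\[
\xymatrix{
A\ar[r]^{\varphi_1}\ar[d]_{\psi_1} & A_{\opn{loc}}\ar[d]^{\psi_2} & \til{A}\ar[l]_{\varphi_2}\ar[d]^{\psi_3}\\
B\ar[r]^{\beta_1} & B' & \til{B}\ar[l]_{\beta_2}
}
\]
commute by construction. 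This realizes a homological equivalence $f \hequiv \til{f}$ in the sense of Definition \ref{dfn:equiv-of-morphisms}, and $\til{f}^0$ is the canonical $\K$-algebra inclusion $\til{A}^0 \hookrightarrow \til{B}^0$.

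The main obstacle is the construction of the DG $\til{A}$-algebra resolution $\til{B} \to B'$: the preliminary further localization from $B_{\opn{loc}}$ to $B'$ is essential, because without it the multiplicative set $U$ need not invert in $B_{\opn{loc}}^{\,0}$, obstructing the existence of the $\til{A}^0$-algebra map $\til{B}^0 \to B_{\opn{loc}}^{\,0}$. Once this degree-zero map is in place, the inductive extension in negative degrees is routine but requires care to preserve both the $\til{A}$-algebra structure and the graded-commutative Koszul signs; this is of the same character as in the original construction underlying Proposition \ref{prop:res}.
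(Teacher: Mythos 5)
Your proposal is correct and follows essentially the same route as the paper. You resolve $A$ via Proposition \ref{prop:res} and Lemma \ref{lem:var-res}, further localize $B_{\opn{loc}}$ so that preimages of units in $\bar{B}$ become units (the paper's $B_{\opn{loc}}'$, your $B'$), use that $\til{A}^0 \to \bar{B}$ is essentially of finite type to produce a surjection from a localized polynomial ring $U^{-1}\til{A}^0[y_1,\dots,y_m]$ in degree zero that lifts through $B'^{\,0}$, and then build the rest of $\til{B}$ as a pseudo-finite semi-free DG $\til{A}$-algebra resolution of $B'$. The only difference is that the paper outsources this last inductive step to \cite[Proposition 1.7(2)]{YZ1}, whereas you sketch the mechanism directly; one small imprecision is that you speak of adjoining "free $\til{B}^0$-module generators," whereas in the commutative semi-free setting one adjoins polynomial and exterior algebra generators (so that, for example, degree $-2$ of $\til{B}$ contains both the degree-$(-2)$ generators and products of pairs of degree-$(-1)$ generators), but this is a phrasing issue rather than a gap, and you correctly point to the Ye1 Lemma 7.8 construction as the model.
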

\begin{proof}
By Lemma \ref{lem:var-res}, there is a commutative diagram
\[
\xymatrix{
A \ar[r]\ar[d] & A_{\opn{loc}} \ar[d] & \til{A}\ar[l] & \\
B \ar[r] & B_{\opn{loc}}
}
\]
in which all horizontal maps are quasi-isomorphisms,
and $\til{A}^0$ is a localization of a polynomial ring over $\K$. 
In particular, $\til{A}^0 \in \thecat$.

We will now make a process similiar to the construction in \cite[Lemma 7.8]{Ye1}, 
but in a slightly more general setting.
Let $\pi_{B_{\opn{loc}}} : B_{\opn{loc}} \to \bar{B}$ be the canonical map, 
and let $T$ be the set of elements $t$ in $B_{\opn{loc}}^0$ such that $\pi_{B_{\opn{loc}}}(t)$ is invertible in $\bar{B}$.
Set 
\[
B_{\opn{loc}}' := B_{\opn{loc}} \otimes_{B^0_{\opn{loc}}} T^{-1} B^0_{\opn{loc}}.
\]
The map $B_{\opn{loc}} \to B_{\opn{loc}}'$ induced by localization is clearly a quasi-isomorphism.
Since $\til{A}^0 \to \bar{B}$ is essentially of finite type, there is a map
\[
\varphi: \til{A}^0[t_1,\dots,t_n] \to \bar{B}
\]
which is surjective after localization. 
Let $U$ be the set of elements $u$ in $\til{A}^0[t_1,\dots,t_n]$,
such that $\varphi(u)$ is invertible in $\bar{B}$.
By construction, every preimage of a unit in $\bar{B}$ is a unit in $(B_{\opn{loc}}')^0$.
Hence, the map $\til{A} \to B_{\opn{loc}}'$ obtained from composing 
\[
\til{A} \to A_{\opn{loc}} \to B_{\opn{loc}} \to B_{\opn{loc}}'
\]
factors through 
\[
\til{A} \to \til{A}\otimes_{\til{A}^0} U^{-1}\til{A}^0[t_1,\dots,t_n] \to B_{\opn{loc}}',
\]
and the composed map
\[
\til{A}\otimes_{\til{A}^0} U^{-1}\til{A}^0[t_1,\dots,t_n] \to B_{\opn{loc}}' \to \bar{B}
\]
is surjective. 
Since $\mrm{H}^n(B_{\opn{loc}}')$ is a finitely generated $(\til{A}\otimes_{\til{A}^0} U^{-1}\til{A}^0[t_1,\dots,t_n] )^0$-module,
using \cite[Proposition 1.7(2)]{YZ1}, 
we may find a semi-free resolution
\[
\til{A}\otimes_{\til{A}^0} U^{-1}\til{A}^0[t_1,\dots,t_n] \to \til{B} \iso B_{\opn{loc}}' 
\]
such that $\til{B}^0 = (\til{A}\otimes_{\til{A}^0} U^{-1}\til{A}^0[t_1,\dots,t_n])^0$.
Using all these constructions, we obtain a commutative diagram
\[
\xymatrix{
A \ar[r]\ar[d] & A_{\opn{loc}} \ar[d]\ar[r] & A_{\opn{loc}}\ar[d] & \til{A}\ar[l]\ar[d] & \\
B \ar[r] & B_{\opn{loc}}\ar[r] & B_{\opn{loc}}' & \til{B}\ar[l]
}
\]
in which all horizontal maps are quasi-isomorphisms, 
and as $\til{B}^0 \in \thecat$, we are done.
\end{proof}

\begin{prop}\label{prop:pseudo-out-of-cat}
Let $\K$ be a Gorenstein noetherian ring of finite Krull dimension,
let $f:A \to B$ be a map in $\thecat$, 
and consider the commutative diagram
\[
\xymatrix{
A \ar[r]^{\pi_A}\ar[d]^{f} & \bar{A}\ar[d]^{\bar{f}}\\
B \ar[r]^{\pi_B} & \bar{B}
}
\]
Then there is an isomorphism
\[
(\bar{f})^{!}( \mrm{R}\opn{Hom}_A(\bar{A},R_A) ) \cong \mrm{R}\opn{Hom}_B(\bar{B},R_B)
\]
in $\cat{D}(\bar{B})$,
where $(\bar{f})^!$ is the twisted inverse image pseudofunctor from classical duality theory.
\end{prop}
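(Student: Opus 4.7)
The plan is to reduce the assertion to the corresponding functoriality statement for the classical twisted inverse image pseudofunctor over ordinary rings, by passing through the degree-zero components of nice resolutions.

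First I will apply Lemma \ref{lem:hequiv-of-map} to replace $f:A\to B$ by a homologically equivalent map $\til{f}:\til{A}\to \til{B}$ with $\til{A}^0,\til{B}^0\in\thecat$ and $\til{f}^0:\til{A}^0\to\til{B}^0$ a $\K$-algebra map. Since $H^0$ is unchanged under homological equivalence, the diagram $\bar{A}\xrightarrow{\bar{f}}\bar{B}$ and hence the classical $\bar{f}^!$ are unchanged. Combining Proposition \ref{prop:transfer}(3) with the uniqueness of rigid dualizing DG-modules (Corollary \ref{cor:group}) identifies the rigid dualizing DG-modules under the equivalence; applying Proposition \ref{prop:transfer-finite} to the cohomologically finite maps $A\to\bar{A}$ and $\til{A}\to\bar{A}$ (with the identity equivalence on $\bar{A}$) then gives
\[
\mrm{R}\opn{Hom}_A(\bar{A},R_A)\cong \mrm{R}\opn{Hom}_{\til{A}}(\bar{A},R_{\til{A}})
\]
in $\cat{D}(\bar{A})$, and symmetrically on the $B$ side. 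Thus I may assume $A^0,B^0\in\thecat$ and $f^0$ is a $\K$-algebra map.

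Next, since $A^0\to A$ is cohomologically finite and $A^0\in\thecat$, Corollary \ref{cor:finite}(2) together with Corollary \ref{cor:group} identify $R_A\cong \mrm{R}\opn{Hom}_{A^0}(A,R_{A^0})$, where $R_{A^0}$ is the classical rigid dualizing complex. Hom-tensor adjunction then yields
\[
\mrm{R}\opn{Hom}_A(\bar{A},R_A)\cong \mrm{R}\opn{Hom}_{A^0}(\bar{A},R_{A^0}).
\]
Since the canonical projection $A^0\to\bar{A}$ is a surjective (hence finite) ring map, classical Grothendieck duality rewrites the right-hand side as $(A^0\to\bar{A})^!(R_{A^0})$; symmetrically, $\mrm{R}\opn{Hom}_B(\bar{B},R_B)\cong (B^0\to\bar{B})^!(R_{B^0})$.

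To conclude, I will invoke the classical theorem---known in this setting by \cite{AIL1} (and by \cite{YZ1,YZ2} under stronger hypotheses on $\K$)---that the rigid dualizing complex of an ordinary ring in $\thecat$ is preserved under essentially finite type maps, i.e. $(f^0)^!(R_{A^0})\cong R_{B^0}$. Pseudofunctoriality of the classical twisted inverse image applied to the two factorizations $A^0\to\bar{A}\to\bar{B}$ and $A^0\to B^0\to\bar{B}$ of the same essentially finite type map then gives
\[
\bar{f}^!\bigl(\mrm{R}\opn{Hom}_A(\bar{A},R_A)\bigr)\cong \bar{f}^!\circ (A^0\to\bar{A})^!(R_{A^0}) \cong (B^0\to\bar{B})^!\circ(f^0)^!(R_{A^0}) \cong \mrm{R}\opn{Hom}_B(\bar{B},R_B).
\]
The main obstacle is the bookkeeping in the first paragraph: one has to verify that the equivalences $\eqfun$ appearing in Propositions \ref{prop:transfer} and \ref{prop:transfer-finite} do in fact leave $\bar{A}$ and $\bar{B}$ pointwise invariant in the appropriate sense, so that the identifications of $\mrm{R}\opn{Hom}_A(\bar{A},R_A)$ and $\mrm{R}\opn{Hom}_B(\bar{B},R_B)$ with their tilded counterparts are honest isomorphisms in $\cat{D}(\bar{A})$ and $\cat{D}(\bar{B})$ that intertwine the action of the classical $\bar{f}^!$. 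Once that reduction is in place, the remaining steps are formal consequences of classical duality and the results already established in the paper.
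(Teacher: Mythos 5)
Your proposal is correct and follows essentially the same route as the paper: reduce via Lemma~\ref{lem:hequiv-of-map} and Proposition~\ref{prop:transfer-finite} to the case where $A^0,B^0\in\thecat$, identify $R_A\cong\mrm{R}\opn{Hom}_{A^0}(A,R_{A^0})$ (and likewise for $B$) and use adjunction, then invoke the classical fact $(f^0)^!(R_{A^0})\cong R_{B^0}$ together with pseudofunctoriality of $(-)^!$ applied to the commutative square $A^0\to\bar{A}$, $B^0\to\bar{B}$. The only cosmetic difference is that the paper packages the pseudofunctoriality step as the natural isomorphism $(\bar f)^!\mrm{R}\opn{Hom}_{A^0}(\bar A,-)\cong\mrm{R}\opn{Hom}_{B^0}(\bar B,(f^0)^!(-))$, which is exactly your factorization through $(A^0\to\bar A)^!$ and $(B^0\to\bar B)^!$.
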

\begin{proof}
If we knew that $\bar{A}, \bar{B} \in \thecat$, 
then this will follow immediately from pseudo-functoriality of $(-)^{!}$
(a fact that will be shown in Proposition \ref{prop:pseudofunctor} below),
as the horizontal maps are clearly cohomologically finite.
As we do not know if $\bar{A}, \bar{B}$ have finite flat dimension over $\K$,
we take a different route to prove this.
In the general case, 
let 
\[
\varphi:A \heqv \til{A}, \quad \psi:B \heqv \til{B}
\]
be the homological equivalences constructed in Lemma \ref{lem:hequiv-of-map}.
Let $R_{\til{A}} := \eqfun_{\varphi}(R_A)$, 
and $R_{\til{B}} := \eqfun_{\psi}(R_B)$.
Applying Proposition \ref{prop:transfer-finite} to the diagrams
\[
\xymatrix{
A \ar[r]\ar[d] & A_{\opn{loc}} \ar[d]\ar[r] & A_{\opn{loc}}\ar[d] & \til{A}\ar[l]\ar[d] & \\
\bar{A} \ar[r] & \bar{A}\ar[r] & \bar{A} & \bar{A} \ar[l]
}
\]
and
\[
\xymatrix{
B \ar[r]\ar[d] & B_{\opn{loc}} \ar[d]\ar[r] & B_{\opn{loc}}'\ar[d] & \til{B}\ar[l]\ar[d] & \\
\bar{B} \ar[r] & \bar{B}\ar[r] & \bar{B} & \bar{B} \ar[l]
}
\]
in which the arrows in the bottom rows are identity morphisms, 
we obtain isomorphisms
\[
\mrm{R}\opn{Hom}_A(\bar{A},R_A) \cong \mrm{R}\opn{Hom}_{\til{A}}(\bar{A},R_{\til{A}}),\quad
\mrm{R}\opn{Hom}_B(\bar{B},R_B) \cong \mrm{R}\opn{Hom}_{\til{B}}(\bar{B},R_{\til{B}})
\]
in $\cat{D}(\bar{A})$ and $\cat{D}(\bar{B})$ respectively. 
Since clearly $\bar{f} = \mrm{H}^0(\til{f})$,
it follows that we may assume without loss of generality that $A = \til{A}$, $B = \til{B}$ and $f = \til{f}$.

Thus, $A^0,B^0 \in \thecat$, so that $A^0$ and $B^0$ have rigid dualizing complexes $R_{A^0}$ and $R_{B^0}$. 
Moreover, $A^0 \to A$ and $B^0 \to B$ are cohomologically finite maps in $\thecat$, 
so by Corollary \ref{cor:finite}, and by uniqueness of rigid dualizing DG-modules, there are isomorphisms
\[
R_A \cong \mrm{R}\opn{Hom}_{A^0}(A,R_{A^0}), \quad R_B \cong \mrm{R}\opn{Hom}_{B^0}(B,R_{B^0})
\]
in $\cat{D}(A)$ and $\cat{D}(B)$ respectively.
Now,  consider the commutative diagram
\[
\xymatrix{
A^0 \ar[r]\ar[d]^{f^0} & \bar{A}\ar[d]^{\bar{f}}\\
B^0 \ar[r] & \bar{B}
}
\]
obtained from composing the canonical maps $A^0 \to A \to \bar{A}$ and $B^0 \to B \to \bar{B}$.
Since the horizontal maps in this diagram are finite, 
and the vertical maps are essentially of finite type,
we deduce from classical duality theory that there is an isomorphism
\[
(\bar{f})^{!}(\mrm{R}\opn{Hom}_{A^0}(\bar{A},-) \cong \mrm{R}\opn{Hom}_{B^0}(\bar{B},(f^0)^{!}(-))
\]
of functors $\cat{D}^{+}_{\mrm{f}}(A^0) \to \cat{D}^{+}_{\mrm{f}}(\bar{B})$.
Applying this isomorphism to $R_{A^0}$, and using the fact that $(f^0)^!(R_{A^0}) \cong R_{B^0}$, 
we obtain an isomorphism
\begin{equation}\label{eqn:fshrik-of-finite}
(\bar{f})^{!}(\mrm{R}\opn{Hom}_{A^0}(\bar{A},R_{A^0}) \cong \mrm{R}\opn{Hom}_{B^0}(\bar{B},R_{B^0})
\end{equation}
in $\cat{D}(\bar{B})$.
By adjunction,
\[
\mrm{R}\opn{Hom}_{A^0}(\bar{A},R_{A^0}) \cong \mrm{R}\opn{Hom}_A(\bar{A},\mrm{R}\opn{Hom}_{A^0}(A,R_{A^0})) \cong 
\mrm{R}\opn{Hom}_A(\bar{A},R_A), 
\]
and similarly,
\[
\mrm{R}\opn{Hom}_{B^0}(\bar{B},R_{B^0}) \cong \mrm{R}\opn{Hom}_B(\bar{B},\mrm{R}\opn{Hom}_{B^0}(B,R_{B^0})) \cong 
\mrm{R}\opn{Hom}_B(\bar{B},R_B).
\]
Plugging these two isomorphisms into (\ref{eqn:fshrik-of-finite}),
we deduce that
\[
(\bar{f})^{!} (\mrm{R}\opn{Hom}_A(\bar{A},R_A)) \cong \mrm{R}\opn{Hom}_B(\bar{B},R_B)
\]
as claimed.
\end{proof}

Here is the second main result of this section.

\begin{thm}\label{thm:smooth}
Let $\K$ be a Gorenstein noetherian ring of finite Krull dimension, 
and let $f:A \to B$ be a cohomologically essentially smooth map in $\thecat$.
Then
\[
R_A \otimes^{\mrm{L}}_A \Omega_{B/A}
\]
has the structure of a rigid dualizing DG-module over $B$ relative to $\K$.
\end{thm}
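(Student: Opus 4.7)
The DG-module $S := R_A \otimes^{\mrm{L}}_A \Omega_{B/A}$ is a dualizing DG-module over $B$ by Corollary \ref{cor:dual-smooth}, and has finite flat dimension over $\K$ by Proposition \ref{prop:ffd}. In view of the uniqueness of the rigid dualizing DG-module (Corollary \ref{cor:group}), it suffices to exhibit an isomorphism $S \cong R_B$ in $\cat{D}(B)$: transporting the rigidifying isomorphism from $R_B$ along such an isomorphism will then put a rigid dualizing structure on $S$.

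To produce this isomorphism, \cite[Theorem 7.10(2)]{Ye1} yields a tilting DG-module $P$ over $B$ with $R_B \cong S \otimes^{\mrm{L}}_B P$, and by \cite[Theorem 6.11]{Ye1} it suffices to prove $P \otimes^{\mrm{L}}_B \bar{B} \cong \bar{B}$. Applying $\mrm{R}\opn{Hom}_B(\bar{B}, -)$ to both sides and using that $P$ is tilting, hence of finite flat dimension relative to $\cat{D}(B)$ by \cite[Theorems 6.5 and 5.11]{Ye1}, Proposition \ref{prop:tensor-with-ffd} gives
\[
\mrm{R}\opn{Hom}_B(\bar{B}, R_B) \cong \mrm{R}\opn{Hom}_B(\bar{B}, S) \otimes^{\mrm{L}}_{\bar{B}} (P \otimes^{\mrm{L}}_B \bar{B}).
\]
I would then identify each of the two $\mrm{R}\opn{Hom}_B(\bar{B}, -)$ terms with the same dualizing complex over $\bar{B}$, forcing $P \otimes^{\mrm{L}}_B \bar{B} \cong \bar{B}$ by cancellation.

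For the left-hand factor, Proposition \ref{prop:pseudo-out-of-cat} together with the classical formula $(\bar{f})^{!}(-) \cong (-) \otimes^{\mrm{L}}_{\bar{A}} \Omega_{\bar{B}/\bar{A}}$ for the essentially smooth ring map $\bar{f}:\bar{A} \to \bar{B}$ yields
\[
\mrm{R}\opn{Hom}_B(\bar{B}, R_B) \cong \mrm{R}\opn{Hom}_A(\bar{A}, R_A) \otimes^{\mrm{L}}_{\bar{A}} \Omega_{\bar{B}/\bar{A}}.
\]
For the other factor, I would rewrite $S = (R_A \otimes^{\mrm{L}}_A B) \otimes^{\mrm{L}}_B \Omega_{B/A}$, pull $\Omega_{B/A}$ outside the Hom via Proposition \ref{prop:tensor-with-ffd}, use Lemma \ref{lem:fd0iso} to replace $\bar{B}$ by $\bar{A} \otimes^{\mrm{L}}_A B$, apply the $\otimes$-Hom adjunction along $A \to B$, invoke Proposition \ref{prop:tensor-with-ffd} once more over $A$ (with $K = B$, exploiting flat dimension $0$), and finally use the defining identity $\Omega_{B/A} \otimes^{\mrm{L}}_B \bar{B} \cong \Omega_{\bar{B}/\bar{A}}$, to obtain
\[
\mrm{R}\opn{Hom}_B(\bar{B}, S) \cong \mrm{R}\opn{Hom}_A(\bar{A}, R_A) \otimes^{\mrm{L}}_{\bar{A}} \Omega_{\bar{B}/\bar{A}}.
\]
Since this common value is a dualizing complex over $\bar{B}$, cancellation in the displayed identity forces $P \otimes^{\mrm{L}}_B \bar{B} \cong \bar{B}$, whence $P \cong B$ and $S \cong R_B$, as desired.

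The main obstacle is the careful bookkeeping of boundedness and finite-flat-dimension hypotheses needed for each of the four invocations of Proposition \ref{prop:tensor-with-ffd}. These all follow from the cohomological noetherianness in $\thecat$, the boundedness of $R_A$ and $\Omega_{B/A}$, and the flat dimension $0$ assumption on $f$, but they are not entirely automatic. A further delicate point is that $\bar{A}$ and $\bar{B}$ need not themselves lie in $\thecat$, so classical Grothendieck duality cannot be applied directly to $\bar{f}$ in isolation; this is precisely the role played by Proposition \ref{prop:pseudo-out-of-cat}.
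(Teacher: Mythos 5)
Your proof is correct and reaches the same pivotal point as the paper's own argument: reduce the comparison of $S := R_A \otimes^{\mrm{L}}_A \Omega_{B/A}$ with $R_B$ to a comparison of the dualizing complexes $\mrm{R}\opn{Hom}_B(\bar{B},-)$ over $\bar{B}$, identify $\mrm{R}\opn{Hom}_B(\bar{B},R_B)$ via Proposition \ref{prop:pseudo-out-of-cat} and classical smooth base change, and compute $\mrm{R}\opn{Hom}_B(\bar{B},S)$ by pushing tilting factors through $\mrm{R}\opn{Hom}$ using Proposition \ref{prop:tensor-with-ffd}, Lemma \ref{lem:fd0iso}, and adjunction.

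The organizational differences from the paper are worth noting. The paper first passes to a K-flat resolution $\til{B} \iso B$ (using Corollary \ref{cor:finite} for the cohomologically finite quasi-isomorphism to lift back) and uses a pseudo-finite semi-free resolution $\til{A} \iso \bar{A}$, so that the key tensor products are underived on the nose and the forgetful functors along the quasi-isomorphism $\til{A}\otimes_A \til{B} \iso \bar{B}$ keep track of module structures. It then cites \cite[Corollary 7.12]{Ye1} directly, which says that $\mrm{R}\opn{Hom}_{\til{B}}(\bar{B},-)$ gives a bijection between isomorphism classes of dualizing DG-modules over $\til{B}$ and dualizing complexes over $\bar{B}$. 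You instead stay at the level of $B$ and $\cat{D}(B)$, apply Proposition \ref{prop:tensor-with-ffd} an extra time to absorb $B$ itself as the factor of finite flat dimension (using flat dimension $0$ of $f$), and reconstruct the bijection of \cite[Corollary 7.12]{Ye1} by hand from \cite[Theorem 7.10(2)]{Ye1} (tilting parametrization of dualizing DG-modules) together with \cite[Theorem 6.11]{Ye1} (reduction of tilting to $\bar{B}$) and a cancellation argument in the derived Picard group. Both routes are sound; yours is a bit slicker in that it avoids any explicit DG-ring resolutions, but you do have to implicitly keep track of $\bar{B}$-linearity of all the isomorphisms in the chain (the adjunction step $\mrm{R}\opn{Hom}_B(\bar{A}\otimes^{\mrm{L}}_A B, -) \cong \mrm{R}\opn{Hom}_A(\bar{A},-)$ is where this is most delicate), which is precisely what the paper's explicit resolutions and forgetful functors make transparent. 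The bookkeeping of boundedness and finite-flat-dimension hypotheses for the three uses of Proposition \ref{prop:tensor-with-ffd} that you flag does go through: in each case the Hom is against $\bar{A}$ or $\bar{B}$ which lies in $\cat{D}^{-}_{\mrm{f}}$, the target is bounded, and the factor being pulled out is either a tilting DG-module or $B$ itself with its flat dimension $0$.
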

\begin{proof}
As in the proof of Theorem \ref{thm:gor-lifting},
let $\varphi:\til{B} \iso B$ be a K-flat resolution of $f$ over $A$,
and let $\psi: \til{A} \iso \bar{A}$ be a pseudo-finite semi-free resolution 
of the canonical map $A \to \bar{A}$ over $A$. 
Denote by $\chi$ the quasi-isomorphism $\chi:\til{A}\otimes_A \til{B} \iso \bar{B}$.
Again, as in the proof of Theorem \ref{thm:gor-lifting}, 
$\til{A}$ is K-projective over $A$, 
and $\til{A}\otimes_A \til{B}$ is K-projective over $\til{B}$.
Using Corollary \ref{cor:omega-qis}, we have that
\[
\opn{For}_{\varphi}( R_A \otimes^{\mrm{L}}_A \Omega_{B/A}) \cong R_A\otimes^{\mrm{L}}_A \Omega_{\til{B}/A},
\]
so by Proposition \ref{prop:tensor-with-forget}, we have an isomorphism
\[
\mrm{R}\opn{Hom}_{\til{B}}(B, R_A\otimes^{\mrm{L}}_A \Omega_{\til{B}/A}) \cong R_A \otimes^{\mrm{L}}_A \Omega_{B/A}.
\]
in $\cat{D}(B)$.
Since $\varphi$ is cohomologically finite, 
it follows by Corollary \ref{cor:finite} applied to $\varphi$, 
that it is enough to show that there is an isomorphism
\[
R_A \otimes^{\mrm{L}}_A \Omega_{\til{B}/A} \cong R_{\til{B}}
\]
in $\cat{D}(\til{B})$.
By Corollary \ref{cor:dual-smooth}, 
$R_A \otimes^{\mrm{L}}_A \Omega_{\til{B}/A}$ is a dualizing DG-module over $\til{B}$,
and according to \cite[Corollary 7.12]{Ye1}, the formula
\[
R \mapsto \mrm{R}\opn{Hom}_{\til{B}}(\bar{B},R)
\]
induces a bijection between isomorphism classes of dualizing DG-modules over $\til{B}$,
and isomorphism classes of dualizing complexes over $\bar{B}$. 
Thus, it is enough to show that there is an isomorphism
\[
\mrm{R}\opn{Hom}_{\til{B}}(\bar{B},R_{\til{B}}) \cong \mrm{R}\opn{Hom}_{\til{B}}(\bar{B},R_A \otimes^{\mrm{L}}_A \Omega_{\til{B}/A})
\]
in $\cat{D}(\bar{B})$. 
By Proposition \ref{prop:pseudo-out-of-cat},
and using the fact that $\bar{A} \to \bar{B}$ is essentially smooth, 
there is an isomorphism
\[
\mrm{R}\opn{Hom}_{\til{B}}(\bar{B},R_{\til{B}}) \cong \mrm{R}\opn{Hom}_A(\bar{A},R_A) \otimes^{\mrm{L}}_{\bar{A}} \Omega_{\bar{B}/\bar{A}}
\]
in $\cat{D}(\bar{B})$. 
Thus, we need to show that there is an isomorphism
\[
\mrm{R}\opn{Hom}_A(\bar{A},R_A) \otimes^{\mrm{L}}_{\bar{A}} \Omega_{\bar{B}/\bar{A}} \cong \mrm{R}\opn{Hom}_{\til{B}}(\bar{B},R_A \otimes^{\mrm{L}}_A \Omega_{\til{B}/A})
\]
in $\cat{D}(\bar{B})$. As in the proof of Theorem \ref{thm:gor-lifting},
there is an isomorphism
\[
\mrm{R}\opn{Hom}_A(\bar{A},R_A) \cong \mrm{R}\opn{Hom}_A(\til{A},R_A)\otimes^{\mrm{L}}_{\til{A}} \bar{A},
\]
so using the fact that $\Omega_{\bar{B}/\bar{A}} \cong \bar{B}\otimes^{\mrm{L}}_{\til{B}} \Omega_{\til{B}/A}$, 
we have an isomorphism
\[
\mrm{R}\opn{Hom}_A(\bar{A},R_A) \otimes^{\mrm{L}}_{\bar{A}} \Omega_{\bar{B}/\bar{A}} \cong
\mrm{R}\opn{Hom}_A(\til{A},R_A)\otimes^{\mrm{L}}_{\til{A}} \bar{B}\otimes^{\mrm{L}}_{\til{B}} \Omega_{\til{B}/A}
\]
in $\cat{D}(\bar{B})$.
Hence,
\[
\opn{For}_{\chi}
(
\mrm{R}\opn{Hom}_A(\bar{A},R_A) \otimes^{\mrm{L}}_{\bar{A}} \Omega_{\bar{B}/\bar{A}}
)
\cong \mrm{R}\opn{Hom}_A(\til{A},R_A)\otimes^{\mrm{L}}_A \Omega_{\til{B}/A}.
\]
Let $P \iso \Omega_{\til{B}/A}$ be a K-flat resolution over $\til{B}$. 
Then $P$ is also K-flat over $A$.
Moreover, because it is tilting over $\til{B}$, 
by \cite[Theorem 5.11]{Ye1} it has finite flat dimension relative to $\cat{D}(\til{B})$.
Since $\til{B}$ has $0$ flat dimension relative to $\cat{D}(A)$, 
we conclude that $P$ has finite flat dimension relative to $\cat{D}(A)$.
Hence, by Proposition \ref{prop:tensor-with-ffd}, the canonical $\til{A}\otimes_A \til{B}$-linear map
\[
\mrm{R}\opn{Hom}_A(\til{A},R_A)\otimes^{\mrm{L}}_A \Omega_{\til{B}/A} \cong
\opn{Hom}_A(\til{A},R_A)\otimes_A P \to \opn{Hom}_A(\til{A},R_A\otimes_A P)
\]
is a quasi-isomorphism. Using adjunction, we have isomorphisms in $\cat{D}(\til{A}\otimes_A \til{B})$:
\begin{eqnarray}
\opn{Hom}_A(\til{A},R_A)\otimes_A P \to \opn{Hom}_A(\til{A},R_A\otimes_A P) \cong\nonumber\\
\opn{Hom}_{\til{B}}(\til{A}\otimes_A \til{B},R_A\otimes_A P) \cong\nonumber\\
\mrm{R}\opn{Hom}_{\til{B}}(\til{A}\otimes_A \til{B},R_A\otimes^{\mrm{L}}_A \Omega_{\til{B}/A}),\nonumber
\end{eqnarray}
and since
\[
\opn{For}_{\chi} ( \mrm{R}\opn{Hom}_{\til{B}}(\bar{B},R_A \otimes^{\mrm{L}}_A \Omega_{\til{B}/A}) ) 
\cong
\mrm{R}\opn{Hom}_{\til{B}}(\til{A}\otimes_A \til{B},R_A \otimes^{\mrm{L}}_A \Omega_{\til{B}/A}),
\]
we deduce that there is an isomorphism
\[
\opn{For}_{\chi}(\mrm{R}\opn{Hom}_A(\bar{A},R_A) \otimes^{\mrm{L}}_{\bar{A}} \Omega_{\bar{B}/\bar{A}})
\cong 
\opn{For}_{\chi} ( \mrm{R}\opn{Hom}_{\til{B}}(\bar{B},R_A \otimes^{\mrm{L}}_A \Omega_{\til{B}/A}) ) 
\]
in $\cat{D}(\til{A}\otimes_A \til{B})$. 
By Proposition \ref{prop:tensor-with-forget}, this lifts to an isomorphism
\[
\mrm{R}\opn{Hom}_A(\bar{A},R_A) \otimes^{\mrm{L}}_{\bar{A}} \Omega_{\bar{B}/\bar{A}}
\cong 
 \mrm{R}\opn{Hom}_{\til{B}}(\bar{B},R_A \otimes^{\mrm{L}}_A \Omega_{\til{B}/A}) 
\]
in $\cat{D}(\bar{B})$,
and as explained above, 
this implies that
\[
R_{\til{B}} \cong R_A \otimes^{\mrm{L}}_A \Omega_{\til{B}/A}
\]
and this completes the proof.
\end{proof}

\section{The twisted inverse image pseudofunctor and perfect base change}

In this section we finally arrive to the main construction of this paper: the twisted inverse image pseudofunctor.
We continue to denote, for $A \in \thecat$, its rigid dualizing DG-module by $R_A$, 
and the corresponding duality functor $\mrm{R}\opn{Hom}_A(-,R_A)$ by $D_A$.
Let us denote by $\mathbf{DerCat}_{\K}$ the 2-category of $\K$-linear trinagulated categories.
\begin{dfn}
Let $\K$ be a Gorenstein noetherian ring of finite Krull dimension.
The twisted inverse image pseudofunctor 
\[
(-)^{!} : \thecat \to \mathbf{DerCat}_{\K}
\]
is defined as follows:
\begin{enumerate}
\item Given $A \in \thecat$,
we associate to it the category $\cat{D}^{+}_{\mrm{f}}(A)$.
\item Given a map $f:A \to B$ in $\thecat$,
if $A = B$ and $f = 1_A$, we let $f^{!}$ be the identity functor $\cat{D}^{+}_{\mrm{f}}(A) \to \cat{D}^{+}_{\mrm{f}}(A)$.
Otherwise, we set 
\[
f^{!}(-) := D_B(B\otimes^{\mrm{L}}_A D_A(-)): \cat{D}^{+}_{\mrm{f}}(A) \to \cat{D}^{+}_{\mrm{f}}(B).
\]
\item Given two maps $f:A \to B$ and $g:B \to C$ in $\thecat$, we define an isomorphism of functors 
$\phi_{f,g}: (g\circ f)^{!} \iso g^{!}\circ f^{!}$: if at least one of the maps $f,g$ is the identity, 
then $\phi_{f,g}$ is the identity natural transformation. 
Otherwise, we let $\phi_{f,g}$ be the isomorphism
\begin{eqnarray}
 (g\circ f)^{!}(M) = D_C(C\otimes^{\mrm{L}}_A D_A(M)) \iso\nonumber\\
D_C(C\otimes^{\mrm{L}}_B B\otimes^{\mrm{L}}_A D_A(M)) \iso\nonumber\\
D_C(C\otimes^{\mrm{L}}_B D_B(D_B(B\otimes^{\mrm{L}}_A D_A(M)))) = g^{!}(f^{!}(M))\nonumber
\end{eqnarray}
obtained from combining the natural isomorphisms
$C\otimes^{\mrm{L}}_B B\otimes^{\mrm{L}}_A - \cong C\otimes^{\mrm{L}}_A -$ and $- \iso D_B(D_B(-))$.
\end{enumerate}
\end{dfn}

\begin{prop}\label{prop:pseudofunctor}
Let $\K$ be a Gorenstein noetherian ring of finite Krull dimension.
Given three maps $f:A \to B$, $g:B \to C$ and $h:C \to D$ in $\thecat$,
there is an equality 
\[
\phi_{g,h} \circ \phi_{f,h\circ g} = \phi_{f,g} \circ \phi_{g\circ f,h},
\]
so that $(-)^{!} : \thecat \to \mathbf{DerCat}_{\K}$ is a pseudofunctor.
\end{prop}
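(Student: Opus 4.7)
\medskip

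\noindent\textbf{Proof plan.} The plan is to verify this cocycle identity by unfolding both sides into compositions of two very basic kinds of canonical isomorphisms, and then invoking naturality plus coherence to conclude. The two basic types are: (a) the associativity isomorphism for the derived tensor product, of the form $E\otimes^{\mrm{L}}_A M \iso E\otimes^{\mrm{L}}_Y Y\otimes^{\mrm{L}}_A M$ for a ring map $A\to Y\to E$; and (b) the biduality isomorphism $\eta_Y : N \iso D_Y(D_Y(N))$ for $N\in\cat{D}^{+}_{\mrm{f}}(Y)$, which exists and is natural whenever $R_Y$ is dualizing (by definition of a dualizing DG-module).

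First I would handle the degenerate cases separately: if any of $f,g,h$ is an identity (or if any of the composites $g\circ f$, $h\circ g$, $h\circ g\circ f$ happens to be an identity, in the sense of the definition), then by construction at least one of the four $\phi$'s in the equation is the identity natural transformation, and the equation reduces to either a tautology or to an already-established $\phi_{u,v}=\phi_{u,v}$. So one may assume throughout that the four $\phi$'s involved are all non-trivial, i.e.\ defined by the composite of an associativity step followed by a biduality step as in the definition.

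Next, writing $X := D_A(M)$, I would evaluate both sides of the identity at an object $M \in \cat{D}^{+}_{\mrm{f}}(A)$ and unwind each composite explicitly as a sequence of four elementary steps performed inside $D_D(-)$: two associativity steps of type (a), namely $D\otimes^{\mrm{L}}_A(-) \to D\otimes^{\mrm{L}}_B B\otimes^{\mrm{L}}_A(-)$ and $D\otimes^{\mrm{L}}_B(-) \to D\otimes^{\mrm{L}}_C C\otimes^{\mrm{L}}_B(-)$ (possibly combined via the pentagon into $D\otimes^{\mrm{L}}_A(-) \to D\otimes^{\mrm{L}}_C C\otimes^{\mrm{L}}_A(-)$ for the other bracketing); and two biduality steps of type (b), namely insertions of $\eta_B$ on $B\otimes^{\mrm{L}}_A X$ and of $\eta_C$ on the appropriate $C$-object. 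The pentagon/Mac Lane coherence for the associator of the derived tensor product (applied to the chain of maps $A\to B\to C\to D$) identifies the two possible associativity compositions $D\otimes^{\mrm{L}}_A X \rightsquigarrow D\otimes^{\mrm{L}}_C C\otimes^{\mrm{L}}_B B\otimes^{\mrm{L}}_A X$, while naturality of the biduality transformations $\eta_B$ and $\eta_C$ ensures that the two biduality insertions can be performed in either order, and before or after the associativity moves, with identical outcome.

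The final step is to assemble these observations into a commutative diagram whose two boundary compositions are precisely the two sides of the desired equation, and to verify commutativity cell-by-cell using only the pentagon for associativity, the naturality of $\eta_B, \eta_C$, and the fact that $D_Y$ is a triangulated functor. The main obstacle I anticipate is purely bookkeeping: keeping track of at which ``level'' (inside how many nested applications of $D_?$ and $?\otimes^{\mrm{L}}_?(-)$) each elementary move is applied, and being careful that the isomorphism identifying $C\otimes^{\mrm{L}}_B B\otimes^{\mrm{L}}_A X$ with $C\otimes^{\mrm{L}}_A X$ used in $\phi_{f,g}$ agrees with the corresponding constituent of the associator coming from $\phi_{f,h\circ g}$ and $\phi_{g\circ f,h}$; this is, however, exactly what coherence of the derived tensor product guarantees, so the diagram chase goes through.
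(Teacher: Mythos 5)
The paper does not give this proof explicitly; it is disposed of by the single line ``Identical to \cite[Proposition 4.4]{YZ2}.'' Your proposal is, in effect, the reconstruction of that diagram chase, and it is essentially correct. Unwinding each $\phi$ inside the outermost $D_D(-)$ into one base-change step (inserting an intermediate ring into $?\otimes^{\mrm{L}}_A D_A(M)$) followed by one biduality step ($\eta_Y:\mathrm{id}\to D_Y D_Y$), and then commuting the two sides using naturality of $\eta_B$, $\eta_C$, naturality of the base-change maps, and the compatibility of the two ways of going from $D\otimes^{\mrm{L}}_A X$ to $D\otimes^{\mrm{L}}_C C\otimes^{\mrm{L}}_B B\otimes^{\mrm{L}}_A X$, does verify the cocycle identity; this is the same verification as in Yekutieli--Zhang. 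Two small points of precision: (i) what you invoke as the ``pentagon/Mac Lane coherence for the associator'' is not literally the monoidal pentagon but rather the cocycle condition for the base-change pseudofunctor $\mrm{L}(-)^*$ along $A\to B\to C\to D$ (i.e.\ that $\mrm{L}h^*\mrm{L}g^*\cong\mrm{L}(gh)^*$ compatibly over triple composites), which is what actually identifies your two associativity routes---the distinction is terminological and does not affect the argument; (ii) the invocation of $D_Y$ being triangulated is unnecessary here, only ordinary (contravariant) functoriality and naturality of biduality are used. Neither affects the soundness of the proof.
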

\begin{proof}
Identical to \cite[Proposition 4.4]{YZ2}.
\end{proof}

Next, we study the behaivor of the twisted inverse image pseudofunctor with respect to cohomologically finite and cohomologically essentially smooth maps.

\begin{thm}\label{theorem:finite}
Let $\K$ be a Gorenstein noetherian ring of finite Krull dimension,
and let $f:A \to B$ be a cohomologically finite map in $\thecat$.
Then there is an isomorphism
\[
f^{!}(M) \cong \mrm{R}\opn{Hom}_A(B,M)
\]
of functors $\cat{D}^{+}_{\mrm{f}}(A) \to \cat{D}^{+}_{\mrm{f}}(B)$.
\end{thm}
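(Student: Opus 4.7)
The plan is straightforward and essentially formal: unfold the definition of $f^{!}$, insert the explicit description of $R_B$ provided by the cohomologically finite case of the existence theorem, and then manipulate by standard adjunctions and biduality.

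First, I would identify $R_B$. Since $f$ is cohomologically finite, Corollary \ref{cor:finite}(2) asserts that $f^{\flat}(R_A) = \mrm{R}\opn{Hom}_A(B, R_A)$ carries the structure of a rigid dualizing DG-module over $B$ relative to $\K$. Combined with the uniqueness of the rigid dualizing DG-module (Corollary \ref{cor:group}), this produces a canonical isomorphism $R_B \cong \mrm{R}\opn{Hom}_A(B, R_A)$ in $\cat{D}(B)$.

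Next, I would perform the manipulation. By definition,
\[
f^{!}(M) = \mrm{R}\opn{Hom}_B\bigl(B \otimes^{\mrm{L}}_A D_A(M), R_B\bigr).
\]
Substituting the identification of $R_B$ and applying the standard adjunction $\mrm{R}\opn{Hom}_B(B \otimes^{\mrm{L}}_A -, -) \cong \mrm{R}\opn{Hom}_A(-, \opn{For}_f(-))$ along $f$ yields
\[
f^{!}(M) \cong \mrm{R}\opn{Hom}_A\bigl(D_A(M), \mrm{R}\opn{Hom}_A(B, R_A)\bigr).
\]
The symmetric form of Hom-$\otimes$ adjunction gives the swap isomorphism $\mrm{R}\opn{Hom}_A(X, \mrm{R}\opn{Hom}_A(Y, Z)) \cong \mrm{R}\opn{Hom}_A(Y, \mrm{R}\opn{Hom}_A(X, Z))$ (deduced from commutativity of $\otimes^{\mrm{L}}_A$), so the above becomes
\[
f^{!}(M) \cong \mrm{R}\opn{Hom}_A\bigl(B, D_A D_A(M)\bigr).
\]
Finally, since $R_A$ is a dualizing DG-module over $A$ and $M \in \cat{D}^{+}_{\mrm{f}}(A)$, biduality yields $D_A D_A(M) \cong M$, producing the desired $f^{!}(M) \cong \mrm{R}\opn{Hom}_A(B, M)$.

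Each of the isomorphisms above is natural in $M$, so pasting them together gives an isomorphism of functors, as required. I do not anticipate any genuine obstacle: all the ingredients—existence of $R_B$ in the cohomologically finite case, uniqueness of rigid dualizing DG-modules, the tensor-Hom and swap adjunctions, and biduality for dualizing DG-modules—are already available from the preceding sections of the paper, and the argument is parallel to the classical one in Grothendieck duality over ordinary rings.
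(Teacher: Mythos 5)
Your proof is correct and follows essentially the same route as the paper's: identify $R_B \cong \mrm{R}\opn{Hom}_A(B,R_A)$ via Corollary \ref{cor:finite} and uniqueness, then unwind $f^{!}(M)$ by a chain of standard adjunctions (you use extension-of-scalars adjunction followed by the symmetric Hom-swap, while the paper uses the Hom-Hom adjunction $\mrm{R}\opn{Hom}_B(X,\mrm{R}\opn{Hom}_A(B,R_A)) \cong \mrm{R}\opn{Hom}_A(X,R_A)$ followed by tensor-Hom, landing at the same expression), and finish with biduality for the dualizing DG-module $R_A$.
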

\begin{proof}
Given $M \in \cat{D}^{+}_{\mrm{f}}(A)$, we have by Corollary \ref{cor:finite}
\begin{eqnarray}
f^{!}(M) = \mrm{R}\opn{Hom}_B(B\otimes^{\mrm{L}}_A\mrm{R}\opn{Hom}_A(M,R_A),R_B) \cong\nonumber\\
\mrm{R}\opn{Hom}_B(B\otimes^{\mrm{L}}_A\mrm{R}\opn{Hom}_A(M,R_A),\mrm{R}\opn{Hom}_A(B,R_A))\nonumber.
\end{eqnarray}
By adjunction, we have isomorphisms
\begin{eqnarray}
\mrm{R}\opn{Hom}_B(B\otimes^{\mrm{L}}_A\mrm{R}\opn{Hom}_A(M,R_A),\mrm{R}\opn{Hom}_A(B,R_A))\cong\nonumber\\
\mrm{R}\opn{Hom}_A(B\otimes^{\mrm{L}}_A\mrm{R}\opn{Hom}_A(M,R_A),R_A)\cong\nonumber\\
\mrm{R}\opn{Hom}_A(B,\mrm{R}\opn{Hom}_A(\mrm{R}\opn{Hom}_A(M,R_A),R_A)\nonumber,
\end{eqnarray}
so the result follows from the fact that $R_A$ is a dualizing DG-module.
\end{proof}

\begin{cor}\label{cor:twist-qis}
Let $\K$ be a Gorenstein noetherian ring of finite Krull dimension,
and let $f:A \to B$ be a quasi-isomorphism in $\thecat$.
Then there are isomorphisms
\[
f^{!}(M) \cong \mrm{R}\opn{Hom}_A(B,M) \cong B\otimes^{\mrm{L}}_A M
\]
of functors $\cat{D}^{+}_{\mrm{f}}(A) \to \cat{D}^{+}_{\mrm{f}}(B)$.
\end{cor}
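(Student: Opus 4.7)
The plan is to obtain the corollary as an immediate composition of two results established earlier in the paper. First I would observe that every quasi-isomorphism $f:A \to B$ is cohomologically finite: by definition the induced map $\bar{f}:\bar{A}\to \bar{B}$ is an isomorphism of rings, hence in particular a finite ring homomorphism. Theorem \ref{theorem:finite} therefore applies and provides, functorially in $M$, the first isomorphism $f^{!}(M) \cong \mrm{R}\opn{Hom}_A(B,M)$ of functors $\cat{D}^{+}_{\mrm{f}}(A)\to\cat{D}^{+}_{\mrm{f}}(B)$.

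For the second isomorphism I would feed $N := B\otimes^{\mrm{L}}_A M \in \cat{D}(B)$ into the second and third functorial isomorphisms of Proposition \ref{prop:tensor-with-forget}. The second yields $f_*(B\otimes^{\mrm{L}}_A M)\cong M$ in $\cat{D}(A)$, and substituting this into the third gives
\[
\mrm{R}\opn{Hom}_A(B,M) \cong \mrm{R}\opn{Hom}_A\bigl(B,\, f_*(B\otimes^{\mrm{L}}_A M)\bigr) \cong B\otimes^{\mrm{L}}_A M
\]
in $\cat{D}(B)$, which is exactly what is required.

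There is no real obstacle here, since the whole statement reduces to chaining Theorem \ref{theorem:finite} with two of the three isomorphisms of Proposition \ref{prop:tensor-with-forget}. The only point worth double-checking is that the composite isomorphism actually lands in $\cat{D}^{+}_{\mrm{f}}(B)$; but since $f$ is a quasi-isomorphism, the functors $B\otimes^{\mrm{L}}_A -$ and $f_*$ are mutually quasi-inverse equivalences between $\cat{D}(A)$ and $\cat{D}(B)$, and they preserve both boundedness below of cohomology and finite generation of cohomology over $\bar{A}\cong\bar{B}$, so everything remains inside the expected subcategory. Functoriality in $M$ is built into both cited results, so naturality of the composite is automatic.
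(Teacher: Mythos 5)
Your proposal is correct and follows essentially the same route as the paper: the first isomorphism comes from Theorem \ref{theorem:finite} applied to the cohomologically finite map $f$, and the second comes from Proposition \ref{prop:tensor-with-forget}. The paper's proof is a one-liner; your version simply spells out the chaining of the second and third isomorphisms of Proposition \ref{prop:tensor-with-forget}, which is the same argument made explicit.
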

\begin{proof}
The first isomorphism follows from Theorem \ref{theorem:finite},
and the second one from Proposition \ref{prop:tensor-with-forget}.
\end{proof}

\begin{thm}\label{theorem:smooth}
Let $\K$ be a Gorenstein noetherian ring of finite Krull dimension,
and let $f:A \to B$ be a cohomologically essentially smooth map in $\thecat$.
Then there is an isomorphism
\[
f^{!}(M) \cong M\otimes^{\mrm{L}}_A \Omega_{B/A}
\]
of functors $\cat{D}^{+}_{\mrm{f}}(A) \to \cat{D}^{+}_{\mrm{f}}(B)$.
\end{thm}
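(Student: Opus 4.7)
The plan is to combine the identification of $R_B$ from Theorem \ref{thm:smooth} with a tensor–Hom swap via Proposition \ref{prop:tensor-with-ffd}. By Theorem \ref{thm:smooth} the DG-module $R_A \otimes^{\mrm{L}}_A \Omega_{B/A}$ carries a rigid dualizing structure over $B$ relative to $\K$, so by the uniqueness of rigid dualizing DG-modules (Corollary \ref{cor:group}) there is an isomorphism
\[
R_B \cong R_A \otimes^{\mrm{L}}_A \Omega_{B/A}
\]
in $\cat{D}(B)$. Unwinding the definition of $f^{!}$ and applying the usual extension-of-scalars/forgetful adjunction, I would obtain natural isomorphisms
\[
f^{!}(M) \;=\; \mrm{R}\opn{Hom}_B(B\otimes^{\mrm{L}}_A D_A(M),\, R_B) \;\cong\; \mrm{R}\opn{Hom}_A(D_A(M),\, R_A\otimes^{\mrm{L}}_A \Omega_{B/A})
\]
in $\cat{D}(B)$ for all $M \in \cat{D}^{+}_{\mrm{f}}(A)$, where the $B$-action on the right comes from the $B$-structure on $\Omega_{B/A}$.

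Next I would verify that $\Omega_{B/A}$, viewed in $\cat{D}(A)$ via the forgetful functor, satisfies the hypotheses of Proposition \ref{prop:tensor-with-ffd} as the argument $K$. Boundedness is immediate because every tilting DG-module is perfect by \cite[Theorem 6.5]{Ye1}. For finiteness of flat dimension relative to $\cat{D}(A)$, I would use that $\Omega_{B/A}$ has finite flat dimension relative to $\cat{D}(B)$ by \cite[Theorem 5.11]{Ye1}, together with the flat-dimension-$0$ assumption on $f$: for any $N \in \cat{D}(A)$ the associativity isomorphism
\[
N\otimes^{\mrm{L}}_A \Omega_{B/A} \cong (N\otimes^{\mrm{L}}_A B)\otimes^{\mrm{L}}_B \Omega_{B/A}
\]
lets one bound the cohomological amplitude of the left-hand side by that of $N$. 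Since $M \in \cat{D}^{+}_{\mrm{f}}(A)$ and $R_A$ is dualizing we also have $D_A(M) \in \cat{D}^{-}_{\mrm{f}}(A)$, and $R_A \in \cat{D}^{\mrm{b}}(A)$, so all hypotheses are in place.

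Applying Proposition \ref{prop:tensor-with-ffd} and then the double duality isomorphism $D_A \circ D_A \cong 1$ on $\cat{D}^{+}_{\mrm{f}}(A)$, the computation continues as
\[
\mrm{R}\opn{Hom}_A(D_A(M),\, R_A\otimes^{\mrm{L}}_A \Omega_{B/A}) \;\cong\; \mrm{R}\opn{Hom}_A(D_A(M), R_A) \otimes^{\mrm{L}}_A \Omega_{B/A} \;\cong\; M \otimes^{\mrm{L}}_A \Omega_{B/A},
\]
which completes the chain, naturally in $M$. The main technical point I expect to grind on is the $B$-linearity of the natural map furnished by Proposition \ref{prop:tensor-with-ffd}: the statement there produces an isomorphism in $\cat{D}(A)$, but I need one in $\cat{D}(B)$. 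This should follow from a direct check that the canonical $A$-linear morphism underlying the proposition intertwines the two $B$-actions induced by the fixed $B$-structure on $\Omega_{B/A}$, since the comparison map is built functorially in $K$ from the $A$-multiplication on $\Omega_{B/A}$, which is the restriction of its $B$-multiplication.
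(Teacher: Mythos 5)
Your argument follows the paper's strategy almost line for line: you identify $R_B\cong R_A\otimes^{\mrm{L}}_A\Omega_{B/A}$ via Theorem~\ref{thm:smooth} and uniqueness, rewrite $f^{!}(M)$ via adjunction as $\mrm{R}\opn{Hom}_A(D_A(M),R_A\otimes^{\mrm{L}}_A\Omega_{B/A})$, pull $\Omega_{B/A}$ out using Proposition~\ref{prop:tensor-with-ffd}, and finish with biduality. The one ingredient you omit --- and correctly flag as the point that would require work --- is the paper's preliminary reduction to the case where $f$ is K-flat, achieved by factoring $f$ as $A\xrightarrow{\til{f}}\til{B}\iso B$ with $\til{f}$ K-flat, handling the quasi-isomorphism separately via Corollary~\ref{cor:twist-qis} and Corollary~\ref{cor:omega-qis}. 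This reduction is not cosmetic; it is exactly what makes the $B$-linearity of the Proposition~\ref{prop:tensor-with-ffd} comparison map go through.

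Here is why your handwave does not close the gap. Proposition~\ref{prop:tensor-with-ffd} yields the morphism $\eta$ by replacing $K$ with a K-flat resolution \emph{over $A$}. If you apply it to $K=\opn{For}_f(\Omega_{B/A})$, the resolution that gets inserted at the chain level is only a DG $A$-module, so the resulting quasi-isomorphism lives in $\cat{D}(A)$ and carries no visible $B$-structure. To obtain a $B$-linear comparison, one wants to choose a K-flat resolution $Q\iso\Omega_{B/A}$ \emph{over $B$}, so that both $\opn{Hom}_A(P,R_A)\otimes_A Q$ and $\opn{Hom}_A(P,R_A\otimes_A Q)$ are complexes of $B$-modules and the evident map between them is manifestly $B$-linear; but for $\opn{Hom}_A(P,R_A)\otimes_A Q$ to compute the \emph{derived} tensor one then needs $Q$ to be K-flat over $A$, and for $R_A\otimes_A Q$ to compute $R_A\otimes^{\mrm{L}}_A\Omega_{B/A}$ the same is needed. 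A $B$-K-flat complex is automatically $A$-K-flat only when $A\to B$ is K-flat, which fails for a general cohomologically essentially smooth map. So ``the $A$-multiplication is the restriction of the $B$-multiplication'' does not produce the $B$-linear isomorphism; the objects being compared simply do not admit compatible chain-level models until one reduces to the K-flat case. The rest of your argument (the amplitude bound showing $\Omega_{B/A}$ has finite flat dimension relative to $\cat{D}(A)$, the boundedness checks for the hypotheses of the proposition, the biduality step) is correct and coincides with the paper's.
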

\begin{proof}
Let $A \xrightarrow{\til{f}} \til{B} \iso B$ be a K-flat resolution of $f$,
and denote the map $\til{B} \iso B$ by $b$.
By pseudofunctoriality, we have that $f^{!} = (b\circ \til{f})^{!} \cong b^{!} \circ \til{f}^{!}$.
By Corollary \ref{cor:twist-qis}, 
we have that
\[
b^{!} \circ \til{f}^{!} (M) \cong B\otimes^{\mrm{L}}_{\til{B}} \til{f}^{!} (M),
\]
and by Corollary \ref{cor:omega-qis}, 
we have that
\[
B\otimes^{\mrm{L}}_{\til{B}} \Omega_{\til{B}/A} \cong \Omega_{B/A},
\]
so we see that it is enough to prove the theorem in the case where $\til{B} = B$ is K-flat over $A$.
Using Theorem \ref{thm:smooth}, 
we have a functorial isomorphism
\begin{eqnarray}
f^{!}(M) = \mrm{R}\opn{Hom}_B(B\otimes^{\mrm{L}}_A\mrm{R}\opn{Hom}_A(M,R_A),R_B) \cong\nonumber\\
\mrm{R}\opn{Hom}_B(B\otimes^{\mrm{L}}_A\mrm{R}\opn{Hom}_A(M,R_A),R_A\otimes^{\mrm{L}}_A \Omega_{B/A})\cong\nonumber\\
\mrm{R}\opn{Hom}_A(\mrm{R}\opn{Hom}_A(M,R_A),R_A\otimes^{\mrm{L}}_A \Omega_{B/A}).\nonumber
\end{eqnarray}
Let $P \iso \mrm{R}\opn{Hom}_A(M,R_A)$ be a K-projective resolution over $A$,
and let $Q \iso \Omega_{B/A}$ be a K-flat resolution over $B$.
Since $A \to B$ is K-flat, 
$Q$ is also K-flat over $A$, 
and we have that
\[
\mrm{R}\opn{Hom}_A(\mrm{R}\opn{Hom}_A(M,R_A),R_A\otimes^{\mrm{L}}_A \Omega_{B/A}) \cong
\opn{Hom}_A(P,R_A\otimes_A Q).
\]
Since $Q$ is tilting over $B$,
it has finite flat dimension relative to $\cat{D}(B)$,
and since $A \to B$ is cohomologically smooth, 
we deduce that $Q$ has finite flat dimension relative to $\cat{D}(A)$.
Hence, by Proposition \ref{prop:tensor-with-ffd},
the $B$-linear map
\[
\opn{Hom}_A(P,R_A)\otimes_A Q \to \opn{Hom}_A(P,R_A\otimes_A Q)
\]
is a quasi-isomorphism.
Thus, we deduce that
\[
f^{!}(M) \cong \mrm{R}\opn{Hom}_A(\mrm{R}\opn{Hom}_A(M,R_A),R_A)\otimes^{\mrm{L}}_A \Omega_{B/A},
\]
so the result follows from the fact that $R_A$ is a dualizing DG-module.
\end{proof}

Recall that for a map $f:A \to B$ of DG-rings, 
we denote by $\mrm{L}f^*(-)$ the functor $-\otimes^{\mrm{L}}_A B$.
We now arrive to the last main result of this paper: derived base change for perfect maps.

\begin{thm}\label{thm:base-change}
Let $\K$ be a Gorenstein noetherian ring of finite Krull dimension,
let $f:A \to B$ be an arbitrary map in $\thecat$, 
and let $g:A \to C$ be a K-flat map in $\thecat$ 
such that $C$ has finite flat dimension relative to $\cat{D}(A)$.
Consider the induced base change commutative diagram
\[
\xymatrixcolsep{4pc}
\xymatrix{
A \ar[r]^f \ar[d]^g & B\ar[d]^{h}\\
C \ar[r]^{f'} & B\otimes_A C
}
\]
Then there is an isomorphism
\[
\mrm{L}h^* \circ f^{!}(-) \cong (f')^{!} \circ \mrm{L}g^{*}(-)
\]
of functors
\[
\cat{D}^{+}_{\mrm{f}}(A) \to \cat{D}^{+}_{\mrm{f}}(B\otimes_A C).
\]
\end{thm}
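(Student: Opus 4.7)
The plan is to deduce the statement from a base change identity for the rigid dualizing DG-module, namely
\[
(f')^{!}(R_A\otimes^{\mrm{L}}_A C) \cong R_B\otimes^{\mrm{L}}_A C
\]
in $\cat{D}(B\otimes_A C)$, which is the special case of the theorem at $M=R_A$. Indeed, unpacking the definition $f^{!}(-)=D_B(B\otimes^{\mrm{L}}_A D_A(-))$ on both sides and repeatedly invoking Proposition \ref{prop:tensor-with-ffd} (which is legal because $g$ is K-flat with $C$ of finite flat dimension over $A$, and hence the same holds for $h$ after base change), together with the tensor--Hom adjunctions along $g$ and $h$, one obtains the natural identifications
\begin{align*}
\mrm{L}h^{*}f^{!}(M) &\cong \mrm{R}\opn{Hom}_C\bigl(\mrm{R}\opn{Hom}_C(M\otimes^{\mrm{L}}_A C,\,R_A\otimes^{\mrm{L}}_A C),\ R_B\otimes^{\mrm{L}}_A C\bigr),\\
(f')^{!}\mrm{L}g^{*}(M) &\cong \mrm{R}\opn{Hom}_C\bigl(\mrm{R}\opn{Hom}_C(M\otimes^{\mrm{L}}_A C,\,R_C),\ R_{B\otimes_A C}\bigr).
\end{align*}
Consequently, once the base change identity above is known, substituting the expression for $R_{B\otimes_A C}$ and collapsing the resulting iterated $\mrm{R}\opn{Hom}_C$ via adjunction reduces the two right-hand sides to the same expression, yielding the theorem.

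To establish the base change for the rigid dualizing, I would show directly that $R_B\otimes^{\mrm{L}}_A C$ carries a natural rigidifying isomorphism over $B\otimes_A C$ relative to $\K$, and then appeal to uniqueness of the rigid dualizing from Corollary \ref{cor:group}. Such a rigidifying isomorphism is constructed by transporting the given rigidification $\rho : R_B \iso \mrm{R}\opn{Hom}_{B\otimes^{\mrm{L}}_{\K} B}(B,\,R_B\otimes^{\mrm{L}}_{\K} R_B)$ across the base change along $g$. The key tool here is the Künneth splitting of Lemma \ref{lem:tensor-split}, applied so as to identify
\[
\mrm{R}\opn{Hom}_{(B\otimes_A C)\otimes^{\mrm{L}}_{\K}(B\otimes_A C)}\bigl(B\otimes_A C,\,(R_B\otimes^{\mrm{L}}_A C)\otimes^{\mrm{L}}_{\K}(R_B\otimes^{\mrm{L}}_A C)\bigr)
\]
with $\mrm{R}\opn{Hom}_{B\otimes^{\mrm{L}}_{\K} B}(B,\,R_B\otimes^{\mrm{L}}_{\K} R_B)\otimes^{\mrm{L}}_A C$, which equals $R_B\otimes^{\mrm{L}}_A C$ by rigidity of $R_B$. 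To make all of these identifications precise, I would first replace $f$ and $g$ by an appropriate zig-zag of resolutions, in the spirit of Proposition \ref{prop:res} and Lemma \ref{lem:var-res}, so that the requisite K-flatness and K-projectivity hypotheses are in force, and then invoke Proposition \ref{prop:transfer} to transport the conclusion back to the original DG-rings.

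The main obstacle is the verification of this base change for the derived Hochschild cohomology: the Künneth splitting must be applied carefully, so as to respect the diagonal contribution of $B\otimes_A C$ in the two-sided bar-like object $(B\otimes_A C)\otimes^{\mrm{L}}_{\K}(B\otimes_A C)$, and the rigidification that this procedure installs on $R_B\otimes^{\mrm{L}}_A C$ must be shown to coincide with the one naturally carried by $(f')^{!}(R_A\otimes^{\mrm{L}}_A C)$ through the pseudofunctoriality of $(-)^{!}$ along the Cartesian square (Proposition \ref{prop:pseudofunctor}). Once this matching of rigidifications is in place, Corollary \ref{cor:group} identifies $R_B\otimes^{\mrm{L}}_A C$ with $R_{B\otimes_A C}$ in $\cat{D}(B\otimes_A C)$, and the general base change theorem then follows from the substitution outlined in the first paragraph.
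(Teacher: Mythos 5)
Your proposal has a genuine gap that makes the strategy unworkable. The pivot of your argument is the claim that $R_B\otimes^{\mrm{L}}_A C$ carries a rigidifying isomorphism over $B\otimes_A C$ relative to $\K$, after which you want to invoke Corollary \ref{cor:group} to identify it with $R_{B\otimes_A C}$. But Corollary \ref{cor:group} gives uniqueness only for \emph{rigid dualizing} DG-modules, and $R_B\otimes^{\mrm{L}}_A C$ need not be dualizing over $B\otimes_A C$. Indeed, the hypothesis on $g$ is only that it is K-flat with $C$ of finite flat dimension relative to $\cat{D}(A)$; this does \emph{not} make $g$ (or $h$) Gorenstein. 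Concretely, take $\K = A = B$ a field and $C$ a finite-dimensional non-Gorenstein $\K$-algebra (say $\K[x,y]/(x^2,xy,y^2)$); then $g$ is K-flat of finite flat dimension, yet $\mrm{L}h^*(R_B) = R_B\otimes^{\mrm{L}}_A C = C$, which has infinite injective dimension over $C = B\otimes_A C$ and so is not dualizing. Rigid DG-modules that are not dualizing are far from unique (e.g.\ $0$ is rigid), so the uniqueness argument collapses. For the same reason the ``reduction to $M=R_A$'' cannot be carried out as you describe: identifying the two displayed expressions requires $\mrm{L}g^*(R_A) = R_A\otimes^{\mrm{L}}_A C$ to be dualizing over $C$ (so that $D_C$ of it is a tilting object you can ``collapse'' across the outer $\mrm{R}\opn{Hom}_C$), and this fails in the same examples.

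There is also a more local difficulty in your proposed use of Lemma \ref{lem:tensor-split}: the object $(B\otimes_A C)\otimes^{\mrm{L}}_{\K}(B\otimes_A C)$ is not of the external-product shape to which that lemma applies; it is rather $(B\otimes^{\mrm{L}}_{\K} B)\otimes^{\mrm{L}}_{A\otimes^{\mrm{L}}_{\K} A}(C\otimes^{\mrm{L}}_{\K} C)$, so relating its derived Hochschild cohomology to that of $B$ over $\K$ requires a genuinely different and more delicate argument than a single K\"unneth splitting.

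The paper's proof sidesteps all of this by not attempting a base change identity at the level of rigid dualizing DG-modules. Instead, after replacing $B$ by a quasi-isomorphic DG-ring, it factors $f$ as a composite of cohomologically essentially smooth and cohomologically finite (K-projective) maps and proves base change for each factor directly, using the explicit descriptions $f^{!}(M) \cong M\otimes^{\mrm{L}}_A\Omega_{B/A}$ (Theorem \ref{theorem:smooth}, together with Proposition \ref{prop:smooth-base-change}) and $f^{!}(M) \cong \mrm{R}\opn{Hom}_A(B,M)$ (Theorem \ref{theorem:finite}, together with Proposition \ref{prop:tensor-with-ffd}). Those formulas make no appeal to $\mrm{L}g^*R_A$ or $\mrm{L}h^*R_B$ being dualizing, which is exactly what is needed since those hypotheses fail.
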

\begin{proof}
Step 1: assume first $f$ is cohomologically essentially smooth.
By Proposition \ref{prop:smooth-base-change},
$f'$ is also cohomologically essentially smooth,
and there is an isomorphism
\[
\Omega_{B\otimes_A C/C} \cong \Omega_{B/A}\otimes^{\mrm{L}}_A C.
\]
Given $M \in \cat{D}^{+}_{\mrm{f}}(A)$,
by Theorem \ref{theorem:smooth}, 
we have functorial isomorphisms
\begin{eqnarray}
\mrm{L}h^* \circ f^{!}(M) \cong (B\otimes_A C)\otimes^{\mrm{L}}_B (M\otimes^{\mrm{L}}_A \Omega_{B/A})\cong \nonumber\\
(M\otimes^{\mrm{L}}_A \Omega_{B/A})\otimes_A C \cong M\otimes^{\mrm{L}}_A \Omega_{B\otimes_A C/C}\cong\nonumber\\
(M\otimes^{\mrm{L}}_A C)\otimes^{\mrm{L}}_C \Omega_{B\otimes_A C/C} = (f')^{!}\mrm{L}g^*(M).\nonumber
\end{eqnarray}
Step 2: now, let us assume that $f$ is cohomologically finite,
and assume in addition that $f$ is K-projective. Then $f'$ is also K-projective and cohomologically finite.
Combining the K-projectivity assumptions with Theorem \ref{theorem:finite},
we have that
\begin{eqnarray}
\mrm{L}h^* f^{!}(M) \cong (B\otimes_A C)\otimes^{\mrm{L}}_B 
\mrm{R}\opn{Hom}_A(B,M) \cong \opn{Hom}_A(B,M)\otimes_A C\cong^{\diamondsuit}
\nonumber\\
\opn{Hom}_A(B,M\otimes_A C) \cong \opn{Hom}_C(B\otimes_A C,M\otimes_A C) \cong (f')^{!}\mrm{L}g^*(M).\nonumber
\end{eqnarray}
Here, the isomorphism $\diamondsuit$ follows from Proposition \ref{prop:tensor-with-ffd}.
\newline
Step 3: assume that $f:A \to B$ is a map in $\thecat$ that can be factored as
$A \xrightarrow{\varphi} D \xrightarrow{\psi} B$ where $\varphi$ is cohomologically essentially smooth,
and $\psi$ is cohomologically finite. Factor $\psi$ as $D \xrightarrow{\chi} \til{B} \xrightarrow{b} B$ where $b$ is a quasi-isomorphism and $\chi$ is K-projective. Then $\chi$ is also cohomologically finite. 
These factorizations fit into a commutative diagram in $\thecat$:
\[
\xymatrixcolsep{4pc}
\xymatrix{
A \ar[r]^{\varphi}\ar[d]^{g} & D\ar[r]^{\chi}\ar[d]^{g'} & \til{B}\ar[r]^{b}\ar[d]^{g''} & B\ar[d]^{h}\\
C \ar[r]^{\varphi'} & D\otimes_A C \ar[r]^{\chi'} & \til{B}\otimes_A C \ar[r]^{b'} & B\otimes_A C
}
\]
Notice that $b$, being a quasi-isomorphism,
is cohomologically essentially smooth.
Note also that by base change,
each of the morphisms $\varphi', \chi', b'$ is of finite flat dimension. 
Thus, step 1 and 2 apply to each of the squares in the diagram.
Hence, using pseudofunctoriality of $(-)^{!}$ and the previous steps, 
we have natural isomorphisms
\begin{eqnarray}
\mrm{L}h^* f^{!}(M) = \mrm{L}h^* (b\circ \chi \circ \varphi)^{!}(M) \cong 
\mrm{L}h^* b^{!} \chi^{!} \varphi^{!} (M) \cong\nonumber\\
(b')^{!}\mrm{L}(g'')^{*}\chi^{!} \varphi^{!} (M) \cong
(b')^{!}(\chi')^{!}\mrm{L}(g')^{*}\varphi^{!}(M) \cong\nonumber\\
(b')^{!}(\chi')^{!}(\varphi')^{!} \mrm{L}g^{*}(M) \cong (b'\circ \chi' \circ \varphi')^{!}(\mrm{L}g^{*}M) = (f')^{!}\mrm{L}g^{*}(M),\nonumber
\end{eqnarray}
and this completes this step.
\newline
Step 4: finally, let $f:A \to B$ be an arbitrary map in $\thecat$. 
Similarly to the proof of Lemma \ref{lem:hequiv-of-map},
let $U$ be the set of elements $u$ in $B^0$ such that $\pi_B(u) \in \bar{B}$ is a unit,
and let $\til{B} := U^{-1}B^0\otimes_{B^0} B$. 
The induced map $b:B \to \til{B}$ is a quasi-isomorphism.
Consider the map $\til{f} := b\circ f:A \to \til{B}$.
As the map $A^0 \to \bar{B}$ is essentially of finite type,
and because every preimage of a unit in $\bar{B}$ is a unit in $\til{B}^0$,
we see that $\til{f}$ may be factored as
\[
A \xrightarrow{\varphi} V^{-1}A^{0}[t_1,\dots,t_n]\otimes_{A^0} A \xrightarrow{\psi} \til{B},
\]
where $\varphi$ is cohomologically essentially smooth, 
and $\psi$ is cohomologically finite. 
Thus, step 3 applies to $\til{f}$.
Consider the commutative diagram
\[
\xymatrixcolsep{4pc}
\xymatrix{
A\ar[r]^{f}\ar[d]^{g} & B\ar[r]^{b}\ar[d]^{h} & \til{B}\ar[d]^{\til{h}}\\
C\ar[r]^{f'} & B\otimes_A C\ar[r]^{b'} & \til{B}\otimes_A C
}
\]
Set $\til{f'} := b'\circ f'$.
Then we have shown that there is a functorial isomorphism
\begin{equation}\label{eqn:base-change-before-forget}
\mrm{L}\til{h}^{*}(\til{f})^{!}(M) \cong (\til{f}')^{!}\mrm{L}g^*(M)
\end{equation}
in $\cat{D}(\til{B}\otimes_A C)$. 
Applying the forgetful functor $\opn{For}_{b'}$ to the left hand side of (\ref{eqn:base-change-before-forget}),
we obtain
\[
\opn{For}_{b'} ( \mrm{L}\til{h}^{*}(\til{f})^{!}(M) ) \cong
\opn{For}_{b'} ( (f^{!}(M)\otimes^{\mrm{L}}_B \til{B})\otimes_A C) \cong
f^{!}(M)\otimes_A C \cong \mrm{L}h^{*}f^{!}(M),
\]
while applying $\opn{For}_{b'}$ to the right hand side of (\ref{eqn:base-change-before-forget}), we get
\[
\opn{For}_{b'} ((\til{f}')^{!}\mrm{L}g^*(M)) \cong \opn{For}_{b'}((b')^{!}(f')^{!}\mrm{L}g^*(M)) \cong (f')^{!}\mrm{L}g^*(M).
\]
Hence, we deduce that for any $f:A \to B$ in $\thecat$, 
and for any $M \in \cat{D}^{+}_{\mrm{f}}(A)$,
there is a functorial isomorphism
\[
\mrm{L}h^* \circ f^{!}(M) \cong (f')^{!} \circ \mrm{L}g^{*}(M).
\]
\end{proof}

\begin{rem}
While the results of this paper were developed only in an affine derived setting, it is worth mentioning that  at least over ordinary schemes, by \cite[Theorem 3.2.9]{AIL2} rigid dualizing complexes can be glued under the flat topology. We expect that a similar gluing result will hold in the above derived setting, so that it would be possible to globalize the results of this paper to derived schemes and derived stacks.
\end{rem}

\textbf{Acknowledgments.}
The author would like to thank Pieter Belmans, Joseph Lipman and Amnon Yekutieli for some helpful discussions.

\end{document}